\newtheorem{dfn} [subsection]{Definition}
\newtheorem{obs} [subsection]{Remark}
\newtheorem{prop}[subsection]{Proposition}
\newtheorem{teor}[subsection]{Theorem}
\newtheorem{lema}[subsection]{Lemma}
\newtheorem{cor} [subsection]{Corollary}
\newcommand{\AAA}{\mathcal A}
\newcommand{\FF}{\mathcal F}
\newcommand{\GG}{\mathcal G}
\newcommand{\HH}{\mathcal H}
\newcommand{\KK}{\mathcal K}
\newcommand{\PP}{\mathcal P}
\newcommand{\Coverage}{\mathbf{Coverage}}
\newcommand{\Covering}{\mathbf{Covering}}
\newcommand{\fCovering}{\mathbf{f-Covering}}
\newcommand{\gCovering}{\mathbf{g-Covering}}
\newcommand{\Partition}{\mathbf{Partition}}
\newcommand{\Seth}{\mathbf{Set}}
\newcommand{\Fu}{\mathbf F}
\newcommand{\Gu}{\mathbf G}
\def\Imm{\operatorname{Im}}
\def\uu{\operatorname{u}}
\def\Gg{\operatorname{g}}
\def\id{\operatorname{id}}
\def\Id{\operatorname{id}}
\def\Ob{\operatorname{Ob}}
\def\Hom{\operatorname{Hom}}
\numberwithin{equation}{section}
\title{Geometrical isomorphisms between categories of fuzzy coverings and fuzzy partitions}
\author{Mircea Cimpoea\c s$^1$ and Adrian Gabriel Neac\c su$^2$}
\date{}
\begin{document}

\maketitle
\footnotetext[1]{Faculty of Applied Sciences, University Politehnica of Bucharest, 060042, Romania and \\ 
                 "Simion Stoilow" Institute of Mathematics, Romania \\ e-mail: {\tt mircea.cimpoeas@upb.ro}}
\footnotetext[2]{Faculty of Applied Sciences, University Politehnica of Bucharest, 060042, Romania \\ e-mail: {\tt neacsu.adrian.gabriel@gmail.com}}

\begin{abstract}
Let $\Covering$ be the category of the category of fuzzy coverings,
and $\Partition$, the category of fuzzy partitions. 
We geometrically construct an isomorphism of categories between $\Partition$ and a full subcategory of $\Covering$,
which can be used to derive bijections between fuzzy partitions and fuzzy coverings with finitely many sets.
Also, we establish an isomorphism between $\Covering[n]$, the category of coverings with $n$ fuzzy sets, 
and a subcategory of $\Partition$, whose objects are partitions with $n$ sets which satisfy certain conditions.

\textbf{Keywords:} fuzzy sets; category theory; fuzzy covering; fuzzy partition.

\textbf{MSC2020:} 03E72; 18B05.
\end{abstract}

\setcounter{section}{0}
\section{Introduction}

Lotfi Zadeh \cite{zadeh} introduced the fuzzy sets in 1965. Fuzzy sets are generalizations of sets. The statement 
that an element is in a fuzzy set can be not only false nor true, but
also something in between. Fuzzy sets play a major role in
many topics in mathematics such as fuzzy topology, fuzzy algebra, fuzzy graph
theory and fuzzy analysis. For a comprehensive introduction we refer the reader to \cite{ne}, \cite{ng}.

A fuzzy \emph{covering} of a set $X$ is a collection of fuzzy sets $(A_i)_{i\in I}$, where $I$ is an arbitrary index set, with the property that
for every element $x\in X$ there exists $i\in I$ such that $A_i(x)=1$, see \cite[Definition 1]{deer}. Note that, for infinite coverings, this definition guarantees for any 
$x\in X$ the existence of a fuzzy set $A_i$ to which $x$ fully belongs. The study of fuzzy coverings, related to fuzzy rough set models, provided good tools for machine 
learning algorithms such as feature and instance selection, see \cite{qz} and \cite{deer2}.

A fuzzy \emph{partition} of a set $X$ is a locally finite family of fuzzy sets $(B_i)_{i\in I}$ such that $\sum_{i\in I}B_i(x)=1$ for all $x\in X$, see \cite[Definition 2]{baet}. 
Of course, when the sets $B_i$ are crisp, we reobtain the well known definiton of partition, from the set theory. Several variants and generalization of the notion of fuzzy partition 
were introduced in literature. We reffer the reader to \cite{aaa} for further details on this subject. If the index set $I$ is finite, we note that a fuzzy partition resemble to a 
stochastic process on a finite state space, see Remark \ref{stoc}.

The use of category theory, in order to study the relations between fuzzy coverings and fuzzy partitions, is a recent approach which
was firstly tackled in \cite{N} and later continued in \cite{N2} by the same author. In \cite[Definition 3.1]{N}, it was introduced the category of fuzzy coverings, denoted $\Covering$, 
whose objects are fuzzy coverings $(X,(A_i)_{i\in I})$ and whose morphisms between two objects $(X,(A_i)_{i\in I})$ and $(Y,(A'_j)_{j\in J})$ are pair of functions $(f,\rho):X\times I\to Y\times J$
such that $A_i(x)\leq A'_{\rho(i)}(f(x))$ for all $x\in X$ and $i\in I$. In \cite[Theorem 4.1]{N}, an isomorphism between $\fCovering$, the category of
fuzzy coverings with finite number of fuzzy sets and a category of fuzzy partitions was provided. However, that isomorphism is not very natural in a sense which will be
explained later on. For the basic prerequisites in category theory we refer the reader to \cite{MacL}.

We introduce the category $\Partition$, see Definition \ref{covdef}, whose objects are fuzzy partitions with finitely many sets, and whose morphisms between two objects 
$(X,(B_i)_{i\in I})$ and $(Y,(B'_j)_{j\in J})$ are pair of functions $(f,\rho):X\times I\to Y\times J$
such that $B_{i}(x) - \bigvee_{i\in I} B_i(x) \leq B'_{\rho(i)}(f(x)) - \bigvee_{j\in J} B'_j (f(x))$ for all $x\in X$ and $i\in I$,
which is a modified version of the category $\Coverage$ from \cite{N}. We explain why this definition is better in Remark \ref{gigi}.

The aim of this paper is to continue this line of research and to find other connections between several subcategories of fuzzy coverings and fuzzy partitions, both with finite number of sets.
We let $\HH=[0,1]^n,\; \FF=\bigcup_{i=1}^n \HH\cap\{x_i=1\}\text{ and }\KK=\HH\cap \{x_1+\cdots+x_n=1\}$.
The central idea of this paper is to interpret the fuzzy coverings with $n$ sets as maps $A:X\to\FF$ and the fuzzy partitions  
with $n$ sets as maps $B:X\to\KK$. In order to establish isomorphisms between subcategories of $\Covering$ and $\Partition$ we will use several results of convex geometry which relates $\FF$ to $\KK$. For a friendly introduction on this topic we refer the reader to \cite{hug}.

The outline of the article is as follows. In Section 2, we recall some basic definitions regarding 
fuzzy sets, fuzzy coverings and fuzzy partitions. Also, we recall the definition of the
$\Covering$ category from \cite{N} and we introduce $\Partition$, the category of fuzzy partitions with finitely many sets.

In Section 3, using the orthogonal projection of $\KK$ into $\FF$, we construct an isomorphism of 
categories between $\Partition$ and a subcategory of finite coverings, see Theorem \ref{t1}. Using this isomorphism,
in Section 4 and Section 5 we establish two bijections between fuzzy partitions and fuzzy coverings with $n$ sets, 
see Theorem \ref{kungfu1} and Theorem \ref{t3}.

In Section 6, we explore other connections between fuzzy coverings and partitions with a fixed number of sets $n$. In Theorem \ref{t2}
we construct a isomorphism of categories between $\Covering[n]$, the category of fuzzy coverings witn $n$ sets, and a subcategory of
partitions with $n$ sets. Consequently, in Section 7, we obtain in Theorem \ref{t222} a new bijection 
between the partitions and the coverings with $n$ sets of a set $X$. In Section 8 and Section 9, we particularize our results 
for $n=2$ and $n=3$.

\section{Preliminaries}

Let $X$ be a nonempty set.

\begin{dfn}
We say that $A$ is a \emph{fuzzy set}, or a \emph{fuzzy subset of $X$}, if $A:X\rightarrow[0,1]$ is a function.
$A(x)$ is the membership degree to which $x$ belongs to $A$.
\end{dfn} 

\begin{dfn}(\cite[Definition 1]{deer})
We say that $(X,(A_{i})_{i \in I})$ is a \emph{fuzzy covering}, or, simply, a \emph{covering} of $X$, 
if $A_{i}:X\rightarrow \left[0,1 \right]$ are fuzzy sets such that for any
$x \in X$, there exists $i \in I$ with $A_{i}(x)=1$. 

In this case, we can also say that $X$ is \emph{covered} by the fuzzy sets $A_{i}$, $i \in I$.
\end{dfn}

\begin{dfn}(\cite[Definition 3.1]{N})
Let $\Covering$ be the category which has:
\begin{enumerate}
\item[(1)]  
$\Ob(\Covering)= \left \{  \left( X,\left( A_{i} \right)_{i \in I} \right)\;:\;\left( X,\left(A_{i} \right)_{i \in I} \right) \text{ is a fuzzy covering} \right \}.$
\item[(2)] $ \Hom\left( \left(X,\left(A_{i}\right)_{i \in I} \right), \left(Y,\left(A'_{j}\right)_{j \in J} \right) \right) = 
            \{ (f,\rho)\;:\;f:X \rightarrow Y, \rho:I \rightarrow J$, \\ 
           $\text{ such that }A_{i}(x)\leq A'_{\rho(i)}(f(x)),$ for all $x \in X$ and $i \in I \}$.
\item[(3)] $(g,\theta) \circ (f,\rho)=(g \circ f, \theta \circ \rho)\in \Hom\left( \left(X,\left(A_{i}\right)_{i \in I} \right), \left(Z,\left(A''_{k}\right)_{k \in K} \right)\right)$, \\
$ \text{ for all }(f,\rho) \in \Hom\left( \left(X,\left(A_{i}\right)_{i \in I} \right), \left(Y,\left(A'_{j}\right)_{j \in J} \right)\right)\text{ and }$ \\
$ (g,\theta) \in \Hom\left( \left(Y,\left(A'_{j}\right)_{j \in J} \right), \left(Z,\left(A''_{k}\right)_{k \in K} \right)\right).$
\item[(4)] The identity morphism, $\id_{\left(X,\left(A_{i}\right)_{i \in I} \right)} =\left( \id_{X}, \id_{I} \right)$, for all $\left(X,\left(A_{i}\right)_{i \in I} \right)\in  \Ob(\Covering).$
\end{enumerate}
\end{dfn}

\begin{dfn}(\cite[Definition 4.3]{N})
We define the category $\fCovering$ as the full subcategory of $\Covering$
which has a finite number of fuzzy sets in their coverings.
\end{dfn}

\begin{dfn}(see \cite[Definition 2]{baet})
Let $I$ be a set of indices. We say that $(X,(B_{i})_{i \in I})$ is a \emph{fuzzy partition}, or, simply, 
a \emph{partition} of $X$, if:
\begin{enumerate}
\item[(1)] $(B_{i})_{i\in I}$ is a locally finite family of fuzzy sets, i.e. for any $x\in X$, the index set $I_x:=\{i\in I\;:\;B_i(x)>0\}$ is finite.
\item[(2)] $\sum_{i\in I}B_i(x)=1$ for all $x\in X$.
\end{enumerate}
\end{dfn}

Note that the condition (1) is automatically fulfilled if the index set $I$ is finite.
We denote $[n]:=\{1,2,\ldots,n\}$, where $n$ is a positive integer.

\begin{obs}\label{stoc}\rm
The first definition of a fuzzy partition with finitely many sets was given by Ruspini in \cite{rusp}, 
in terms of probability theory. We recall that a \emph{stochastic process} is a collection $P = (P_x)_{x\in X}$ of random
variables, all taking values in the same set $S$ (called the state space); see for instance \cite{doob}. 

Assume that $S=[n]$. For each $x\in X$, we have $P_x:\begin{pmatrix} 1 & 2 & \cdots & n \\ p_x(1) & p_x(2) & \cdots & p_x(n) \end{pmatrix}$,
where $p_x(k)=P(P_x=k)\geq 0$ and $p_x(1)+\cdots +p_x(n)=1$. This shows that we can interpret $P$ as a partition of the set $X$ with $n$ subsets
, i.e. $P=(X,(B_i(x))_{i\in [n]})$ where $B_i(x)=p_x(i)$ for all $i\in [n]$ and $x\in X$.
\end{obs}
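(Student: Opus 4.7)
The substantive content of this remark is the claim that a stochastic process $P=(P_x)_{x\in X}$ with finite state space $S=[n]$ and pointwise mass function $p_x(k):=P(P_x=k)$ gives rise to a fuzzy partition $(X,(B_i)_{i\in[n]})$ via $B_i(x):=p_x(i)$. My plan is to verify this by checking the two defining axioms of a fuzzy partition directly against the two defining properties of a discrete probability distribution, and then to note that the correspondence is in fact reversible.

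First I would check that each $B_i$ is a genuine fuzzy subset of $X$, that is, $B_i(x)\in[0,1]$ for every $x$. This is immediate from $p_x(i)\ge 0$ together with the normalization $p_x(1)+\cdots+p_x(n)=1$, which forces $p_x(i)\le 1$. Condition (1) in the definition of a fuzzy partition (local finiteness of the family) is then automatic: as the paper itself observes right after introducing the definition, local finiteness holds whenever the index set is finite, and here the index set is $[n]$. Condition (2), namely $\sum_{i\in[n]}B_i(x)=1$ for every $x$, is precisely the restatement of $p_x(1)+\cdots+p_x(n)=1$.

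The very same observations, read in reverse, show that any fuzzy partition of $X$ indexed by $[n]$ produces a stochastic process by setting $P(P_x=i):=B_i(x)$, so the two notions are in bijection when the state space is $[n]$. Because the argument is a direct substitution of definitions, there is no genuine obstacle; the only point that demands a bit of care is maintaining the dictionary between the matrix-style notation $\begin{pmatrix}1&\cdots&n\\p_x(1)&\cdots&p_x(n)\end{pmatrix}$ for $P_x$ and the family-of-functions notation $(B_i)_{i\in[n]}$ for the partition, and the verification above is precisely that dictionary made explicit.
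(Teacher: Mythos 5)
Your verification is correct and matches the paper's intent exactly: the remark is a definitional observation, and your check that $p_x(i)\geq 0$ with $\sum_{i}p_x(i)=1$ gives fuzzy sets summing to $1$ (with local finiteness automatic for the finite index set $[n]$) is precisely the dictionary the paper relies on. Nothing further is needed.
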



We introduce the following category, which is a modified version of the $\Coverage$ category introduced in \cite{N}:

\begin{dfn}(compare with \cite[Definition 4.2]{N})\label{covdef}
Let $\Partition$ be the category which has:
\begin{enumerate}
\item[(1)] $\Ob(\Partition)= \left \{  \left( X,\left( B_{i} \right)_{i \in I} \right)\;:\; \left( X,\left(B_{i} \right)_{i \in I} \right) 
            \text{ is a fuzzy partition} \right \}$, where $I$ is a finite set of indices.
\item[(2)] $ \Hom\left( \left(X,\left(B_{i}\right)_{i \in I} \right), \left(Y,\left(B'_{j}\right)_{j \in J} \right) \right) = 
        \{ (f,\rho)\;:\;f:X \rightarrow Y,\;\rho:I \rightarrow J$, such that \\
 $B_{i}(x) - \bigvee_{i\in I} B_i(x) \leq B'_{\rho(i)}(f(x)) - \bigvee_{j\in J} B'_j (f(x))$, for all $x \in X$ and $i \in I \}$.
\item[(3)] $(g,\theta) \circ (f,\rho)=(g \circ f, \theta \circ \rho)\in \Hom\left( \left(X,\left(B_{i}\right)_{i \in I} \right), \left(Z,\left(B''_{k}\right)_{k \in K} \right)\right)$, \\
$ \text{ for all }(f,\rho) \in \Hom\left( \left(X,\left(B_{i}\right)_{i \in I} \right), \left(Y,\left(B'_{j}\right)_{j \in J} \right)\right)\text{ and }$ \\
$ (g,\theta) \in \Hom\left( \left(Y,\left(B'_{j}\right)_{j \in J} \right), \left(Z,\left(B''_{k}\right)_{k \in K} \right)\right).$
\item[(4)] The identity morphism, $\id_{\left(X,\left(B_{i}\right)_{i \in I} \right)} =\left( \id_{X}, \id_{I} \right)$, for all $\left(X,\left(B_{i}\right)_{i \in I} \right)\in  \Ob(\Partition).$
\end{enumerate}
\end{dfn}

\begin{obs}\label{gigi}\rm
Note that if $(X,(A_i)_{i\in I})$ is a covering, then $\bigvee_{i\in I} A_i(x)=1$ for all $x\in X$. Hence, 
$(f,\rho):(X,(A_i)_{i\in I}) \to (Y,(A'_j)_{j\in J})$ is a morphism in $\Covering$ if and only if
$$A_i(x) - \bigvee_{i\in I} A_i(x) \leq A'_{\rho(i)}(f(x)) - \bigvee_{j\in J} A'_j(f(x))\text{ for all }i\in I,x\in X,$$
a condition similar to the condition for morphisms in $\Partition$, given in Definition \ref{covdef}.
\end{obs}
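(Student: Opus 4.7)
The plan is first to note the near-tautological identity $\bigvee_{i\in I} A_i(x) = 1$ for every $x \in X$ in any covering. This is immediate from the definition of covering: by hypothesis there exists $i \in I$ with $A_i(x) = 1$, and no $A_j(x)$ can exceed $1$, so the supremum equals $1$.

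Given this identity, the claimed equivalence reduces to an algebraic manipulation. Since both the source $(X,(A_i)_{i\in I})$ and the target $(Y,(A'_j)_{j\in J})$ are assumed to be coverings, both joins $\bigvee_{i\in I} A_i(x)$ and $\bigvee_{j\in J} A'_j(f(x))$ equal $1$. The morphism condition $A_i(x) \leq A'_{\rho(i)}(f(x))$ is therefore equivalent to
\[
A_i(x) - 1 \;\leq\; A'_{\rho(i)}(f(x)) - 1,
\]
and substituting the two joins for the two copies of $1$ on the left- and right-hand sides yields exactly the displayed inequality. Conversely, starting from the displayed inequality and using the same substitution in reverse recovers the original condition. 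No further verification is required since the equivalence is pointwise on each pair $(x,i)$.

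There is really no technical obstacle here; the only point to be careful about is that one must use the covering hypothesis on \emph{both} objects, not just one, so that both joins can be replaced by $1$. The content of the remark is thus conceptual rather than computational: it shows that the morphism condition in $\Covering$ can be rewritten in the same algebraic form as the one chosen in Definition \ref{covdef} for $\Partition$, explaining why this modified form is preferable to the one from \cite{N}. This unified presentation is what will support the geometric isomorphisms announced in the introduction, since it allows morphisms in both categories to be compared through the same inequality involving differences with the pointwise maximum.
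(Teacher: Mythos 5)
Your argument is correct and is essentially the same as the paper's: the remark rests on the single observation that $\bigvee_{i\in I}A_i(x)=1$ (and likewise $\bigvee_{j\in J}A'_j(f(x))=1$) for coverings, after which the displayed inequality is just the morphism condition with $1$ subtracted from both sides. Your explicit caution that the covering hypothesis must be used on both the source and the target matches the intended reading of the remark.
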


\section{An isomorphism between $\Partition$ and a subcategory of $\Covering$}

Let $n\geq 2$ be an integer. 
Let $\HH:=[0,1]^n\subset \mathbb R^n$ be a $n$-dimensional cube. We also consider:
$$\FF=\bigcup_{i=1}^n (\HH\cap\{x_i=1\})\text{ and }\KK=\HH\cap \{x_1+\cdots+x_n=1\}.$$
Note that the $(n-1)$-dimensional simplex $\KK$ is the convex hull of the points
$(1,0,\ldots,0)$, $(0,1,\ldots,0),\ldots,(0,\ldots,0,1)$.

\begin{prop}\label{p1}
With the above notations, the orthogonal projection of $\KK$ to $\FF$ is the map
$$\Phi:\KK \to \FF,\;\Phi(b)=b+(1-\bigvee_{i\in[n]}b_i)\uu,$$
where $\uu=(1,\ldots,1)$. Moreover, $\Phi$ is injective and the image of $\Phi$ is 
$$\GG:=\Imm(\Phi)=\{(a_1,\ldots,a_n) \in \FF \;: \; \sum_{i\in[n]} a_i \leq na_i+1,\;i\in [n] \}.$$
Also, the inverse of $\Phi$ (with the codomain restricted to $\GG$) is
$$\Phi^{-1}:\GG \to \KK,\;\Phi^{-1}(a)=a+\frac{1}{n}(1-\sum_{i\in[n]} a_i)\uu.$$
\end{prop}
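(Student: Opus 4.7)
The plan is to make the geometry explicit. The affine hyperplane $\{x_1+\cdots+x_n=1\}$ containing $\KK$ has unit normal proportional to $\uu/\sqrt{n}$, so the orthogonal projection from $\KK$ onto $\FF$ (interpreted as first-hit along the normal direction) amounts to finding, for each $b\in\KK$, the smallest $t\geq 0$ such that $b+t\uu\in\FF$. Since $\FF$ is the locus where at least one coordinate equals $1$, while staying in $\HH=[0,1]^n$ forces $b_i+t\leq 1$ for every $i$, the conditions $b_k+t=1$ for some $k$ and $b_i+t\leq 1$ for all $i$ jointly force $t=1-\bigvee_{i\in[n]} b_i$. This yields the formula $\Phi(b)=b+(1-\bigvee_{i\in[n]}b_i)\uu$, and membership $\Phi(b)\in\FF$ follows by picking any index attaining the maximum (note $t\geq 0$ holds automatically since $b_i\leq \sum_j b_j=1$).

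Injectivity will be immediate: if $\Phi(b)=\Phi(b')$, then $b-b'=s\uu$ for some $s\in\mathbb{R}$; summing coordinates and using $\sum_i b_i=\sum_i b'_i=1$ forces $ns=0$, hence $b=b'$.

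For the image, I would compute directly in both directions. If $a=\Phi(b)$ then $a_i=b_i+(1-\bigvee_j b_j)$ and $\sum_j a_j=1+n(1-\bigvee_j b_j)$, whence $\sum_j a_j - n a_i = 1 - n b_i \leq 1$, so $a\in\GG$. Conversely, for $a\in\GG$, define $b:=a+\tfrac{1}{n}(1-\sum_i a_i)\uu$. Summation yields $\sum_i b_i=1$; nonnegativity $b_i\geq 0$ is exactly the defining inequality $\sum_j a_j\leq na_i+1$ of $\GG$; and $b_i\leq 1$ follows from $\sum_j b_j=1$ together with $b_j\geq 0$. Hence $b\in\KK$. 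The one step requiring care, and the main (if mild) obstacle, is verifying $\Phi(b)=a$: since $b$ differs from $a$ by a constant multiple of $\uu$, the maxima are related by $\bigvee_j b_j=\bigvee_j a_j+\tfrac{1}{n}(1-\sum_i a_i)$, and here one crucially invokes the hypothesis $a\in\FF$ to conclude $\bigvee_j a_j=1$. Substituting this into $\Phi(b)=b+(1-\bigvee_j b_j)\uu$ makes the shifts cancel and recovers $a$. This simultaneously establishes $\GG\subseteq\Imm(\Phi)$ and gives the stated formula for $\Phi^{-1}$.
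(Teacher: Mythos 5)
Your proposal is correct and follows essentially the same route as the paper: project along the direction $\uu$ (the normal to $\Pi$), solve $b+t\uu\in\FF$ to get $t=1-\bigvee_{i\in[n]}b_i$, and recover $\Phi^{-1}$ by intersecting back with $\Pi$. The only differences are cosmetic improvements — your injectivity argument (the difference is a multiple of $\uu$, so summing coordinates kills it) is a bit slicker than the paper's index-chasing, and you make explicit the verification $\Phi(b)=a$ via $\bigvee_{j\in[n]}a_j=1$, a step the paper leaves implicit in its geometric phrasing.
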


\begin{proof}
We consider the hyperplane $\Pi=\{x_1+\cdots+x_n=1\}$. We denote $\FF_i=\HH\cap \{x_i=1\}$ for all $i\in [n]$.
Let $b\in \KK$ and let $\ell\subset \mathbb R^n$ be the line perpendicular on $\Pi$ which contains $b$.
Then $$\ell:x_1-b_1=x_2-b_2=\cdots = x_n-b_n.$$
Let $i_0\in [n]$ such that $b_{i_0}=\bigvee_{i\in[n]}b_i$.
A straightforward computation shows that
$$\ell\cap \FF = \ell \cap \FF_{i_0} = \{ (1-b_{i_0}+b_1,\ldots,1-b_{i_0}+b_n) \} = \{ \Phi(b) \}.$$
Hence, $\Phi(b)$ has the required expression.

Now, assume that $\Phi(b)=\Phi(b')$ for some $b,b'\in \KK$. It follows that 
\begin{equation}\label{mimi}
b_i - \bigvee_{i\in[n]}b_i = b'_i - \bigvee_{i\in[n]}b'_i\text{ for all }i\in [n].
\end{equation}
In particular, if $b_{i_0}=\bigvee_{i\in[n]}b_i$ then $b'_{i_0}=\bigvee_{i\in[n]}b'_i$.
From \eqref{mimi} it follows that:
\begin{equation}\label{mimisor}
b_i-b_{i_0}=b'_i-b'_{i_0}\text{ for all }i\in [n].
\end{equation}
Since $b_1+\cdots+b_n=b'_1+\cdots+b'_n=1$, from \eqref{mimisor} it follows that $b=b'$. Thus $\Phi$ is injective.

Let $a=(a_1,\ldots,a_n)\in \Imm(\Phi)$. In order to determine $b\in\KK$ such that $\Phi(b)=a$, we
let $\ell$ be the line perpendicular on $\Pi$ which contains $a$. Then
$$\ell:x_1-a_1=x_2-a_2=\cdots=x_n-a_n.$$
It follows that $\ell\cap\Pi=\{b\}$ and moreover
$$b_1-a_1=\cdots=b_n-a_n\text{ and }b_1+\cdots+b_n=1.$$
We have that
\begin{equation}\label{summ}
(b_1-a_1)+\cdots+(b_n-a_n) = (b_1+\cdots+b_n) - (a_1+\cdots+a_n) = 1 - (a_1+\cdots+a_n).
\end{equation}
On the other hand, since $b_1-a_1=\cdots=b_n-a_n$, \eqref{summ} implies
$$ n(b_i - a_i) = 1 - \sum_{i\in[n]} a_i,\;\text{ for all }i\in [n],$$
which is equivalent to 
\begin{equation}\label{ineg}
 nb_i = na_i  - \sum_{i\in[n]} a_i+1,\;\text{ for all }i\in [n].
\end{equation}
The condition $b\in \KK$ is equivalent to $b\in \Pi$ and $b_i\geq 0$ for all $i\in [n]$.

From \eqref{ineg}, $b_i\geq 0$ is equivalent to $a_1+\cdots+a_n \leq na_i+1$. Therefore $b\in \KK$ is
equivalent to $a\in\GG$, as required. The expression of $\Phi^{-1}$ follows also from \eqref{ineg}.
\end{proof}

Note that, if $a\in \GG$ and $b\in \KK$ such that $\Phi(b)=a$, then
\begin{equation}\label{cucu}
 a_i=b_i+1-\bigvee_{i\in [n]}b_i\text{ for all }i\in [n].
\end{equation}

For convenience, from now on, we will assume that the finite sets of indices $I$ and $J$ which appear in partitions and finite coverings 
are of the form $I=[n]$ and $J=[m]$, where $n$ and $m$ are positive integers. This assumption does not impede on the generality,
since any nonempty finite set can be put in bijection with a set of the form $[n]$, where $n$ is a positive integer.

\begin{dfn}\label{green}
A covering $(X,(A_i)_{i\in [n]})$ of $X$ is called a \emph{good covering} if it satisfies the conditions
$$ \sum_{i\in[n]}A_i(x) \leq n\cdot A_i(x)+1,\text{ for all }i\in [n]\text{ and }x\in X.$$
Equivalently, $(X,(A_i)_{i\in [n]})$ is a good covering if and only if $(A_1(x),\ldots,A_n(x))\in\GG$ for any $x\in X$.

We define the category $\gCovering$ as the full subcategory of $\Covering$ whose objects are good coverings.
\end{dfn}

\begin{obs}\rm
(1) $(X,(A_i)_{i\in [n]})$ is a good covering $\Leftrightarrow \sum_{i\in[n]}A_i(x) \leq n\cdot \bigwedge_{i \in [n]} A_i(x)+1.$

(2) Let $(X,(A_i)_{i\in [n]})$ and $(Y,(A'_j)_{j\in [m]})$ be two good coverings. Then the direct product 
    $(X\times Y, (A_i\wedge A'_j)_{(i,j)\in [n]\times[m]})$ is not necessarily a good covering. Indeed, assume $n=m=3$ and $x\in X$, 
		$y\in Y$ such that $A_1(x)=0.5$, $A_2(x)=1$, $A_3(x)=1$, $A'_1(y)=0.5$, $A'_2(y)=1$ and $A'_3(y)=1$. It is clear that
		$$A_1(x)+A_2(x)+A_3(x) = 3A_1(x)+1 = 2.5\text{ and }A'_1(y)+A'_2(y)+A'_3(y) = 3A'_1(y)+1 = 2.5.$$
		On the other hand, we have that $$\sum_{i,j\in[3]}A_i(x)\wedge A'_j(y) = 5\cdot 0.5 + 4\cdot 1 =6.5 > 5.5 = 9\cdot 0.5 + 1 = 9A_1(x)\wedge A'_1(y)+1.$$
\end{obs}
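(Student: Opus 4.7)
For part (1), the plan is a one-line equivalence. Since the left-hand side $\sum_{i\in[n]}A_i(x)$ does not depend on $i$, the family of inequalities $\sum_{i\in[n]}A_i(x)\leq nA_i(x)+1$ over all $i\in[n]$ is equivalent to the single tightest one, obtained at the index where $nA_i(x)+1$ is minimal --- that is, the index $i_*$ for which $A_{i_*}(x)=\bigwedge_{i\in[n]}A_i(x)$. Substituting yields $\sum_{i\in[n]}A_i(x)\leq n\bigwedge_{i\in[n]}A_i(x)+1$. Conversely, from $\bigwedge_{i\in[n]}A_i(x)\leq A_i(x)$ for every $i$, this inequality immediately implies the whole family.

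For part (2), I would verify the claim in three steps. First, both single-point data $(A_1(x),A_2(x),A_3(x))=(0.5,1,1)$ and $(A'_1(y),A'_2(y),A'_3(y))=(0.5,1,1)$ define good coverings at the points $x$ and $y$: each triple has a coordinate equal to $1$ (so the covering condition is met), and the common sum $2.5$ attains the good-covering bound $3\cdot 0.5+1=2.5$ predicted by part (1). Second, I would tabulate the $9$ pairwise minima $A_i(x)\wedge A'_j(y)$: the four pairs with $i,j\in\{2,3\}$ contribute $1$ each, while the five pairs involving index $1$ in at least one factor contribute $0.5$ each, so their sum is $4+2.5=6.5$. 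Third, for the product covering indexed by $[3]\times[3]$, the bound from part (1) is $9\cdot \bigwedge_{i,j}(A_i(x)\wedge A'_j(y))+1 = 9\cdot 0.5+1=5.5$, which is strictly smaller than $6.5$. Hence the product fails the good-covering inequality at $(x,y)$.

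There is no substantive obstacle in either part; both arguments reduce to definitional unfolding followed by elementary arithmetic. The only points requiring mild care are the reduction in part (1) to the minimizing index (noting that the sum is constant in $i$ while the bound $nA_i(x)+1$ is not), and the correct bookkeeping of the nine pairwise minima when checking the product in part (2).
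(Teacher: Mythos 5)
Your argument is correct and matches the paper: part (1) is exactly the definitional reduction to the minimizing index (the sum is constant in $i$ while the bound $nA_i(x)+1$ is tightest at $\bigwedge_{i\in[n]}A_i(x)$), and part (2) is verified by the very same counterexample and arithmetic ($6.5>5.5=9\cdot 0.5+1$) that the paper records, with the added (welcome) check that the two factors are themselves good coverings at $x$ and $y$. Nothing further is needed.
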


\begin{dfn}
We define the functor $\Fu:\Partition \to \gCovering$ as follows:
\begin{enumerate}
\item[(1)] For any partition $(X,(B_i)_{i\in[n]})$, we let $\Fu((X,(B_i)_{i\in[n]})):=(X,(A_i)_{i\in[n]})$,
      where $$A_i(x)=B_i(x)+1-\bigvee_{i\in[n]} B_i(x),\text{ for all }i\in[n],x\in X.$$
\item[(2)] For any morphism $(f,\rho):(X,(B_i)_{i\in[n]}) \to (Y,(B'_j)_{j\in[m]})$ we let $\Fu((f,\rho))=(f,\rho)$.
\end{enumerate}
We also define the functor $\Gu:\gCovering \to \Partition$ as follows:
\begin{enumerate}
\item[(1)] For any good covering $(X,(A_i)_{i\in[n]})$, we let $\Gu((X,(A_i)_{i\in[n]})):=(X,(B_i)_{i\in[n]})$,
      where $$B_i(x)=A_i(x)-\frac{1}{n}\sum_{i\in[n]} A_i(x) + \frac{1}{n},\text{ for all }i\in[n],x\in X.$$
\item[(2)] For any morphism $(f,\rho):(X,(B_i)_{i\in[n]}) \to (Y,(B'_j)_{j\in[m]})$ we let $\Gu((f,\rho))=(f,\rho)$.
\end{enumerate}
\end{dfn}

\begin{obs}\rm
Note that $(X,(A_i)_{i\in [n]})$ is a covering if and only if there exists a map $f:X\to \mathcal F$ such that
$f(x)=(A_1(x),\ldots,A_n(x))$ for all $x\in X$. In particular, $(X,(A_i)_{i\in [n]})$ is a good covering if and only
if $\Imm(f)\subset \mathcal G$. Also, $(X,(B_i)_{i\in [n]})$ is a partition if and only if there exists a map $g:X\to\mathcal K$
such that $g(x)=(B_1(x),\ldots,B_n(x))$ for all $x\in X$.
\end{obs}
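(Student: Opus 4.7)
The plan is to simply unpack the definitions of $\mathcal{F}$, $\mathcal{G}$, $\mathcal{K}$ and of the notions of (good) covering and partition, and then observe that these match coordinate-wise. Since every claim in this remark is a biconditional, I would handle each direction by reading off coordinate projections in one case and by collecting the coordinate functions into a single tuple-valued map in the other.

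For the first claim, I would start from the definition: $(X,(A_i)_{i\in[n]})$ is a fuzzy covering iff each $A_i$ is a fuzzy set on $X$ (so each $A_i(x) \in [0,1]$) and, for every $x \in X$, there is some $i_0 \in [n]$ with $A_{i_0}(x) = 1$. The first condition says the tuple $(A_1(x), \ldots, A_n(x))$ lies in $\mathcal{H} = [0,1]^n$, and the second condition says it lies in $\bigcup_{i=1}^n \{x_i = 1\}$; together, this is exactly membership in $\mathcal{F}$. Hence $x \mapsto (A_1(x),\ldots,A_n(x))$ is a well-defined map $f : X \to \mathcal{F}$. Conversely, given such an $f$, I would define $A_i := \pi_i \circ f$, where $\pi_i$ is the $i$-th coordinate projection, and the two conditions above follow from $f(x) \in \mathcal{F}$.

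For the second claim, I would invoke Proposition \ref{p1}, which characterizes $\mathcal{G}$ precisely as the subset of $\mathcal{F}$ defined by the inequalities $\sum_{i\in[n]} a_i \leq n a_i + 1$ for all $i \in [n]$. Comparing this with Definition \ref{green}, $(X,(A_i)_{i\in[n]})$ is a good covering iff $(A_1(x), \ldots, A_n(x)) \in \mathcal{G}$ for every $x \in X$, i.e.\ iff $\operatorname{Im}(f) \subset \mathcal{G}$. For the third claim, a partition with finitely many sets satisfies $B_i(x) \in [0,1]$ and $\sum_{i \in [n]} B_i(x) = 1$; the first condition places the tuple in $\mathcal{H}$, the second in the hyperplane $\{x_1 + \cdots + x_n = 1\}$, and their intersection is $\mathcal{K}$. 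So $g(x) = (B_1(x), \ldots, B_n(x)) \in \mathcal{K}$, and conversely any $g : X \to \mathcal{K}$ yields a partition by coordinate projections.

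There is no real obstacle here: the statement is essentially a reformulation of the definitions of $\mathcal{F}$, $\mathcal{G}$, $\mathcal{K}$ in the language of tuple-valued maps on $X$, with the only non-trivial input being Proposition \ref{p1} used for the good-covering case. The mild care needed is only to verify both directions of each ``iff'', which amounts to checking that projecting and tupling are mutually inverse operations on the set of $n$-tuples of $[0,1]$-valued functions on $X$.
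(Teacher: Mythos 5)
Your proposal is correct and is exactly the definition-unpacking the paper intends: the remark is stated without proof precisely because membership of the tuple $(A_1(x),\ldots,A_n(x))$ in $\FF$ (resp.\ $\GG$, via Definition \ref{green} and Proposition \ref{p1}) and of $(B_1(x),\ldots,B_n(x))$ in $\KK$ is a direct restatement of the covering, good-covering and partition conditions. Your only cosmetic slip is to ``define'' $A_i:=\pi_i\circ f$ in the converse direction when the $A_i$ are already given; what is meant is simply that $f(x)\in\FF$ forces each $A_i(x)\in[0,1]$ and some coordinate to equal $1$, which is what you in fact use.
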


\begin{prop}\label{pp1}
With the above notations, the functors $\Fu$ and $\Gu$ are well defined and fully faithful.
\end{prop}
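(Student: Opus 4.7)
The plan is to verify four things in sequence: (a) $\Fu$ sends partitions to good coverings; (b) $\Gu$ sends good coverings to partitions; (c) both functors are compatible with morphisms; (d) they are fully faithful. The central observation to keep in mind throughout is that, for any $x\in X$, the pointwise formulas defining $\Fu$ and $\Gu$ are exactly the maps $\Phi$ and $\Phi^{-1}$ from Proposition \ref{p1} applied to the tuple of membership values.

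First I would handle (a) and (b). For (a), given a partition $(X,(B_i)_{i\in[n]})$, the tuple $b:=(B_1(x),\ldots,B_n(x))$ lies in $\KK$, so the defining formula $A_i(x)=B_i(x)+1-\bigvee_{i\in[n]}B_i(x)$ is precisely $\Phi(b)\in \GG\subseteq \FF$. Since $\Phi(b)\in\FF$ means one coordinate equals $1$, the family $(A_i)$ is a covering; membership in $\GG$ together with Definition \ref{green} shows it is in fact good. Symmetrically, for (b), given a good covering $(X,(A_i)_{i\in[n]})$, the tuple $a:=(A_1(x),\ldots,A_n(x))$ lies in $\GG$ by Definition \ref{green}, and the formula for $\Gu$ is exactly $\Phi^{-1}(a)\in\KK$, so the values sum to $1$ and lie in $[0,1]$, giving a partition.

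Step (c) is the single algebraic identity on which everything else pivots. If $A_i(x)=\Phi(b)_i$, then directly from the formula
\[
A_i(x)-1 \;=\; B_i(x)-\bigvee_{i\in[n]}B_i(x),
\]
and since $\bigvee_i A_i(x)=1$ for any covering, this equals $A_i(x)-\bigvee_{i\in[n]}A_i(x)$. By Remark \ref{gigi}, the morphism condition $A_i(x)\le A'_{\rho(i)}(f(x))$ in $\Covering$ is equivalent to the inequality of these \emph{differences}, so it becomes, verbatim, the morphism condition of $\Partition$ for the associated partitions. The same identity, read from $a\in\GG$ backwards via $\Phi^{-1}$, transports morphisms in the other direction. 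Hence $\Fu$ and $\Gu$ each send morphisms to morphisms.

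Finally, for (d), both functors act as the identity $(f,\rho)\mapsto (f,\rho)$ on pairs, so the induced maps on Hom-sets are tautologically injective; the equivalence of the two inequality conditions established in (c) is exactly the statement that any morphism in the target category between images of objects comes from (a unique) morphism in the source category. That gives fullness, completing the proof. The one genuine content lies in the identity in step (c); once it is written down, everything else reduces to unpacking Proposition \ref{p1} and Remark \ref{gigi}, so I expect no substantive obstacle beyond keeping the arrows straight.
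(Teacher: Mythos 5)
Your proposal is correct and follows essentially the same route as the paper's proof: you identify the pointwise formulas with $\Phi$ and $\Phi^{-1}$ from Proposition \ref{p1} to get well-definedness on objects, and you use the identity $A_i(x)-\bigvee_{i\in[n]}A_i(x)=B_i(x)-\bigvee_{i\in[n]}B_i(x)$ (the paper's inequality \eqref{kkoi}, phrased via Remark \ref{gigi}) to show the two morphism conditions coincide, which gives full faithfulness since both functors are the identity on pairs $(f,\rho)$.
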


\begin{proof}
Let $(X,(B_i)_{i\in[n]})$ be a partition and let $x\in X$. 
For convenience, we denote $b_i:=B_i(x)$ and $a_i:=A_i(x)$ for all $i\in [n]$.
Let $a=(a_1,\ldots,a_n)$ and $b=(b_1,\ldots,b_n)$. From the definition of $\Fu$, we note that
$a=\Phi(b)$, where $\Phi$ is the map defined in Proposition \ref{p1}.
It follows that $a\in \GG$. Since $x$ was arbitrary, it follows that 
$(X,(A_i)_{i\in[n]})$ is a good covering.

Let $\Id_{(X,(B_i)_{i\in[n]})} =(\Id_X,\Id_{[n]})$ be the identity morphism of $(X,(B_i)_{i\in[n]})$ in $\Partition$.
Then $\Fu((\Id_X,\Id_{[n]}))=(\Id_X,\Id_{[n]})$ is the identity morphism of $(X,(A_i)_{i\in[n]})$ in $\gCovering$.

Now, let $(f,\rho):(X,(B_i)_{i\in[n]}) \to (Y,(B'_j)_{j\in[m]})$ be a morphism in $\Partition$, that is
$f:X\to Y$ and $\rho:[n]\to[m]$ are functions such that 
$$B_{i}(x) - \bigvee_{i\in [n]} B_i(x) \leq B'_{\rho(i)}(f(x)) - \bigvee_{j\in [m]} B'_j (f(x)),\text{ for all }x \in X\text{ and }i \in [n].$$
Assume that $\Fu((X,(B_i)_{i\in[n]}))=(X,(A_i)_{i\in [n]})$ and $\Fu((X,(B'_j)_{j\in[m]}))=(X,(A'_j)_{j\in [m]})$.

Since $(f,\rho):(X,(B_i)_{i\in[n]}) \to (Y,(B'_j)_{j\in[m]})$ is
a morphism in $\Partition$, for any $x\in X$ and $i\in [n]$ we have that
\begin{equation}\label{kkoi}
 A_i(x)=B_{i}(x)+1 - \bigvee_{i\in [n]} B_i(x) \leq B'_{\rho(i)}(f(x))+1 - \bigvee_{j\in [m]} B'_j (f(x)) = A'_{\rho(i)}(f(x)),
\end{equation}
hence $\Fu((f,\rho))=(f,\rho):(X,(A_i)_{i\in [n]}) \to (X,(A'_j)_{j\in [m]})$ is a morphism in $\gCovering$.

Conversely, if $(f,\rho):(X,(A_i)_{i\in [n]}) \to (X,(A'_j)_{j\in [m]})$ is a morphism in $\gCovering$, from \eqref{kkoi} it follows
that $(f,\rho):(X,(B_i)_{i\in[n]}) \to (Y,(B'_j)_{j\in[m]})$ is
a morphism in $\Partition$. Therefore, we have:
$$\Hom_{\Partition}((X,(B_i)_{i\in[n]}),(Y,(B'_j)_{j\in[m]}) ) =\Hom_{\gCovering}((X,(A_i)_{i\in[n]}),(Y,(A'_j)_{j\in[m]}) ).$$
Now, since $\Fu((f,\rho))=(f,\rho)$, it is clear that $\Fu$ is fully faithful.

The proof of the fact that $\Gu$ is well defined and fully faithful is similar so we will omit it.
\end{proof}

\pagebreak 

\begin{teor}\label{t1}
The categories $\Partition$ and $\gCovering$ are isomorphic.
\end{teor}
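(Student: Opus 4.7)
The plan is to leverage the two preceding propositions almost as a black box. By Proposition \ref{pp1}, the functors $\Fu:\Partition\to\gCovering$ and $\Gu:\gCovering\to\Partition$ are both well defined and fully faithful, so it only remains to verify that $\Gu\circ\Fu=\Id_{\Partition}$ and $\Fu\circ\Gu=\Id_{\gCovering}$ on the nose, not merely up to natural isomorphism. This will upgrade the equivalence to a genuine isomorphism of categories.

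First I would handle the action on objects. Fix a partition $(X,(B_i)_{i\in[n]})$ and set $b=(B_1(x),\ldots,B_n(x))\in\KK$ for each $x\in X$. By the definition of $\Fu$, the tuple $(A_1(x),\ldots,A_n(x))$ coincides with $\Phi(b)$ in the notation of Proposition \ref{p1}. Applying $\Gu$ then produces the tuple $\Phi^{-1}(\Phi(b))$, which by Proposition \ref{p1} equals $b$. Hence $\Gu(\Fu((X,(B_i)_{i\in[n]})))=(X,(B_i)_{i\in[n]})$ as an object of $\Partition$. The reverse composition is symmetric: a good covering $(X,(A_i)_{i\in[n]})$ takes values in $\GG$ by Definition \ref{green}, so $\Phi(\Phi^{-1}(a))=a$ pointwise and $\Fu(\Gu((X,(A_i)_{i\in[n]})))=(X,(A_i)_{i\in[n]})$.

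Next I would handle the action on morphisms, which is immediate since both $\Fu$ and $\Gu$ are defined by $(f,\rho)\mapsto(f,\rho)$. In particular
\[
(\Gu\circ\Fu)((f,\rho))=(f,\rho)=\Id_{\Partition}((f,\rho)),
\]
and similarly for $\Fu\circ\Gu$. The Hom-set equality
\[
\Hom_{\Partition}((X,(B_i)),(Y,(B'_j)))=\Hom_{\gCovering}((X,(A_i)),(Y,(A'_j)))
\]
established inside the proof of Proposition \ref{pp1} guarantees that these identifications of morphisms make sense after transporting objects through $\Fu$ or $\Gu$.

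I do not expect a real obstacle here; the content has already been absorbed into Propositions \ref{p1} and \ref{pp1}. The only point that deserves a moment of care is insisting on strict equalities rather than isomorphisms: one must observe that $\Fu$ and $\Gu$ act as the identity on the underlying pair of functions, so no coherence data is created or destroyed, and that the pointwise identity $\Phi^{-1}\circ\Phi=\Id_{\KK}$, $\Phi\circ\Phi^{-1}=\Id_{\GG}$ gives strict equality of membership functions and hence of objects, not merely of isomorphism classes.
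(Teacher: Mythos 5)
Your proposal is correct and follows essentially the same route as the paper: invoke Proposition \ref{pp1} for well-definedness and full faithfulness, then check the two compositions are the identity on objects by reducing, pointwise in $x$, to the identities $\Phi^{-1}\circ\Phi=\Id_{\KK}$ and $\Phi\circ\Phi^{-1}=\Id_{\GG}$ from Proposition \ref{p1}. The paper treats the second composition with a brief "similar" remark and leaves the morphism check implicit, whereas you spell both out, but the content is the same.
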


\begin{proof}
The functors $\Fu$ and $\Gu$, according to Proposition \ref{pp1}, are well defined and fully faithful.
Hence, it is enough to prove that i) $\Gu\circ\Fu = \Id_{\Ob(\Partition)}$ and ii) $\Fu\circ\Gu = \Id_{\Ob(\gCovering)}$.

i) Let $(X,(B_i)_{i\in [n]})$ a partition and let $(X,(A_i)_{i\in[n]})=\Fu((X,(B_i)_{i\in [n]}))$.
   As in the proof of Proposition \ref{p1}, we fix $x\in X$ and denote $b_i=B_i(x)$ and $a_i=A_i(x)$ for all $i\in [n]$.
	 Let $a=(a_1,\ldots,a_n)$ and $b=(b_1,\ldots,b_n)$. 
	
	 From the definition of $\Fu$ and Proposition \ref{p1} we have $a=\Phi(b)$.
	 On the other hand, if we let $(X,(B'_i)_{i\in [n]})=\Gu( (X,(A_i)_{i\in[n]}))$ and we denote $b'_i=B'_i(x)$ for $i\in [n]$
	 and $b'=(b'_1,\ldots,b'_n)$, then, from Proposition \ref{t1}, it follows that $b'=\Phi^{-1}(a)$. Therefore $b'=\Phi^{-1}(\Phi(b))=b$,
	 which completes the proof.
	
ii) The proof is similar.
\end{proof}

A natural question is the following: Is there a natural bijective correspondence between good coverings and finite coverings? 
We will see in the following sections that the answer is yes and we will present two ways of establishing this.
Suppose we did find such a correspondence, using the isomorphism of categories given in Theorem \ref{t1}, then we can deduce a bijective
correspondence between partitions and finite coverings. Unfortunately, there is an unavoidable drawback: these associations are not categorial.

\section{A bijection between partitions and finite coverings}

First, we need the following lemma:

\begin{lema}\label{lemuita}
Let $\varphi_1:\FF\to\KK$,  $\varphi_1(c):=\frac{1}{\sum_{i\in[n]} c_i}c$. Then $\varphi_1$ is bijective and, moreover,
we have $\varphi_1^{-1}:\KK\to \FF$, $\varphi_1^{-1}(b) = \frac{1}{\bigvee_{i\in [n]}b_i}b$.
\end{lema}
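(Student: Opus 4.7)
The plan is to verify directly that $\varphi_1$ lands in $\KK$, that the candidate inverse lands in $\FF$, and that their compositions are identities. The key observations driving everything are: for any $c\in\FF$ the scalar $\sum_{i\in[n]} c_i$ is nonzero because $\FF$ requires some coordinate to equal $1$; and for any $b\in\KK$ the scalar $\bigvee_{i\in[n]} b_i$ is nonzero because the coordinates of $b$ are nonnegative and sum to $1$. So both rescalings are legitimate.

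First I would check that $\varphi_1(c)\in\KK$ whenever $c\in\FF$. Nonnegativity of the coordinates is clear, and the coordinates sum to $1$ by construction; the condition $\varphi_1(c)_i\le 1$ follows from $c_i \le \sum_{j} c_j$. Next I would check that $\psi(b):=\frac{1}{\bigvee_{i\in[n]} b_i}b$ lies in $\FF$ when $b\in\KK$: the coordinates are in $[0,1]$ because dividing by the maximum entry rescales into $[0,1]$, and the coordinate where the maximum is attained becomes $1$, placing $\psi(b)$ in some face $\HH\cap\{x_i=1\}\subset\FF$.

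Then I would compute the two compositions. For $b\in\KK$, since $\sum_j b_j = 1$, applying $\varphi_1$ to $\psi(b)$ multiplies $b$ by $\bigl(\bigvee_i b_i\bigr)^{-1}$ and then divides by $\sum_j \psi(b)_j = \bigl(\bigvee_i b_i\bigr)^{-1}$, recovering $b$. For $c\in\FF$, we have $\bigvee_i c_i = 1$, so the maximum entry of $\varphi_1(c) = c/\sum_j c_j$ equals $1/\sum_j c_j$; then $\psi(\varphi_1(c))$ divides $\varphi_1(c)$ by this quantity, giving back $c$. Thus $\psi = \varphi_1^{-1}$, proving the claim.

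I do not expect any real obstacle here; the only subtlety is to remember to note that $\sum c_i \ne 0$ on $\FF$ and $\bigvee b_i \ne 0$ on $\KK$, so that $\varphi_1$ and its inverse are defined pointwise, and to exploit the defining normalizations $\bigvee_i c_i = 1$ (for $c\in\FF$) and $\sum_i b_i = 1$ (for $b\in\KK$) at the right moments when simplifying the compositions.
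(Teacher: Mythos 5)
Your verification is correct and complete, but it proceeds differently from the paper. You verify the statement purely algebraically: you check that $\varphi_1$ maps $\FF$ into $\KK$ (nonnegativity, coordinates summing to $1$, each coordinate at most $1$ since $c_i\le\sum_j c_j$), that the candidate inverse maps $\KK$ into $\FF$ (dividing by the maximum forces some coordinate to equal $1$), and that both compositions are the identity, noting the nonvanishing of the two normalizing scalars. The paper instead argues geometrically by central projection from the origin: for $c\in\FF$ the line through $O$ and $c$ meets the simplex $\KK$ in a single point, which must be $\alpha c$ with $\sum_i \alpha c_i=1$, and conversely the line through $O$ and $b\in\KK$ meets $\FF$ where the maximum coordinate of $\beta b$ equals $1$; this \emph{derives} the two formulas rather than verifying them, and it explains where they come from, but it leaves the routine membership checks implicit (and, incidentally, states the scaling factor as $\alpha=\sum_i c_i$ where it should be its reciprocal). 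Your route is more elementary and self-contained, at the cost of the geometric picture that motivates the construction in the rest of the paper; both arguments are sound.
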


\begin{proof}
Let $c\in \FF$. We consider the line $\ell$ determined by $c$ and the origin $O$. Then $\ell$ intersects $\KK$ in $b$.
We claim that $b=\varphi_1(c)$. Indeed, since $\ell\cap \KK =\{b\}$ it follows that $b=\alpha c$ for some $\alpha\in\mathbb R$ and $b_1+\cdots+b_n=1$. This implies $\alpha=\sum_{i\in[n]} c_i$. See also \cite[Remark 4.5]{N}.

Conversely, $c=\varphi_1^{-1}(b)$ is the intersection point of the line determined by $b$ and $O$ with $\FF$. 
Hence $c=\beta b$ for some $\beta\in\mathbb R$. Note that $c\in \FF$ if and only if $\bigvee_{i\in [n]}c_i=1$.
From $c=\beta b$ it follows that $\beta=\frac{1}{\bigvee_{i\in [n]}b_i}$, as required. 
\end{proof}

\begin{dfn}
If $(X,(C_i)_{i\in[n]})$ is a covering, then we let $\Fu_1((X,(C_i)_{i\in[n]})):=(X,(B_i)_{i\in [n]})$, where
$$(B_1(x),\ldots,B_n(x))=\varphi(C_1(x),\ldots,C_n(x)) = \left( \frac{C_1(x)}{\sum_{i\in[n]}C_i(x)},\ldots,\frac{C_n(x)}{\sum_{i\in[n]}C_i(x) } \right).$$
If $(X,(B_i)_{i\in[n]})$ is a partition then we let $\Gu_1((X,(B_i)_{i\in[n]})):=(X,(C_i)_{i\in [n]})$, where
$$(C_1(x),\ldots,C_n(x))=\varphi_1^{-1}(B_1(x),\ldots,B_n(x)) = 
\left( \frac{B_1(x)}{\bigvee_{i\in[n]}B_i(x)},\ldots, \frac{B_n(x)}{\bigvee_{i\in[n]}B_i(x)}\right) .$$
\end{dfn}

\begin{teor}\label{kungfu1}
With the above notations, the maps $\Fu_1:\Ob(\fCovering)\to\Ob(\Partition)$ and $\Gu_1:\Ob(\Partition)\to\Ob(\fCovering)$
are bijective and inverse to each other. 
\end{teor}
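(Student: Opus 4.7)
The plan is to reduce the entire statement to Lemma~\ref{lemuita} applied pointwise. Via the identification made explicit in the remark preceding Proposition~\ref{pp1}, specifying a covering $(X,(C_i)_{i\in[n]})$ in $\fCovering$ is the same as specifying a map $\gamma:X\to\FF$, $\gamma(x)=(C_1(x),\ldots,C_n(x))$, and specifying a partition $(X,(B_i)_{i\in[n]})$ in $\Partition$ is the same as specifying a map $\beta:X\to\KK$, $\beta(x)=(B_1(x),\ldots,B_n(x))$. Under these identifications, $\Fu_1$ is nothing but postcomposition with $\varphi_1$ and $\Gu_1$ is nothing but postcomposition with $\varphi_1^{-1}$.

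The first step is to verify well-definedness. Given a covering, for every $x\in X$ there is some $i\in[n]$ with $C_i(x)=1$, so $\sum_{i\in[n]}C_i(x)\geq 1>0$ and the denominator appearing in $\varphi_1$ is nonzero; Lemma~\ref{lemuita} then places $\varphi_1(\gamma(x))$ in $\KK$, so the resulting $B_i(x)$'s sum to $1$ and indeed define a partition. Conversely, for a partition the condition $\sum_{i\in[n]}B_i(x)=1$ forces $\bigvee_{i\in[n]}B_i(x)>0$, so $\varphi_1^{-1}$ is applicable; Lemma~\ref{lemuita} ensures $\varphi_1^{-1}(\beta(x))\in\FF$, i.e. some coordinate equals $1$, so the output is a covering.

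The second step is the inverse identities. The equalities $\Fu_1\circ\Gu_1=\Id_{\Ob(\Partition)}$ and $\Gu_1\circ\Fu_1=\Id_{\Ob(\fCovering)}$ follow immediately from $\varphi_1\circ\varphi_1^{-1}=\Id_{\KK}$ and $\varphi_1^{-1}\circ\varphi_1=\Id_{\FF}$, which are proved in Lemma~\ref{lemuita}, evaluated separately at each point $x\in X$.

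There is essentially no genuine obstacle: the content of the theorem is the pointwise content of Lemma~\ref{lemuita}, and the only thing to be careful about is that postcomposition with $\varphi_1$ (respectively $\varphi_1^{-1}$) transports covering-type data to partition-type data (respectively back), which is exactly what the well-definedness check above confirms.
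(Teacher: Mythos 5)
Your proof is correct and follows essentially the same route as the paper, which also deduces Theorem \ref{kungfu1} directly from Lemma \ref{lemuita} applied pointwise; you simply spell out the well-definedness checks that the paper leaves implicit. Nothing to add.
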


\begin{proof}
It is a direct consequence of Lemma \ref{lemuita}.
\end{proof}

\begin{obs}\rm
Note that $\Fu_1$ and $\Gu_1$ are defined similarly to the way the functors $F$ and $G$ 
from \cite[Theorem 4.1]{N} are defined on objects. However, $\Fu_1$ and $\Gu_1$ cannot be extended 
to functors in a natural way. For instance, if we set $\Fu_1((f,\rho))=(f,\rho)$ and $\Gu_1((f,\rho))=(f,\rho)$, where 
$(f,\rho): (X,(C_i)_{i\in [n]}) \to (X',(C'_j)_{j\in [m]})$ is a morphism, then $C_i(x)\leq C'_{\rho(i)}(f(x))$ for all $x\in X$ and $i\in [n]$.

If we let $(X,(B_i)_{i\in [n]})=\Fu_1(X,(C_i)_{i\in [n]})$ and 
$(X,(B'_j)_{j\in [m]})=\Fu_1(X,(C'_j)_{j\in [m]})$, then the inequality 
$$B_{i}(x) - \bigvee_{i\in [n]} B_i(x) =  \frac{C_i(x)}{\sum_{i\in [n]}C_i(x) } \leq
\frac{C'_{\rho(i)}(f(x))}{\sum_{j\in [m]}C'_j(f(x)) }=
 B'_{\rho(i)}(f(x)) - \bigvee_{j\in [m]} B'_j (f(x)),$$
may not hold. Similarly, if $(f,\rho):(X,(B_i)_{i\in [n]}) \to (X',(B'_j)_{j\in [m]})$ is a morphism in $\Partition$, then there is no guarantee for 
$\Gu_1((f,\rho))=(f,\rho):(X,(C_i)_{i\in [n]}) \to (X',(C'_j)_{j\in [m]})$ to be a morphism in $\fCovering$.
\end{obs}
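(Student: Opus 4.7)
The remark asserts that the naive extension $(f,\rho)\mapsto(f,\rho)$ of $\Fu_1$ and of $\Gu_1$ on morphisms is not well defined: for each of these maps, some morphism in the source category is sent to a non-morphism in the target category. The plan is to exhibit explicit counterexamples on a one-point base set $X=X'=\{x_0\}$; this reduces each verification to a finite arithmetic check, since both the source and the target morphism inequalities are pointwise in $x\in X$.

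For $\Fu_1$ I would first rewrite the $\Partition$-morphism condition that $\Fu_1((f,\rho))=(f,\rho)$ is asked to satisfy purely in terms of the original coverings. Using $\bigvee_i C_i(x)=1$ on any covering, one finds $\bigvee_i B_i(x)=1/s(x)$ with $s(x):=\sum_j C_j(x)$, so the required condition becomes
\[s'(f(x))\bigl(1-C_i(x)\bigr)\geq s(x)\bigl(1-C'_{\rho(i)}(f(x))\bigr).\]
When $\rho$ is a bijection, the $\fCovering$-morphism condition $C_i\leq C'_{\rho(i)}$ forces $s\leq s'$, and combined with $1-C_i\geq 1-C'_{\rho(i)}$ the displayed inequality holds automatically. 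Hence any counterexample must use a non-bijective $\rho$. I would take $n=3$, $m=2$, $\rho(1)=\rho(2)=1$, $\rho(3)=2$, $f=\id_X$, and
\[(C_1,C_2,C_3)(x_0)=(1,\tfrac12,0),\qquad (C'_1,C'_2)(x_0)=(1,0).\]
Then $s=3/2$, $s'=1$, the $\fCovering$-morphism condition is easily verified coordinatewise, but at $i=3$ the displayed inequality becomes $1\geq 3/2$, which fails.

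For $\Gu_1$ I would run the dual analysis. Writing $M(x):=\bigvee_i B_i(x)$ and $M'(y):=\bigvee_j B'_j(y)$, the extended map would require $M'(f(x))B_i(x)\leq M(x)B'_{\rho(i)}(f(x))$. A short estimate using the $\Partition$-morphism hypothesis shows that this implication can fail only when $B_i(x)>B'_{\rho(i)}(f(x))$ for some $i$, which in turn forces $M(x)>M'(f(x))$. I would realise such a failure with $n=m=3$, $\rho=\id$, $f=\id$, and
\[(B_1,B_2,B_3)(x_0)=(\tfrac12,\tfrac25,\tfrac{1}{10}),\qquad (B'_1,B'_2,B'_3)(x_0)=(\tfrac25,\tfrac{3}{10},\tfrac{3}{10}).\]
Both tuples sum to $1$, and the $\Partition$-morphism condition holds coordinatewise (with equality $-\tfrac{1}{10}=-\tfrac{1}{10}$ at $i=2$). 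However, $\Gu_1$ produces $C_2(x_0)=\tfrac{4}{5}>\tfrac{3}{4}=C'_2(x_0)$, so the $\fCovering$-morphism condition is violated.

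The main subtlety is pinpointing where slack can be introduced between the source and target morphism conditions: for $\Fu_1$ this slack is controlled entirely by the non-bijectivity of $\rho$, and for $\Gu_1$ by the gap between the maxima $M$ and $M'$. Once these observations are made, the numerical witnesses essentially construct themselves, and the verification reduces to arithmetic.
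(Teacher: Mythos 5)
Your proposal is correct, and it actually supplies more than the paper does: the paper's remark simply asserts that the morphism inequality ``may not hold'' and offers no witness, whereas you produce explicit counterexamples on a singleton base set and verify them. I checked both: for $\Fu_1$, with $(C_1,C_2,C_3)(x_0)=(1,\tfrac12,0)$, $(C'_1,C'_2)(x_0)=(1,0)$ and $\rho(1)=\rho(2)=1$, $\rho(3)=2$, one gets $B=(\tfrac23,\tfrac13,0)$, $B'=(1,0)$, and at $i=3$ the required inequality reads $-\tfrac23\leq -1$, which fails; for $\Gu_1$, your data give $C=(1,\tfrac45,\tfrac15)$ and $C'=(1,\tfrac34,\tfrac34)$, and $\tfrac45>\tfrac34$ violates the covering-morphism condition while the partition-morphism condition holds coordinatewise. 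Your structural observations (that for $\Fu_1$ a bijective $\rho$ forces $s\leq s'$ and hence no counterexample, so non-injectivity of $\rho$ is essential) are a genuine addition, explaining \emph{why} the obstruction arises rather than merely that it does; the heuristic for $\Gu_1$ about the gap between the maxima is only needed as motivation and the explicit witness carries the argument. One small point worth noting: your derivation $B_i(x)-\bigvee_i B_i(x)=\frac{C_i(x)-1}{\sum_i C_i(x)}$ is the correct identity, and it silently corrects a typo in the paper's displayed formula, which omits the ``$-1$'' in the numerator.
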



\begin{prop}\label{p22}
The map $\Phi_1:\FF \to \GG$, $\Phi_1:=\Phi\circ \varphi_1$, is a bijection and its inverse is 
$\Phi_1^{-1}:\GG \to \FF$, $\Phi_1^{-1}=\varphi_1^{-1}\circ \Phi^{-1}$. Moreover, we have that:
\begin{enumerate}
 \item[(1)] $\Phi_1(c_1,\ldots,c_n)= \left( \frac{c_1 - 1}{\sum_{i\in[n]} c_i} + 1 , \ldots, \frac{c_n - 1}{\sum_{i\in[n]} c_i} + 1  \right)$.
 \item[(2)] $\Phi_1^{-1}(a_1,\ldots,a_n) = \left( \frac{na_1-n }{ n + 1 - \sum_{i\in[n]} a_i } + 1,\ldots, \frac{na_n -n}{ n + 1 - \sum_{i\in[n]} a_i }+1  \right)$.
\end{enumerate}
\end{prop}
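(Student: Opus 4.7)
The plan is to reduce everything to the already-established bijections $\varphi_1 : \FF \to \KK$ from Lemma \ref{lemuita} and $\Phi : \KK \to \GG$ from Proposition \ref{p1}. Since a composition of bijections is a bijection and the inverse of a composition is the composition of inverses in reverse order, the bijectivity of $\Phi_1 = \Phi \circ \varphi_1$ and the identification $\Phi_1^{-1} = \varphi_1^{-1} \circ \Phi^{-1}$ are immediate formal consequences, with no further argument needed. All the actual work lies in substituting the two explicit formulas into each other and simplifying.

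For part (1), I would fix $c = (c_1,\ldots,c_n) \in \FF$, set $s := \sum_{i \in [n]} c_i$, and compute $b := \varphi_1(c) = \frac{1}{s}c$. The key observation is that $c \in \FF$ means $\bigvee_{i \in [n]} c_i = 1$, so $\bigvee_{i \in [n]} b_i = \frac{1}{s}$. Plugging into $\Phi(b) = b + (1 - \bigvee_{i \in [n]} b_i)\uu$ gives $\Phi_1(c)_i = \frac{c_i}{s} + 1 - \frac{1}{s} = \frac{c_i - 1}{s} + 1$, which is the claimed formula.

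For part (2), I would fix $a = (a_1,\ldots,a_n) \in \GG$, set $t := \sum_{i \in [n]} a_i$, and first apply $\Phi^{-1}$ to obtain $b_i = a_i + \frac{1-t}{n}$. Here the key observation is that $\GG \subset \FF$, so $\bigvee_{i \in [n]} a_i = 1$, and hence $\bigvee_{i \in [n]} b_i = 1 + \frac{1-t}{n} = \frac{n+1-t}{n}$. Applying $\varphi_1^{-1}(b) = \frac{1}{\bigvee b_i} b$ and simplifying
\[
\frac{n}{n+1-t}\left(a_i + \frac{1-t}{n}\right) = \frac{n a_i + 1 - t}{n+1-t} = \frac{na_i - n}{n+1-t} + 1
\]
yields the stated expression for $\Phi_1^{-1}(a)_i$.

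The only non-routine point in the whole argument, and the one I would flag explicitly, is the use of $\bigvee_{i \in [n]} c_i = 1$ in part (1) and $\bigvee_{i \in [n]} a_i = 1$ in part (2); both rely on the fact that the relevant inputs live in $\FF$ (for $\GG$, via the inclusion $\GG \subset \FF$ that is built into Definition \ref{green}). Once this is noted, the rest is arithmetic, so no serious obstacle is expected.
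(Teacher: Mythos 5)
Your proposal is correct and follows exactly the route the paper intends: its proof of Proposition \ref{p22} is precisely ``compose the bijections of Proposition \ref{p1} and Lemma \ref{lemuita} and compute,'' and your substitutions (using $\bigvee_{i\in[n]}c_i=1$ for $c\in\FF$ and $\bigvee_{i\in[n]}a_i=1$ for $a\in\GG$) carry out those straightforward computations correctly. The only nitpick is attributional: the inclusion $\GG\subset\FF$ comes from the definition of $\GG$ in Proposition \ref{p1} (as a subset of $\FF$), not from Definition \ref{green}, but this does not affect the argument.
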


\begin{proof}
 It follows from Proposition \ref{p1} and Lemma \ref{lemuita}  by straightforward computations.
\end{proof}

\begin{dfn}
If $(X,(C_i)_{i\in [n]})$ is a covering of $X$, we let $\overline{F}((X,(C_i)_{i\in [n]})):=(X,(A_i)_{i\in [n]})$, where
 $$(A_1(x),\ldots,A_n(x)):
   = \left( \frac{C_1(x) - 1}{\sum_{i\in[n]} C_i(x)} + 1 , \ldots, 
   \frac{C_n(x) - 1}{\sum_{i\in[n]} C_i(x)} + 1  \right)\text{, for all }x\in X.$$
If $(X,(A_i)_{i\in[n]})$ is a good covering of $X$, we let $\overline{G}((X,(A_i)_{i\in [n]})):=(X,(C_i)_{i\in [n]})$, where
  $$(C_1(x),\ldots,C_n(x)):
	  =\left( \frac{nA_1(x) -n }{ n + 1 - \sum_{i\in[n]} A_i(x)  }+1,\ldots, 
	  \frac{nA_n(x) - n }{ n + 1 - \sum_{i\in[n]} A_i(x) } +1  \right)\text{, for all }x\in X.$$
\end{dfn}

\begin{cor}\label{c2}
With the above notations, $\overline{F}:\Ob(\fCovering)\to\Ob(\gCovering)$ and 
$\overline{G}:\Ob(\gCovering)\to\Ob(\fCovering)$ are bijective and inverse to each other.
\end{cor}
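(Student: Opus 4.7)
The plan is to reduce the statement to the pointwise bijection between $\FF$ and $\GG$ already established in Proposition \ref{p22}, exploiting the remark right before Proposition \ref{pp1} that identifies coverings and good coverings with maps into $\FF$ and $\GG$ respectively.

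First I would unpack the definitions: by construction, $\overline F$ acts on an object $(X,(C_i)_{i\in[n]})$ by postcomposing the associated map $c:X\to \FF$, $x\mapsto(C_1(x),\ldots,C_n(x))$, with $\Phi_1:\FF\to\GG$. Similarly $\overline G$ acts by postcomposing the associated map $a:X\to\GG$ with $\Phi_1^{-1}:\GG\to\FF$. Thus the definitions of $\overline F$ and $\overline G$ are nothing but $\overline F(c)=\Phi_1\circ c$ and $\overline G(a)=\Phi_1^{-1}\circ a$ under this identification.

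Next, I would check well-definedness. For any covering $(X,(C_i)_{i\in[n]})$, the map $c$ takes values in $\FF$, so $\Phi_1\circ c$ takes values in $\GG$ by Proposition \ref{p22}; by the remark preceding Proposition \ref{pp1}, this means $\overline F((X,(C_i)_{i\in[n]}))$ is a good covering. Symmetrically, for any good covering $(X,(A_i)_{i\in[n]})$, the map $a$ takes values in $\GG$, so $\Phi_1^{-1}\circ a$ takes values in $\FF$, hence $\overline G((X,(A_i)_{i\in[n]}))$ is a covering.

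Finally, the bijectivity and the fact that $\overline F$ and $\overline G$ are mutually inverse follow immediately from the same property for $\Phi_1$ and $\Phi_1^{-1}$: for any covering $(X,(C_i)_{i\in[n]})$,
\begin{equation*}
\overline G(\overline F((X,(C_i)_{i\in[n]}))) = \Phi_1^{-1}\circ\Phi_1\circ c = c,
\end{equation*}
and symmetrically $\overline F\circ \overline G$ is the identity on $\Ob(\gCovering)$. There is no real obstacle here; the only thing requiring a moment of care is to be sure that the codomain restriction $\GG$ in Proposition \ref{p22} matches exactly the image condition defining good coverings in Definition \ref{green}, which is precisely the content of Proposition \ref{p1}.
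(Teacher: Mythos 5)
Your proposal is correct and follows the same route as the paper, whose proof of Corollary \ref{c2} is simply the observation that everything follows from Proposition \ref{p22}; you merely spell out the pointwise identification of coverings with maps into $\FF$ (resp.\ good coverings with maps into $\GG$) and that $\overline{F}$, $\overline{G}$ are postcomposition with $\Phi_1$, $\Phi_1^{-1}$. No further comment is needed.
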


\begin{proof}
 The conclusion follows from Proposition \ref{p22}.
\end{proof}

\begin{obs}\rm
Note that the bijections given in Corollary \ref{c2} cannot be extended to isomorphisms of categories.
\end{obs}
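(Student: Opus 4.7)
The plan is to argue by contradiction. If the object bijection $\overline{F}$ (equivalently $\overline{G}$) extended to a functor participating in an isomorphism of categories $\fCovering \cong \gCovering$, then for every pair of objects $U,V$ in $\fCovering$ the induced map would be a bijection
$$\Hom_{\fCovering}(U,V) \xrightarrow{\;\sim\;} \Hom_{\gCovering}(\overline{F}(U), \overline{F}(V)).$$
Hence it suffices to exhibit a single pair $U, V$ for which these two Hom sets have different cardinalities.

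To keep the witness transparent, I will take $X = Y = \{\ast\}$ a singleton and $n = m = 3$, so that the first component $f$ of every morphism is forced to be the unique map $\{\ast\} \to \{\ast\}$ and Hom sets reduce to counting maps $\rho: [3] \to [3]$ satisfying the relevant inequalities. The plan is to set $U = (\{\ast\}, (0.5, 1, 1))$ and $V = (\{\ast\}, (0.6, 1, 0))$; both are valid finite coverings since the second coordinate equals $1$. The inequalities $C_2 = C_3 = 1 \leq C'_{\rho(2)}, C'_{\rho(3)}$ force $\rho(2) = \rho(3) = 2$, while $C_1 = 0.5 \leq C'_{\rho(1)}$ allows $\rho(1) \in \{1, 2\}$. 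Thus $|\Hom_{\fCovering}(U,V)| = 2$.

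Applying the explicit formula of Corollary \ref{c2}, I will compute $\overline{F}(U) = (\{\ast\}, (0.8, 1, 1))$ and $\overline{F}(V) = (\{\ast\}, (0.75, 1, 0.375))$ (verifying in passing that the latter lies in $\GG$ follows either by direct check or from Corollary \ref{c2}). Since $\gCovering$ is a full subcategory of $\Covering$, a morphism $\overline{F}(U) \to \overline{F}(V)$ is a pair $(\Id, \rho)$ with $A_i \leq A'_{\rho(i)}$; but now $A_1 = 0.8 > 0.75 = A'_1$, so $\rho(1)$ is forced to be $2$, leaving $\rho = (2,2,2)$ as the only possibility. Thus $|\Hom_{\gCovering}(\overline{F}(U), \overline{F}(V))| = 1 \neq 2$, and no functorial extension of $\overline{F}$ (or of $\overline{G}$) can yield an isomorphism of categories.

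The main obstacle is locating a witness of this kind. A quick check shows that $\Phi_1$ is componentwise monotone on $\FF$, so two coverings differing in only one coordinate will always produce matching Hom counts; the asymmetry needed to break the bijection on Hom sets appears only when the target covering has substantially smaller total mass $\sum_j C'_j$ than the source, because the normalization then shrinks the gap $1 - A'_j$ on non-saturated coordinates enough to push a previously valid inequality the wrong way. Placing a zero into the target covering (as above) is the cleanest way to trigger this effect; once the example is fixed, the remaining verifications are arithmetic.
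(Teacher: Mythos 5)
The paper asserts this remark without any justification (its only related discussion is the earlier remark following Theorem \ref{kungfu1}, which merely observes that the defining inequality ``may not hold'' for the naive extension $(f,\rho)\mapsto(f,\rho)$, with no witness). Your proposal actually proves the claim, and proves it in the strongest reasonable sense: by exhibiting objects $U,V$ with $|\Hom_{\fCovering}(U,V)|=2$ but $|\Hom_{\gCovering}(\overline{F}(U),\overline{F}(V))|=1$, you rule out \emph{every} functorial extension of $\overline{F}$ that is fully faithful, not just the identity-on-morphisms one. I checked the arithmetic: $\overline{F}(0.5,1,1)=(0.8,1,1)$ and $\overline{F}(0.6,1,0)=(0.75,1,0.375)$, the latter lying on the boundary of $\GG$ since $0.75+1+0.375=2.125=3\cdot 0.375+1$; the Hom counts $2$ and $1$ are as you state, and the singleton base set correctly forces $f=\id$. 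The only soft spot is the closing heuristic paragraph: the claim that componentwise monotonicity of $\Phi_1$ forces matching Hom counts whenever the coverings differ in one coordinate is not justified and is not obviously true, but it plays no role in the argument and could simply be deleted. The proof stands without it.
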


\section{A second bijection between partitions and finite coverings}

In the following, we construct another bijection between partitions and finite coverings. First, we fix some notations:

We denote $S_n$ the set of permutations of $[n]$. Let $\sigma\in S_n$. We consider the set:
$$\FF_{\sigma}=\{c\in\FF\;:\;1=c_{\sigma(1)}\geq c_{\sigma(2)} \geq \cdots \geq c_{\sigma(n)}\}.$$
Also, we define $\GG_{\sigma}=\{a\in\FF\;:\;1=a_{\sigma(1)}\geq a_{\sigma(2)} \geq \cdots \geq a_{\sigma(n)}\}.$
Note that 
$$\FF = \bigcup_{\sigma\in S_n} \FF_{\sigma} \text{ and } \GG = \bigcup_{\sigma\in S_n} \GG_{\sigma}.$$
For a vector $(x_1,\ldots,x_n)\in \mathbb R^n$, we let $(x_1,\ldots,x_n)^{\sigma}:=(x_{\sigma(1)},\ldots,x_{\sigma(n)})$.

With the above notations, we have the following:

\begin{lema}\label{fis}
Let $\sigma\in S_n$. The map $\Phi_{2,\sigma}:\GG_{\sigma} \to \FF_{\sigma}$, defined by \small
$$ \Phi_{2,\sigma}(a_1,a_2,\ldots,a_n):=
(a_{\sigma(1)},a_{\sigma(2)},2a_{\sigma(3)}-a_{\sigma(2)},3a_{\sigma(4)}-a_{\sigma(2)}-a_{\sigma(3)},\ldots,
(n-1)a_{\sigma(n)}-a_{\sigma(2)}-\cdots-a_{\sigma(n-1)})^{\sigma^{-1}},$$ \normalsize
is bijective and its inverse is $\Phi_{2,\sigma}^{-1}:\FF_{\sigma}\to \GG_{\sigma}$, defined by
$$\Phi_{2,\sigma}^{-1}(c_1,\ldots,c_n)=(a_{\sigma(1)},\ldots,a_{\sigma(n)})^{\sigma^{-1}},$$
where $a_{\sigma(1)}=c_{\sigma(1)}$, $a_{\sigma(2)}=c_{\sigma(2)}$ and 
$$a_{\sigma(k)} = \frac{1}{1\cdot 2}c_{\sigma(2)} + \frac{1}{2\cdot 3}c_{\sigma(3)} + \cdots + \frac{1}{(k-2)(k-1)}c_{\sigma(k-1)}+\frac{1}{k-1}c_{\sigma(k)},$$
for all $3\leq k\leq n$.
\end{lema}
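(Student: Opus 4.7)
The plan is to work throughout in the sorted coordinates $\alpha_k := a_{\sigma(k)}$ and $\gamma_k := c_{\sigma(k)}$. In these coordinates the hypothesis $a \in \GG_\sigma$ becomes $1 = \alpha_1 \geq \cdots \geq \alpha_n$ together with $\sum_j \alpha_j \leq n\alpha_k + 1$ for all $k$, while $c \in \FF_\sigma$ becomes $1 = \gamma_1 \geq \cdots \geq \gamma_n \geq 0$. The forward formula reads $\gamma_k = (k-1)\alpha_k - (\alpha_2 + \cdots + \alpha_{k-1})$ for $k \geq 2$ with $\gamma_1 = \alpha_1 = 1$, and the proposed inverse reads $\alpha_k = \sum_{j=2}^{k-1} \frac{\gamma_j}{(j-1)j} + \frac{\gamma_k}{k-1}$ for $k \geq 3$ with $\alpha_1 = \gamma_1$ and $\alpha_2 = \gamma_2$. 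The permutation $\sigma$ plays no role beyond this relabelling, so one may as well take $\sigma = \id$. I would then verify four statements: (a) the forward map lands in $\FF_\sigma$; (b) the inverse map lands in $\GG_\sigma$; (c) $\Phi_{2,\sigma}^{-1} \circ \Phi_{2,\sigma} = \id$; and (d) $\Phi_{2,\sigma} \circ \Phi_{2,\sigma}^{-1} = \id$.

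For (a), the identity $\gamma_{k-1} - \gamma_k = (k-1)(\alpha_{k-1} - \alpha_k) \geq 0$ is immediate, so combined with $\gamma_2 = \alpha_2 \leq 1 = \gamma_1$ it gives $1 = \gamma_1 \geq \gamma_2 \geq \cdots \geq \gamma_n$; the remaining bound $\gamma_n \geq 0$ is the clean identity $\gamma_n = n\alpha_n + 1 - \sum_{j=1}^n \alpha_j$, which is precisely the good-covering condition of Definition \ref{green} at the index where $\alpha$ attains its minimum. For (b), the partial-fraction identity $\frac{1}{(j-1)j} = \frac{1}{j-1} - \frac{1}{j}$ telescopes $\sum_{j=2}^{k-1} \frac{1}{(j-1)j} + \frac{1}{k-1}$ to $1$, so each $\alpha_k$ is a convex combination of $\gamma_2, \ldots, \gamma_k \in [0,1]$ and therefore lies in $[0,1]$; the ordering $\alpha_{k-1} \geq \alpha_k$ drops out of $\alpha_{k-1} - \alpha_k = (\gamma_{k-1} - \gamma_k)/(k-1) \geq 0$, and the good-covering inequality once more reduces to $n\alpha_n + 1 - \sum_j \alpha_j = \gamma_n \geq 0$, now read as the slack being nonnegative (and sufficient for all $k$ because $\alpha_n$ is the minimum of the $\alpha_i$'s).

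For (c), substituting $\gamma_j = (j-1)\alpha_j - \sum_{i=2}^{j-1}\alpha_i$ into the inverse formula for $\alpha_k$ and transposing the resulting double sum $\sum_{j=2}^{k-1} \frac{1}{(j-1)j} \sum_{i=2}^{j-1} \alpha_i$, the same partial-fraction identity collapses the inner tails $\sum_{j=i+1}^{k-1} \frac{1}{(j-1)j}$ to $\frac{1}{i} - \frac{1}{k-1}$, which is exactly what is needed to annihilate the coefficient of $\alpha_i$ for $2 \leq i < k$ and to leave the coefficient of $\alpha_k$ equal to $1$. Statement (d) is then verified by an entirely symmetric substitution. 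The main obstacle I anticipate is the bookkeeping in (c): transposing the double sum and applying the telescoping identity cleanly is the one step that is not fully automatic; once it is in place, the rest reduces to the short ordering and convexity arguments of (a) and (b).
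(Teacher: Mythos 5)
Your proof is correct, and it rests on the same skeleton as the paper's: reduce to $\sigma=e$ by working in sorted coordinates, use the identity $c_{k-1}-c_k=(k-1)(a_{k-1}-a_k)$ to get the monotonicity of the image, and observe that in sorted coordinates the good-covering condition is exactly nonnegativity of the last component $(n-1)a_n-a_2-\cdots-a_{n-1}\geq 0$ (note the paper's displayed definition of $\GG_\sigma$ has a typo, $a\in\FF$ instead of $a\in\GG$; like the paper's own proof, you use the intended meaning). Where you diverge is in how invertibility is established: the paper first proves existence and uniqueness of the preimage abstractly, by noting that $\Phi_{2,e}(a)=c$ is a triangular linear system with determinant $(n-1)!\neq 0$ (Cramer's rule), checks that the unique solution lies in $\GG_e$, and only then derives the closed-form inverse by setting $s_k=a_2+\cdots+a_k$ and solving the recursion $s_k=\frac{k}{k-1}s_{k-1}+\frac{1}{k-1}c_k$ by induction. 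You instead take the stated closed form as the candidate inverse and verify everything directly: the telescoping $\frac{1}{(j-1)j}=\frac{1}{j-1}-\frac{1}{j}$ shows the coefficients sum to $1$ (so each $a_k$ is a convex combination of $c_2,\ldots,c_k$, giving $a\in[0,1]^n$), the difference identity gives the ordering, and both compositions collapse to the identity after transposing the double sum. Your route costs a bit more bookkeeping in the composition check (and you should carry out the second composition rather than only assert symmetry, since (c) alone gives only injectivity of $\Phi_{2,\sigma}$), but it avoids Cramer's rule and the induction on $s_k$; the paper's route gets bijectivity before, and independently of, the explicit formula. Both are complete and the computations you cite (the telescoped tail $\sum_{j=i+1}^{k-1}\frac{1}{(j-1)j}=\frac{1}{i}-\frac{1}{k-1}$ killing the coefficients of $a_i$) do check out.
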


\begin{proof}
Without any loss of generality, we can assume that $\sigma=e\in S_n$ is the identical permutation. 
For $a\in \GG_e$, we have that
\begin{equation}\label{vare}
\Phi_{2,e}(a_1,a_2,\ldots,a_n)=(a_1,a_2,2a_3-a_2,3a_4-a_2-a_3,\ldots,(n-1)a_n-a_2-\cdots-a_{n-1}).
\end{equation}
Also 
$\GG_e=\{(a_1,a_2,\ldots,a_n)\;:\;1=a_1\geq a_2\geq \cdots \geq a_n \geq 0,\;a_2+\cdots+a_{n-1}\leq (n-1)a_n\}.$
Since $1=a_1\geq a_2\geq \cdots \geq a_n$, be straightforward computations it follows that
$$1\geq a_2\geq 2a_3-a_2 \geq \cdots \geq (n-1)a_n-a_2-\cdots-a_{n-1}.$$
On the other hand, since $a\in \GG_e$, it follows that $(n-1)a_n-a_2-\cdots-a_{n-1}\geq 0$. This shows that
the function $\Phi_{2,e}$ is well defined. 

Let $c=(c_1,\ldots,c_n)\in\FF_e$. We claim that there exists a unique $a=(a_1,\ldots,a_n)\in \GG_e$ such that $\Phi_{2,e}(a)=c$
and hence we can define $\Phi_{2,e}^{-1}(c):=a$. Indeed, from \eqref{vare}, $\Phi_{2,e}(a)=c$ is equivalent to the
linear system 
\begin{equation}\label{sis}
\begin{cases} a_1 = c_1 \\ a_2=c_2 \\ 2a_3-a_2 = c_3 \\ \vdots \\ (n-1)a_n - a_2-\cdots-a_{n-1} =c_n  \end{cases},
\end{equation}
which has the associated determinant $\Delta:=\begin{vmatrix} 1 & 0 & 0 & 0 & \cdots & 0 \\ 0 & 1 & 0 & 0 & \cdots & 0 \\ 0 & -1 & 2 & 0 & \cdots & 0 \\ 0 & -1 & -1 & 3 & \cdots & 0 
\\ \vdots & \vdots & \vdots & \vdots & \ddots & \vdots  \\ 0 & -1 & -1 & -1 & \cdots & n-1 \end{vmatrix} = (n-1)! \neq 0$. It follows that
$a_1,\ldots,a_n$ are uniquely determined from $c_1,\ldots,c_n$,
using the Cramer's rule. Moreover, since $c_1\geq c_2\geq \cdots \geq c_n$ it follows easily from \eqref{sis}
that $a_1\geq a_2\geq a_3 \geq \cdots \geq a_n$.
Also, $0\leq c_n = (n-1)a_n-a_2-\cdots-a_{n-1}$, thus $a\in\GG_e$, as required.

We let $s_k:=a_2+\cdots+a_k$, $2\leq k\leq n$, and $s_1:=0$. We also let $c'_k:=\frac{1}{k-1}c_k$, $2\leq k\leq n$. From \eqref{sis} it follows that
\begin{equation}\label{indu}
 s_2=c'_2,\; s_k=\frac{k}{k-1}s_{k-1}+c'_k,\;3\leq k\leq n.
\end{equation}
Using induction on $k\geq 2$, from \eqref{indu} it follows that:
\begin{equation}\label{esn}
 s_k = \frac{k}{2}c'_2+\frac{k}{3}c'_3+\cdots+\frac{k}{k}c'_k,\;2\leq k\leq n.
\end{equation}
Obviously, $a_1=c_1$ and $a_2=c_2$. From \eqref{esn} it follows that 
$$a_k = s_k-s_{k-1} = \frac{1}{1\cdot 2}c_2 + \frac{1}{2\cdot 3}c_3 + \cdots + \frac{1}{(k-2)(k-1)}c_{k-1}+\frac{1}{k-1}c_k,$$
for all $3\leq k\leq n$. Thus, we get the required formula for $\Phi_{2,e}^{-1}$.
\end{proof}

\begin{lema}\label{sum} 
Let $a \in \GG_{\sigma}$, $c=\Phi_{2,\sigma}(a)$ and $1\leq i < j\leq n$. Then $a_i=a_j$ if and only if $c_i=c_j$.
\end{lema}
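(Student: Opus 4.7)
The plan is to reduce to the identity permutation and then show that the difference $c_{k+1}-c_k$ between consecutive entries of $c$ is a positive multiple of the difference $a_{k+1}-a_k$ for every $1\leq k\leq n-1$. Since both sequences are weakly monotone, this consecutive-difference equivalence is enough to conclude for arbitrary indices $i<j$.

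First I would note that conjugating both sides by $\sigma$ reduces the statement to the case $\sigma=e$. Indeed, by definition, the equalities $a_i=a_j$ and $c_i=c_j$ compare entries at indices $i,j$ while the formula for $\Phi_{2,\sigma}$ applies $\sigma^{-1}$ at the end; replacing $(i,j)$ with $(\sigma^{-1}(i),\sigma^{-1}(j))$ brings us to the situation where $a\in\GG_e$, $c\in\FF_e$ satisfy $1=a_1\geq a_2\geq\cdots\geq a_n$ and $1=c_1\geq c_2\geq\cdots\geq c_n$, and the claim becomes: for $i<j$, $a_i=a_j$ iff $c_i=c_j$. Because both $a$ and $c$ are weakly decreasing, $a_i=a_j$ is equivalent to $a_k=a_{k+1}$ for all $i\leq k<j$, and similarly for $c$. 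So it suffices to prove $a_k=a_{k+1}\Leftrightarrow c_k=c_{k+1}$ for each $1\leq k\leq n-1$.

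For this, I would compute $c_{k+1}-c_k$ directly from the formula defining $\Phi_{2,e}$ in Lemma \ref{fis}. For $k=1$ we get $c_2-c_1=a_2-a_1$. For $k=2$ we get $c_3-c_2=(2a_3-a_2)-a_2=2(a_3-a_2)$. For $k\geq 3$ the formula $c_k=(k-1)a_k-(a_2+\cdots+a_{k-1})$ and the analogous one for $c_{k+1}$ give
\[
c_{k+1}-c_k = k a_{k+1}-(a_2+\cdots+a_k) - (k-1)a_k + (a_2+\cdots+a_{k-1}) = k(a_{k+1}-a_k).
\]
Thus in all cases $c_{k+1}-c_k=k(a_{k+1}-a_k)$ with $k\geq 1$, so $c_k=c_{k+1}$ iff $a_k=a_{k+1}$, and the lemma follows by the reduction above.

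The step that requires the most care is the bookkeeping in the computation for $k\geq 3$, since the two telescoping sums $a_2+\cdots+a_{k-1}$ and $a_2+\cdots+a_k$ must be subtracted carefully; the rest is essentially formal once the monotonicity of $a$ and $c$ is used to turn non-consecutive equality into a chain of consecutive equalities.
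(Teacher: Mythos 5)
Your proposal is correct and follows essentially the same route as the paper: reduce to $\sigma=e$, use monotonicity to pass to consecutive indices, and verify the key identity $c_{k+1}-c_k=k(a_{k+1}-a_k)$ from the defining formula of $\Phi_{2,e}$. Your version merely spells out more explicitly the monotonicity of $c$ (guaranteed by Lemma \ref{fis}) needed for the converse direction, which the paper leaves implicit.
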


\begin{proof}
Without any loss of generality we can assume that $\sigma=e$ and $a \in \GG_{e}$. Also, we can assume that $j=i+1$, 
since $a_{i}\geq a_{i+1}$. If $i=1$ then $c_1=a_1$ and $c_2=a_2$, hence there is nothing to prove. 
Now, suppose $i\geq 2$.

For $2 \leq m \leq n$, we let $s_{m}=\sum_{i=1}^m a_{i}$. Also, we let $s_1=0$. 
Then $c_{i}=(i-1)a_{i}-s_{i-1}$ and $c_{i+1}=ia_{i+1}-s_{i}$. It follows that $c_{i+1}-c_{i}=i(a_{i+1} - a_{i})$,
 which completes the proof.
\end{proof}

\begin{prop}\label{p2}
The map $\Phi_2:\GG \to \FF$, $\Phi_2(a):=\Phi_{2,\sigma}(a)$ for $a\in \GG_{\sigma}$, is well defined and bijective.
\end{prop}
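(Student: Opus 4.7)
The plan is to first verify that $\Phi_2$ is well defined, then establish bijectivity by reducing to the piecewise bijections $\Phi_{2,\sigma}$ from Lemma \ref{fis}. For well-definedness, suppose $a\in \GG_\sigma \cap \GG_\tau$. Since both $\sigma$ and $\tau$ arrange the coordinates of $a$ in non-increasing order starting with $1$, we have $a_{\sigma(k)} = a_{\tau(k)}$ for every $k$, and $\sigma^{-1}\tau$ permutes positions only within the ``tied blocks'' corresponding to equal $a$-values. This stabilizer is generated by transpositions of consecutive positions within a single tied block, so it suffices to check the case in which $\tau$ differs from $\sigma$ by swapping two adjacent positions $k, k+1$ with $a_{\sigma(k)} = a_{\sigma(k+1)} =: t$. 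Setting $S = a_{\sigma(2)} + \cdots + a_{\sigma(k-1)}$, a direct substitution into the defining formula of $\Phi_{2,\sigma}$ shows that the output at each of the two indices $\sigma(k)$ and $\sigma(k+1)$ equals $(k-1)t - S$ under either ordering, while the outputs at all other indices are manifestly unaffected by the swap.

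For surjectivity, the identity $\FF = \bigcup_{\sigma \in S_n} \FF_\sigma$ lets us pick, for any $c\in \FF$, some $\sigma$ with $c\in \FF_\sigma$; then $a := \Phi_{2,\sigma}^{-1}(c)$ lies in $\GG_\sigma \subseteq \GG$ and satisfies $\Phi_2(a) = c$ by Lemma \ref{fis}. For injectivity, assume $\Phi_2(a) = \Phi_2(a') = c$ with $a\in \GG_\sigma$ and $a' \in \GG_\tau$. Then $c = \Phi_{2,\sigma}(a) \in \FF_\sigma$ and $c = \Phi_{2,\tau}(a') \in \FF_\tau$, so both $\sigma$ and $\tau$ sort $c$ non-increasingly. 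Lemma \ref{sum} identifies the tied blocks of $a$ (as index subsets of $[n]$) with those of $c$, which forces $\tau$ to sort $a$ non-increasingly as well, so $a \in \GG_\sigma \cap \GG_\tau$. The well-definedness step then gives $\Phi_{2,\tau}(a) = \Phi_{2,\sigma}(a) = c = \Phi_{2,\tau}(a')$, and the bijectivity of $\Phi_{2,\tau}:\GG_\tau \to \FF_\tau$ from Lemma \ref{fis} forces $a = a'$.

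The main obstacle is the well-definedness verification, since the sets $\GG_\sigma$ overlap precisely on those $a\in \GG$ with repeated coordinates in their sorted ordering. The rest is bookkeeping: Lemma \ref{sum} supplies the bridge between tied blocks of $a$ and tied blocks of $c$ needed to pass from ``$c\in \FF_\sigma \cap \FF_\tau$'' back to ``$a \in \GG_\sigma \cap \GG_\tau$'', and Lemma \ref{fis} supplies bijectivity on each piece.
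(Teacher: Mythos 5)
Your proof is correct, and its overall shape is the same as the paper's: glue the piecewise bijections $\Phi_{2,\sigma}:\GG_{\sigma}\to\FF_{\sigma}$ from Lemma \ref{fis}, the only issue being agreement on the overlaps $\GG_{\sigma}\cap\GG_{\tau}$. The execution differs in two places, both in your favor or at worst neutral. For the overlap check, the paper passes to the common sorted vector and uses Lemma \ref{sum} to see that the image $c$ is constant on each tied block, so reindexing within blocks does not change the output; you instead factor $\sigma^{-1}\tau$ into adjacent transpositions inside tied blocks and verify invariance by direct substitution. That computation is fine for $k\geq 2$, but note your closed form $(k-1)t-S$ does not literally cover a tie at the top position ($k=1$), where both outputs are simply $a_{\sigma(1)}=a_{\sigma(2)}=1$; the invariance still holds trivially, so this is only a cosmetic fix. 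Second, for bijectivity the paper just declares $\Phi_2^{-1}(c):=\Phi_{2,\sigma}^{-1}(c)$ for $c\in\FF_{\sigma}$, leaving implicit the check that this is independent of the choice of $\sigma$ (equivalently, that $\Phi_2$ is injective across different pieces); your injectivity argument supplies exactly that step, using Lemma \ref{sum} to transfer any permutation sorting $c$ back to one sorting $a$, so that $a\in\GG_{\sigma}\cap\GG_{\tau}$ and the already-established well-definedness together with the bijectivity of $\Phi_{2,\tau}$ force $a=a'$. In this respect your write-up is more complete than the paper's proof of Proposition \ref{p2}, while relying on the same two lemmas.
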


\begin{proof}
In order to prove the assertion, it suffices to show that if $a\in \GG_{\sigma}\cap\GG_{\tau}$, where $\sigma,\tau\in S_n$ are
two different permutations, then $\Phi_{2,\sigma}(a)=\Phi_{2,\tau}(a)$.

Since $a \in \GG_{\sigma} \cap \GG_{\tau}$ it follows that there exist $1 \leq \ell_{1} < \ell_{2} < \cdots < \ell_{r} = n$ such that
\begin{equation}\label{permu1}
1=a_{\sigma(1)} = \cdots = a_{\sigma(\ell_1)} > a_{\sigma(\ell_1+1)} = \cdots = a_{\sigma(\ell_2)} > \cdots> a_{\sigma(\ell_{r-1}+1)} = \cdots = a_{\sigma(\ell_r)}.
\end{equation}
and similarly
\begin{equation}\label{permu2}
1=a_{\tau(1)} = \cdots = a_{\tau(\ell_1)} > a_{\tau(\ell_1+1)} = \cdots = a_{\tau(\ell_2)} > \cdots > a_{\tau(\ell_{r-1}+1)} = \cdots = a_{\tau(\ell_r)}.
\end{equation}
Let $\alpha_j=a_{\sigma(j)}=a_{\tau(j)}$ for all $1\leq j\leq n$ and $\alpha=(a_1,\ldots,a_n)$.
From \eqref{permu1} and \eqref{permu2} it follows that 
$$ c:=\Phi_{2,\sigma}(a)=\Phi_{2,e}(\alpha)^{\sigma^{-1}}\text{ and }c':=\Phi_{2,\tau}(a)=\Phi_{2,e}(\alpha)^{\tau^{-1}}.$$
Moreover, \eqref{permu1} and \eqref{permu2} imply that 
$$ \{\sigma(\ell_{j-1}+1),\ldots,\sigma(\ell_j)\} = \{ \tau(\ell_{j-1}+1),\ldots,\tau(\ell_j)\}\text{ for all }1\leq j\leq r,$$
where $\ell_0=0$. From Lemma \ref{sum} and \eqref{permu1} it follows that:
$ c_{\sigma(j)}=c_{\sigma(\ell_i)}\text{ for }\ell_{i-1}<j\leq \ell_i.$

Similarly, from Lemma \ref{sum} and \eqref{permu2} it follows that:
$ c'_{\tau(j)}=c'_{\tau(\ell_i)}\text{ for }\ell_{i-1}<j\leq \ell_i.$

From the above considerations, it is easy to see that $c=c'$, as required.

The inverse of $\Phi_2$ is defined by setting $\Phi_2^{-1}(c):=\Phi_{2,\sigma}^{-1}(c)$, where $c\in\FF_{\sigma}$. 
\end{proof}

\begin{dfn}
Let $(X,(A_i)_{i\in[n]})$ be a good covering of $X$. We define $\overline{F}_2((X,(A_i)_{i\in[n]})):=(X,(C_i)_{i\in[n]})$,
            where:
  $$(C_1(x),\ldots,C_n(x)):=\Phi_2(A_1(x),\ldots,A_n(x)),\text{ for all }x\in X.$$
Let $(X,(C_i)_{i\in [n]})$ be a covering of $X$. We define $\overline{G}_2((X,(C_i)_{i\in[n]})):=(X,(A_i)_{i\in[n]})$,
where 
$$(A_1(x),\ldots,A_n(x)):=\Phi_2^{-1}(C_1(x),\ldots,C_n(x)),\text{ for all }x\in X.$$ 	
\end{dfn}

\begin{prop}\label{c22}
The maps $\overline{F}_2:\Ob(\gCovering)\to \Ob(\fCovering)$ and 
$\overline{G}_2:\Ob(\fCovering)\to \Ob(\gCovering)$ are well defined, bijective and inverse to each other.
\end{prop}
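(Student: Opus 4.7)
The plan is to reduce everything to the pointwise bijection $\Phi_2 : \GG \to \FF$ established in Proposition \ref{p2}, observing that both $\overline{F}_2$ and $\overline{G}_2$ are defined by applying $\Phi_2$ (resp.\ $\Phi_2^{-1}$) coordinatewise to the membership vector at each $x \in X$.

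First, I would verify well-definedness. Fix a good covering $(X,(A_i)_{i\in[n]})$ and $x \in X$. By Definition \ref{green}, the vector $(A_1(x),\ldots,A_n(x))$ lies in $\GG$, so Proposition \ref{p2} guarantees $(C_1(x),\ldots,C_n(x)) := \Phi_2(A_1(x),\ldots,A_n(x)) \in \FF$. In particular, $\bigvee_{i\in[n]} C_i(x) = 1$, hence there exists $i\in[n]$ with $C_i(x)=1$, so $(X,(C_i)_{i\in[n]})$ is indeed a (finite) covering. This shows $\overline{F}_2$ lands in $\Ob(\fCovering)$. Conversely, if $(X,(C_i)_{i\in[n]})$ is a covering then at each $x$ the vector $(C_1(x),\ldots,C_n(x))$ lies in $\FF$, and applying $\Phi_2^{-1}$ yields a vector in $\GG$; this says exactly that $(X,(A_i)_{i\in[n]}) := \overline{G}_2((X,(C_i)_{i\in[n]}))$ is a good covering, as required by Definition \ref{green}.

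Next, I would prove the inverse relations $\overline{G}_2 \circ \overline{F}_2 = \Id_{\Ob(\gCovering)}$ and $\overline{F}_2 \circ \overline{G}_2 = \Id_{\Ob(\fCovering)}$. Both identities are pointwise consequences of $\Phi_2^{-1}\circ\Phi_2 = \Id_\GG$ and $\Phi_2\circ\Phi_2^{-1} = \Id_\FF$ from Proposition \ref{p2}: for every $x\in X$ the membership vectors are reconstructed exactly, and since a (good) covering is fully determined by the pointwise data $x\mapsto (A_1(x),\ldots,A_n(x))$, the two coverings coincide. Bijectivity then follows formally from the existence of a two-sided inverse.

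I do not foresee a real obstacle here: the statement is essentially the object-level transport of Proposition \ref{p2} along the identification of (good) coverings on $X$ with maps $X\to\FF$ (resp.\ $X\to\GG$) already noted in the remark preceding Proposition \ref{pp1}. The only mildly delicate point is keeping the well-definedness argument clean across the union $\FF=\bigcup_{\sigma}\FF_\sigma$, but this is handled entirely inside Proposition \ref{p2}, so here it requires no further work.
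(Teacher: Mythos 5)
Your proposal is correct and follows essentially the same route as the paper: the paper's proof simply invokes the pointwise bijection $\Phi_2:\GG\to\FF$ of Proposition \ref{p2} (via the analogy with Corollary \ref{c2}), which is exactly your reduction of $\overline{F}_2$ and $\overline{G}_2$ to coordinatewise application of $\Phi_2$ and $\Phi_2^{-1}$ on the membership vectors. Your explicit well-definedness check and the two-sided inverse argument are just the details the paper leaves implicit.
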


\begin{proof}
The proof is similar to the proof of Corollary \ref{c2}.
\end{proof}

\begin{teor}\label{t3}
The maps $\Fu_2:\Ob(\fCovering)\to\Ob(\Partition)$, $\Fu_2=\Gu \circ \overline{F}_2$, and 
$\Gu_2:\Ob(\Partition)\to\Ob(\fCovering)$, $\Gu_2=\overline{G}_2\circ \Fu$, are bijective and inverse to each other. 
\end{teor}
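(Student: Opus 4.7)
The proof is purely formal: $\Fu_2$ and $\Gu_2$ are each defined as a composition of two maps that have already been shown to be bijective with explicit two-sided inverses, so it suffices to chain those inversions. The plan is to assemble the identities from the two relevant results and reassociate.

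First, I would recall that by Theorem \ref{t1}, the functors $\Fu$ and $\Gu$ are mutually inverse isomorphisms of categories between $\Partition$ and $\gCovering$; in particular, their restrictions to object sets satisfy $\Fu \circ \Gu = \id_{\Ob(\gCovering)}$ and $\Gu \circ \Fu = \id_{\Ob(\Partition)}$, with the pointwise content being $\Phi \circ \Phi^{-1} = \id_{\GG}$ and $\Phi^{-1} \circ \Phi = \id_{\KK}$. By Proposition \ref{c22}, the maps $\overline{F}_2$ and $\overline{G}_2$ satisfy the analogous pair of identities $\overline{F}_2 \circ \overline{G}_2 = \id_{\Ob(\fCovering)}$ and $\overline{G}_2 \circ \overline{F}_2 = \id_{\Ob(\gCovering)}$, whose pointwise content is $\Phi_2 \circ \Phi_2^{-1} = \id_{\FF}$ and $\Phi_2^{-1} \circ \Phi_2 = \id_{\GG}$ from Proposition \ref{p2}.

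Next, I would observe that $\Fu_2$ and $\Gu_2$, being compositions of bijections whose domains and codomains chain correctly through $\Ob(\gCovering)$, are themselves bijections $\Ob(\fCovering) \to \Ob(\Partition)$ and $\Ob(\Partition) \to \Ob(\fCovering)$ respectively. The two composition identities $\Fu_2 \circ \Gu_2 = \id_{\Ob(\Partition)}$ and $\Gu_2 \circ \Fu_2 = \id_{\Ob(\fCovering)}$ then fall out by reassociating and inserting the cancellations
\[
\Fu \circ \Gu = \id, \qquad \Gu \circ \Fu = \id, \qquad \overline{F}_2 \circ \overline{G}_2 = \id, \qquad \overline{G}_2 \circ \overline{F}_2 = \id.
\]

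There is no real obstacle to overcome here, since all the geometric content has already been absorbed into Theorem \ref{t1} (the orthogonal-projection bijection $\KK \leftrightarrow \GG$) and Proposition \ref{c22} (the permutation-indexed bijection $\GG \leftrightarrow \FF$ built from the maps $\Phi_{2,\sigma}$ of Lemma \ref{fis} and glued via Lemma \ref{sum} and Proposition \ref{p2}). The only thing worth double-checking is that the composition orders match the source and target of each factor, which is immediate.
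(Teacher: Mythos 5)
Your proposal is correct and follows essentially the same route as the paper, whose proof simply combines Theorem \ref{t1} with Proposition \ref{c22} and composes the two pairs of mutually inverse object bijections exactly as you do. The only point worth flagging is that for the factors to chain through $\Ob(\gCovering)$ as you describe, the compositions must be read as $\Fu_2=\Gu\circ\overline{G}_2$ and $\Gu_2=\overline{F}_2\circ\Fu$ (the subscripts in the statement are evidently swapped relative to the directions given in Proposition \ref{c22}); this is a typographical issue in the statement, not a gap in your argument.
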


\begin{proof}
It follows from Theorem \ref{t1} and Proposition \ref{c22}.
\end{proof}

\begin{obs}\rm
Similarly to $\Fu_1$ and $\Gu_1$, the maps $\Fu_2$ and $\Gu_2$ cannot be extended to isomorphisms of categories,
since $\overline{F}_2$ and $\overline{G}_2$ cannot be extended to isomorphisms of categories.
\end{obs}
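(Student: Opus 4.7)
The strategy is to reduce the claim for $\Fu_2$ and $\Gu_2$ to the corresponding claim for $\overline{F}_2$ and $\overline{G}_2$, and then rule out any functor extension of the latter via a cardinality argument on hom-sets. On objects, $\Fu_2$ and $\Gu_2$ factor through the bijections $\overline{G}_2$ and $\overline{F}_2$ composed with the isomorphisms $\Fu, \Gu$ of Theorem \ref{t1}. Hence any extension of $\Fu_2$ to a functor that is an isomorphism of categories would, after composition with $\Fu$ (an isomorphism), yield a corresponding extension of $\overline{G}_2$ to an isomorphism; symmetrically for $\Gu_2$ and $\overline{F}_2$. Since the inverse of any functor isomorphism is a functor isomorphism, it suffices to show that $\overline{F}_2$ does not extend to an isomorphism of categories.

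For the counterexample take $n = 3$ and singleton base sets $X = \{x\}$, $Y = \{y\}$. Define good coverings by $A(x) := (1, \tfrac{1}{2}, \tfrac{1}{2})$ and $A'(y) := (1, \tfrac{4}{5}, \tfrac{1}{2})$. Checking the inequalities $\sum_{i \in [3]} a_i \leq 3 a_i + 1$ confirms that both tuples lie in $\GG$, so these are indeed good coverings by Definition \ref{green}. Both tuples lie in $\GG_e$, so Lemma \ref{fis} gives $\overline{F}_2(X, A) = (X, C)$ with $C(x) = (1, \tfrac{1}{2}, \tfrac{1}{2})$ and $\overline{F}_2(Y, A') = (Y, C')$ with $C'(y) = (1, \tfrac{4}{5}, \tfrac{1}{5})$.

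Count morphisms directly. A morphism $(f, \rho): (X, A) \to (Y, A')$ in $\gCovering$ is determined by the unique map $f$ together with $\rho: [3] \to [3]$ satisfying $a_i \leq a'_{\rho(i)}$ for all $i$. The condition at $i = 1$ forces $\rho(1) = 1$ (only $a'_1 = 1$), while $\rho(2)$ and $\rho(3)$ can take any value in $[3]$, since $a_2 = a_3 = \tfrac{1}{2}$ is dominated by every entry of $a'$. This yields $9$ morphisms. For the image coverings the analogous inequality $c_i \leq c'_{\rho'(i)}$ again forces $\rho'(1) = 1$, but now restricts $\rho'(2), \rho'(3)$ to $\{1, 2\}$ because $c'_3 = \tfrac{1}{5} < \tfrac{1}{2}$, giving only $4$ morphisms. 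As $9 \neq 4$, no functor whose action on objects is $\overline{F}_2$ can be bijective on this hom-set, hence none can be an isomorphism of categories.

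The main obstacle is ensuring the reduction step rules out \emph{all} functor extensions rather than only the tautological one $(f, \rho) \mapsto (f, \rho)$; the hom-set cardinality argument accomplishes this cleanly, since it is a categorical invariant insensitive to the particular definition of the extension on morphisms. The rest is routine bookkeeping once the directions of $\overline{F}_2, \overline{G}_2, \Fu, \Gu$ are correctly aligned so that the composition formulas type-check.
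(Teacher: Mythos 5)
Your proposal is correct, and it actually proves more than the paper does. The paper offers no proof of this remark at all: it is asserted by analogy with the earlier remark on $\Fu_1$ and $\Gu_1$, and that earlier remark only examines the tautological extension $(f,\rho)\mapsto(f,\rho)$ and observes that the defining inequality "may not hold," without exhibiting a counterexample and without addressing other possible extensions on morphisms. Your hom-set cardinality argument closes both gaps: it rules out \emph{every} functor whose object part is $\overline{F}_2$, since an isomorphism of categories is in particular fully faithful and must therefore biject $\Hom$-sets, and cardinality of a $\Hom$-set is insensitive to how the extension is defined on morphisms. Your numerics check out: $(1,\tfrac12,\tfrac12)$ and $(1,\tfrac45,\tfrac12)$ both lie in $\GG_e$ (the inequality $2a_3\geq a_2$ holds in each case), $\Phi_{2,e}$ sends them to $(1,\tfrac12,\tfrac12)$ and $(1,\tfrac45,\tfrac15)$, and since $\gCovering$ and $\fCovering$ are full subcategories of $\Covering$ the two hom-sets have $9$ and $4$ elements respectively. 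The reduction of $\Fu_2,\Gu_2$ to $\overline{F}_2,\overline{G}_2$ via the isomorphisms $\Fu,\Gu$ of Theorem \ref{t1} is also sound; you are right that the compositions in Theorem \ref{t3} only type-check after pairing $\Fu_2$ with $\overline{G}_2$ and $\Gu_2$ with $\overline{F}_2$ (the paper's stated formulas appear to have the factors transposed). In short, your argument upgrades an unproved heuristic remark to a genuine theorem; the only cost is that it is specific to $n=3$, but one example suffices for the negative claim.
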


\section{An isomorphism between the category of coverings with $n$ sets and a subcategory of $\Partition$}

We use the notations from Section $3$. Throughout this section, $n\geq 2$ is a fixed integer. 

Let $a\in \HH$ and let $\ell\subset \mathbb R^n$ be the line perpendicular on $\Pi$ which contains $b$.
As in the proof of Proposition \ref{p1}, it follows that the orthogonal projection of $\HH$ to $\Pi$ is the map
\begin{equation}\label{psi0}
\Psi_0:\HH \to \Pi, \;\Psi_0(a)=a + \frac{1}{n} (1-\sum_{i\in[n]} a_i)\uu,
\end{equation}
where $\uu=(1,\ldots,1)$. Note that each component of the vector $\Psi_0(a)$ 
is larger or equal to $-\frac{n-2}{n}$ and smaller or equal to $1$.

We let $\PP:=\Imm(\Psi_0)$. Since $\HH$ is a convex set and $\Psi_0$ is an orthogonal projection, 
it follows that $\PP\subset \Pi$ is also convex; see \cite{hug} for further details.
 
Moreover, since $\HH$ is the convex hull of $\AAA:=\{(\alpha_1,\ldots,\alpha_n)\;:\;\alpha_i\in\{0,1\}\},$
from \eqref{psi0} it follows that $\PP$ is the convex hull of 
\begin{equation}\label{psi0A}
\Psi_0(\AAA)=\left\{ \left(\frac{1-s}{n}+\alpha_1,\ldots,\frac{1-s}{n}+\alpha_n \right)\;:\;\alpha_i\in\{0,1\},
\;s=\alpha_1+\cdots+\alpha_n \right\}.
\end{equation}

\begin{prop}\label{p3}
We have that $\PP=\Psi_0(\FF)$ and the restricted map $\Psi_0|_{\FF}:\FF \to \PP$ is bijective. Moreover, its
inverse $\Psi_0|_{\PP}^{-1}:\PP \to \FF$ has the expression:
$$\Psi_0|_{\PP}^{-1}(b) = b + (1-\bigvee_{i\in[n]}b_i)\uu,\text{ for all }b\in\PP.$$
\end{prop}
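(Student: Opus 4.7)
The plan is to realize the claimed inverse as a generalization of the map $\Phi$ from Proposition \ref{p1}, now defined on the larger polytope $\PP$ rather than only on $\KK$. Concretely, I will define $\Psi_1 : \PP \to \FF$ by $\Psi_1(b) = b + (1 - \bigvee_{i\in[n]} b_i)\uu$, verify it is well defined, and then show it is a two-sided inverse of $\Psi_0|_{\FF}$. The equality $\PP = \Psi_0(\FF)$ will then be immediate from surjectivity of $\Psi_0|_\FF$ onto $\PP$.

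The first step is a geometric observation that replaces the algebra of Proposition \ref{p1}: two points of $\mathbb R^n$ have the same orthogonal projection onto $\Pi$ if and only if their difference is a multiple of $\uu$. So for any $a\in\HH$, the point $c := a + (1 - \bigvee_{i\in[n]} a_i)\uu$ satisfies $\Psi_0(c) = \Psi_0(a)$, and a direct check gives $c_i \in [0,1]$ for all $i$ (using $0 \le a_i \le \bigvee_j a_j \le 1$) together with $\bigvee_i c_i = 1$, hence $c\in\FF$. This already proves $\Psi_0(\HH) \subset \Psi_0(\FF)$, i.e.\ $\PP = \Psi_0(\FF)$.

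Next, I check that the formula for $\Psi_1$ does not depend on the representative chosen in $\HH$. If $b = \Psi_0(a)$ with $a\in\HH$, then $b_i = a_i + \frac{1}{n}(1 - \sum_j a_j)$, so $\bigvee_{i\in[n]} b_i$ and $\bigvee_{i\in[n]} a_i$ differ by the same constant $\frac{1}{n}(1-\sum_j a_j)$, and a short substitution yields
\begin{equation*}
b + (1 - \bigvee_{i\in[n]} b_i)\uu \;=\; a + (1 - \bigvee_{i\in[n]} a_i)\uu,
\end{equation*}
which is precisely the $c\in\FF$ produced above. This shows both that $\Psi_1$ is well defined and that $\Psi_1(\Psi_0(c)) = c$ whenever $c \in \FF$ (apply the identity with $a = c$ and note $1-\bigvee_i c_i = 0$). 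For the other composition, given $b \in \PP$, the point $\Psi_1(b)$ lies on the line through $b$ perpendicular to $\Pi$, so $\Psi_0(\Psi_1(b)) = b$.

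The main potential pitfall is confirming that $\Psi_1$ genuinely lands in $\FF$ for every $b\in\PP$, not just for $b\in\KK$; here the coordinate $b_i + 1 - \bigvee_j b_j$ could a priori be negative since $\PP$ contains points with coordinates as low as $-\tfrac{n-2}{n}$. This is handled by tracing through a preimage $a\in\HH$ as above, which guarantees non-negativity automatically. Combining these three identities gives injectivity and surjectivity of $\Psi_0|_\FF$, and identifies $\Psi_1$ as its inverse, completing the proof of the proposition.
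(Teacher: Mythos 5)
Your proof is correct and follows essentially the same route as the paper: the key step in both is that translating any $a\in\HH$ along the normal direction $\uu$ by $1-\bigvee_{i\in[n]}a_i$ lands in $\FF$ without changing the projection, which gives $\PP=\Psi_0(\FF)$, and the inverse is the same formula as $\Phi$ in Proposition \ref{p1} extended from $\KK$ to $\PP$. The only difference is one of completeness: the paper leaves bijectivity as an ``easy exercise'' and cites Proposition \ref{p1} for the inverse, whereas you verify the two compositions and the well-definedness on all of $\PP$ explicitly, which is exactly the detail worth spelling out.
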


\begin{proof}
Given a point $a\in \HH$, the line orthogonal to $\Pi$ which
contains $a$, intersects $\FF$ in a point $a'$. Since $\Psi_0(a')=\Psi_0(a)$, it follows that $\PP=\Psi_0(\FF)$.
The fact that $\Psi_0|_{\FF}$ is bijective is an easy exercise. The expression of $\Psi_0|_{\PP}^{-1}$ follows from 
Proposition \ref{p1}.
\end{proof}

We recall the following well known result of convex geometry (see \cite{hug}):

\begin{lema}\label{lem1}
We let $\Gg:=\frac{1}{n}\uu = \left(\frac{1}{n},\ldots,\frac{1}{n}\right)$. 
Let $c_{\varepsilon}:\Pi \to \Pi$ be the homothety of factor $\varepsilon>0$ of $\Pi$ with
the center $g$, that is 
\begin{equation}\label{ceps}
c_{\varepsilon}(b) =  \varepsilon b + (1-\varepsilon)\Gg,\;\text{ for all }b\in \Pi.
\end{equation}
Then $c_{\varepsilon}$ is convex and bijective and its inverse is  $c_{\varepsilon}^{-1}=c_{\frac{1}{\varepsilon}}$.
\end{lema}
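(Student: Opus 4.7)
The plan is to verify the three claimed properties of $c_{\varepsilon}$ by direct computation from its defining formula \eqref{ceps}. All three facts reduce to routine affine algebra, so the proof will essentially be a matter of organizing the verifications cleanly.

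First I would check that $c_{\varepsilon}$ indeed maps $\Pi$ into $\Pi$. Since $\Gg = \tfrac{1}{n}\uu \in \Pi$ (the components sum to $1$) and $\Pi$ is the affine hyperplane $\{x_1+\cdots+x_n=1\}$, for any $b\in\Pi$ the sum of the components of $c_{\varepsilon}(b) = \varepsilon b + (1-\varepsilon)\Gg$ equals $\varepsilon \cdot 1 + (1-\varepsilon)\cdot 1 = 1$. So $c_{\varepsilon}(b)\in\Pi$, confirming $c_{\varepsilon}:\Pi\to\Pi$ is well defined.

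Next I would establish that $c_{\varepsilon}$ is convex (i.e.\ affine, preserving convex combinations): for any $b_1,b_2\in\Pi$ and $\lambda\in[0,1]$,
$$c_{\varepsilon}(\lambda b_1 + (1-\lambda) b_2) = \varepsilon(\lambda b_1 + (1-\lambda) b_2) + (1-\varepsilon)\Gg = \lambda c_{\varepsilon}(b_1) + (1-\lambda) c_{\varepsilon}(b_2),$$
where the last equality uses $\lambda + (1-\lambda)=1$ to redistribute the term $(1-\varepsilon)\Gg$. This shows $c_{\varepsilon}$ is an affine, hence convex, map.

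Finally I would verify bijectivity by showing $c_{\varepsilon}$ and $c_{1/\varepsilon}$ are mutual inverses, which is the substantive (but still routine) step. Plugging the formula into itself:
$$c_{\varepsilon}(c_{1/\varepsilon}(b)) = \varepsilon\bigl(\tfrac{1}{\varepsilon}b + (1-\tfrac{1}{\varepsilon})\Gg\bigr) + (1-\varepsilon)\Gg = b + (\varepsilon - 1)\Gg + (1-\varepsilon)\Gg = b,$$
and symmetrically $c_{1/\varepsilon}(c_{\varepsilon}(b)) = b$ by the same computation with the roles of $\varepsilon$ and $1/\varepsilon$ exchanged (legal since $\varepsilon>0$). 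Hence $c_{\varepsilon}$ is bijective with $c_{\varepsilon}^{-1}=c_{1/\varepsilon}$.

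There is no real obstacle here; every statement is immediate from the affine formula \eqref{ceps}. The only minor subtlety worth flagging in the write-up is that $c_{\varepsilon}$ only preserves the affine hyperplane $\Pi$, not the simplex $\KK$ or the projection region $\PP$: for large $\varepsilon$ the image can leave $\KK$ since components may turn negative. But the lemma is stated for $\Pi$, so this does not affect the proof.
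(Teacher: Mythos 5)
Your proof is correct: the affine identities you check (preservation of the hyperplane $\Pi$, preservation of convex combinations, and the mutual-inverse computation $c_{\varepsilon}\circ c_{1/\varepsilon}=\id$) are exactly what the statement requires. Note that the paper itself offers no argument here—it records the lemma as a well-known fact of convex geometry with a citation—so your routine verification simply supplies the omitted standard details, and your closing remark that $c_{\varepsilon}$ preserves $\Pi$ but not $\KK$ or $\PP$ for large $\varepsilon$ is an accurate observation consistent with how the paper later restricts these maps.
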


\begin{prop}\label{p4}
With the above notations, we have:
\begin{enumerate}
\item[(1)] The map 
$\Psi:\FF \to \KK,\;\Psi=c_{\frac{1}{n-1}}\circ \Psi_0|_{\FF},$
is well defined and injective.
\item[(2)] The image of $\Psi$ is the $\overline{\PP}:=$ the convex hull of \small
$$\overline{\AAA}:=\frac{1}{n-1}\cdot \left\{\left(\frac{n-1-s}{n}+\alpha_1,\ldots,\frac{n-1-s}{n}+\alpha_n \right)\;:\; 
\alpha_i\in\{0,1\},\;s=\alpha_1+\cdots+\alpha_n\right\}$$ \normalsize
\item[(3)] The inverse of $\Psi:\FF\to\overline{\PP}$ is 
$$\Psi^{-1}: \overline{\PP} \to \FF, \; \Psi^{-1}=\Psi_0|_{\FF}^{-1}\circ c_{n-1}|_{\overline{\PP}}.$$
\item[(4)] $\Psi(a)=\frac{1}{n-1}a - \frac{1}{n(n-1)}\left( \sum_{i\in[n]} a_i\right)\uu + \frac{1}{n}\uu$ for all $a\in\FF$.
\item[(5)] $\Psi^{-1}(b)=(n-1)(b-\bigvee_{i\in[n]}b_i)+\uu$ for all $b\in\overline{\PP}$.
\end{enumerate}
\end{prop}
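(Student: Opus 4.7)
The plan is to break the proof into the five parts and treat $\Psi$ as the composition prescribed in the statement, carrying out each piece from the already-established facts.

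For part (1), I would first argue that $\Psi$ maps into $\KK$. Since $\Psi_0|_{\FF}$ lands in $\PP \subset \Pi$ by Proposition \ref{p3}, and $c_{1/(n-1)}$ preserves $\Pi$ by Lemma \ref{lem1}, the composition lies in $\Pi$. It then remains to check that every coordinate is in $[0,1]$. The bound $-(n-2)/n \leq (\Psi_0(a))_i \leq 1$ noted right after \eqref{psi0} combined with the explicit formula $c_{1/(n-1)}(b)_i = \frac{1}{n-1}b_i + \frac{n-2}{n(n-1)}$ gives $0 \leq \Psi(a)_i \leq \frac{2}{n} \leq 1$ for $n\geq 2$. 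Injectivity is then immediate: $\Psi_0|_{\FF}$ is injective by Proposition \ref{p3}, and $c_{1/(n-1)}$ is bijective by Lemma \ref{lem1}.

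For part (3), I would note that $c_{n-1} = c_{1/(n-1)}^{-1}$ by Lemma \ref{lem1}, so $c_{n-1}$ sends $\overline{\PP} = c_{1/(n-1)}(\PP)$ bijectively onto $\PP$; Proposition \ref{p3} then applies so $\Psi_0|_{\FF}^{-1}$ is defined on $\PP$. The equality $\Psi \circ (\Psi_0|_{\FF}^{-1}\circ c_{n-1}|_{\overline{\PP}}) = \id$ and its reverse follow just by composing. For part (2), I would use that $\overline{\PP} = \Imm(\Psi) = c_{1/(n-1)}(\Imm(\Psi_0|_{\FF})) = c_{1/(n-1)}(\PP)$, and since $\PP$ is the convex hull of $\Psi_0(\AAA)$ in \eqref{psi0A} and $c_{1/(n-1)}$ is affine (hence carries convex hulls to convex hulls), I only need to compute $c_{1/(n-1)}$ applied to a generic element of $\Psi_0(\AAA)$:
\begin{equation*}
\frac{1}{n-1}\Bigl(\tfrac{1-s}{n}+\alpha_i\Bigr) + \frac{n-2}{n(n-1)} \;=\; \frac{1}{n-1}\Bigl(\tfrac{n-1-s}{n}+\alpha_i\Bigr),
\end{equation*}
which matches $\overline{\AAA}$ on the nose.

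Finally, parts (4) and (5) reduce to direct substitution. For (4), expand
\begin{equation*}
\Psi(a) = c_{1/(n-1)}(\Psi_0(a)) = \tfrac{1}{n-1}\bigl(a+\tfrac{1}{n}(1-\textstyle\sum a_i)\uu\bigr) + \tfrac{n-2}{n(n-1)}\uu,
\end{equation*}
and collect the $\uu$ terms as $\tfrac{n-1-\sum a_i}{n(n-1)}\uu = -\tfrac{1}{n(n-1)}(\sum a_i)\uu + \tfrac{1}{n}\uu$. For (5), expand $\Psi^{-1}(b) = \Psi_0|_{\FF}^{-1}(c_{n-1}(b))$ using $c_{n-1}(b) = (n-1)b - \tfrac{n-2}{n}\uu$ and the inverse formula from Proposition \ref{p3}; the constant $-\tfrac{n-2}{n}$ added to each coordinate shifts the max by the same amount, the two $\tfrac{n-2}{n}\uu$ terms cancel, and one is left with $(n-1)b + (1-(n-1)\bigvee b_i)\uu$, which is the stated expression.

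The only delicate point I anticipate is verifying in part (1) that the image of $\Psi$ truly lies in $\KK$ rather than merely in $\Pi$; all other steps are either compositions of previously established bijections or algebraic simplifications. The convex-hull identification in part (2) is formally clean once one observes that a homothety preserves convex hulls, so I would invoke that rather than re-prove it.
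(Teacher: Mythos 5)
Your proof is correct and follows essentially the same route as the paper: factor $\Psi$ as $c_{\frac{1}{n-1}}\circ \Psi_0|_{\FF}$, invoke Proposition \ref{p3} and Lemma \ref{lem1} for injectivity and the inverse, push the convex hull \eqref{psi0A} through the affine homothety to get (2), and obtain (4)--(5) by direct substitution. You are in fact slightly more complete than the paper on part (1), where you explicitly verify (using the coordinate bounds noted after \eqref{psi0}) that the image lands in $\KK$ and not merely in $\Pi$ --- a point the paper's proof leaves implicit.
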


\begin{proof}
(1) Since, by Proposition \ref{p3}, $\Psi_0|_{\FF}$ is bijective and, by its definition, $c_{\frac{1}{n-1}}$
is injective, it follows that $\Psi$ is injective.

(2) Since $\Psi_0|_{\HH}$ is bijective, it follows that $\Imm(\Psi)=c_{\frac{1}{n-1}}(\PP)$. The conclusion follows
from the fact that $c_{\frac{1}{n-1}}$ is a convex function and straightforward computations.

(3) It follows from Lemma \ref{lem1}.

(4) Let $a\in \HH$. From \eqref{psi0} and \eqref{ceps}, it follows that
$$\Psi(a)=c_{\frac{1}{n-1}}\left(a + \frac{1}{n} (1-\sum_{i\in[n]} a_i)\uu\right) =  
\frac{1}{n-1}\left(a + \frac{1}{n} (1-\sum_{i\in[n]} a_i)\uu\right) + \frac{n-2}{n(n-1)}\uu.$$

(5) It follows by straightforward computations from (3) and Proposition \ref{p3}.
\end{proof}

Let $c'_{\varepsilon}:\mathbb R^n \to \mathbb R^n$ be the homothety of factor $\varepsilon>0$ of $\mathbb R^n$ with
center at $\uu$, i.e.
\begin{equation}\label{cpeps}
c'_{\varepsilon}(a)=\varepsilon a+ (1-\varepsilon)\uu,\text{ for all }a\in\mathbb R^n.
\end{equation}
Similarly to $c_{\varepsilon}$, note that $c'_{\varepsilon}$ is convex and bijective and its inverse is $c'_{\frac{1}{\varepsilon}}$.

Also, for $\delta \in (0,1]$, we let $\FF_{\geq \delta}=\FF\cap [\delta,1]^n$.

\begin{lema}\label{lem2}
Let $\varepsilon\in (0,1]$ and consider $c'_{\varepsilon}|_{\FF}:\FF \to \mathbb R^n$, the restriction of $c'_{\varepsilon}$. Then:

\begin{enumerate}
\item[(1)] The image $c'_{\varepsilon}|_{\FF}$ is $\Imm(c'_{\varepsilon}|_{\FF})=\FF_{\geq 1-\varepsilon}$.
\item[(2)] $c'_{\frac{1}{\varepsilon}}|_{\FF_{\geq 1-\varepsilon}}: \FF_{\geq 1-\varepsilon} \to \FF$ is the inverse of 
           $c'_{\varepsilon}|_{\FF}:\FF \to \FF_{\geq 1-\varepsilon}$.
\end{enumerate}
\end{lema}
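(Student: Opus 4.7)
The plan is to verify both claims by direct calculation with the explicit formula $c'_{\varepsilon}(a)=\varepsilon a+(1-\varepsilon)\uu$, combined with the characterization $\FF=\{a\in[0,1]^n : \bigvee_{i\in[n]} a_i = 1\}$ and $\FF_{\geq 1-\varepsilon}=\{a\in[1-\varepsilon,1]^n : \bigvee_{i\in[n]} a_i = 1\}$.

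For (1), I would establish the two inclusions separately. First, for $\Imm(c'_{\varepsilon}|_{\FF})\subseteq \FF_{\geq 1-\varepsilon}$: given $a\in\FF$, each coordinate $(c'_{\varepsilon}(a))_i=\varepsilon a_i+(1-\varepsilon)$ lies in $[1-\varepsilon,1]$ since $a_i\in[0,1]$ and $\varepsilon\in(0,1]$; moreover, if $j\in[n]$ is any index with $a_j=1$, then $(c'_{\varepsilon}(a))_j=1$, so $c'_{\varepsilon}(a)\in\FF_{\geq 1-\varepsilon}$. For the reverse inclusion $\FF_{\geq 1-\varepsilon}\subseteq\Imm(c'_{\varepsilon}|_{\FF})$: given $b\in\FF_{\geq 1-\varepsilon}$, set $a:=c'_{1/\varepsilon}(b)$ so that $a_i=\frac{b_i-(1-\varepsilon)}{\varepsilon}$. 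Since $b_i\in[1-\varepsilon,1]$, this yields $a_i\in[0,1]$, and any $j$ with $b_j=1$ gives $a_j=1$, so $a\in\FF$ and $c'_{\varepsilon}(a)=b$ by construction.

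For (2), I would invoke the general fact (recorded in the paragraph following \eqref{cpeps}) that $c'_{\varepsilon}$ and $c'_{1/\varepsilon}$ are mutually inverse bijections of $\mathbb R^n$. The only thing left to check is that the restriction to the appropriate domain has the stated codomain, which is exactly the content of part (1) together with the symmetric computation showing $\Imm(c'_{1/\varepsilon}|_{\FF_{\geq 1-\varepsilon}})\subseteq \FF$ (already done in the reverse inclusion of (1)). The identities $c'_{\varepsilon}\circ c'_{1/\varepsilon}=\id$ and $c'_{1/\varepsilon}\circ c'_{\varepsilon}=\id$ then follow from a one-line expansion.

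There is no real obstacle here; the whole lemma is a straightforward bookkeeping exercise in the definitions of $\FF$, $\FF_{\geq 1-\varepsilon}$, and the affine map $c'_{\varepsilon}$. The only mildly delicate point is ensuring that the condition ``some coordinate equals $1$'' (i.e.\ membership in $\FF$) is preserved in both directions, which is automatic because $c'_{\varepsilon}$ fixes $\uu$ coordinatewise in the sense that $a_j=1$ iff $(c'_{\varepsilon}(a))_j=1$.
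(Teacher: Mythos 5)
Your proposal is correct and follows essentially the same route as the paper's (much terser) proof: a direct coordinate computation with $c'_{\varepsilon}(a)=\varepsilon a+(1-\varepsilon)\uu$ showing that membership in $[0,1]^n$, the bound $\geq 1-\varepsilon$, and the condition ``some coordinate equals $1$'' are preserved in both directions, with part (2) reduced to the global identity $c'_{\varepsilon}\circ c'_{1/\varepsilon}=\id$. Your write-up just fills in the bookkeeping the paper leaves as ``clear''/``obvious''.
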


\begin{proof}
(1) Indeed, if $a\in \FF$, it is clear that $a':=c'_{\varepsilon}(a)\in\HH = [0,1]^n$. Moreover, if $a_i=1$, then $a'_i=1$.
    Hence, $a'\in\FF$ as well. The injectivity of $c'_{\varepsilon}|_{\FF}$ is clear.
    It is clear that $a'_i\geq 1-\varepsilon$ for all $i\in [n]$, where $a'=c_{\varepsilon}(a)$ and $a\in\FF$ as above.
    Conversely, if $a'\in \FF_{\geq 1-\varepsilon}$, if we let $a:=c_{\frac{1}{\varepsilon}}(a')$ then $c_{\varepsilon}(a')=a$.
		
(2) It is obvious.
\end{proof}

\begin{prop}\label{diag}
The following diagram is commutative:
$$\begin{tikzcd}
 \FF \arrow{r}{c'_{\frac{1}{n-1}}} \arrow[swap]{d}{\Psi_0} & \FF_{\geq \frac{n-2}{n-1}} \arrow{d}{\Psi_0} \\%
 \PP \arrow{r}{c_{\frac{1}{n-1}}}& \overline{\PP}
 \end{tikzcd}$$
Moreover, all the above maps are bijective. (Of course, we implicitly consider the appropriate restrictions)
\end{prop}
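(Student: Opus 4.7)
The plan is to verify commutativity of the square by a direct calculation using the closed forms \eqref{psi0}, \eqref{ceps} and \eqref{cpeps}, and then to deduce bijectivity of the fourth arrow formally from bijectivity of the other three arrows, all of which are already available from earlier in the paper.

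For commutativity, I would fix $a\in\FF$ and compute the two compositions at $a$. Each of the four maps is affine, so each output has the shape $\tfrac{1}{n-1}a+\lambda\,\uu$ for a scalar $\lambda$ depending only on $\sum_{i\in[n]}a_i$; the whole check reduces to confirming that the $\uu$-coefficients on the two sides coincide. For $c_{\frac{1}{n-1}}\circ\Psi_0|_{\FF}$, applying the definitions gives coefficient $\tfrac{1-\sum_{i}a_i}{n(n-1)}+\tfrac{n-2}{n(n-1)}$, which collapses to $\tfrac{n-1-\sum_{i}a_i}{n(n-1)}$. For $\Psi_0\circ c'_{\frac{1}{n-1}}|_{\FF}$, using $\sum_{i\in[n]}\uu_i=n$ to compute $\sum_i c'_{\frac{1}{n-1}}(a)_i=\tfrac{1}{n-1}\sum_i a_i+\tfrac{n(n-2)}{n-1}$, the $\uu$-coefficient turns out to be $\tfrac{n-2}{n-1}+\tfrac{1}{n}\bigl(1-\tfrac{1}{n-1}\sum_i a_i-\tfrac{n(n-2)}{n-1}\bigr)$, which simplifies to the same $\tfrac{n-1-\sum_{i}a_i}{n(n-1)}$. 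The two compositions therefore agree on every $a\in\FF$.

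For bijectivity, the top arrow $c'_{\frac{1}{n-1}}|_{\FF}:\FF\to\FF_{\geq\frac{n-2}{n-1}}$ is bijective by Lemma \ref{lem2}(1) with $\varepsilon=\tfrac{1}{n-1}$ (so $1-\varepsilon=\tfrac{n-2}{n-1}$), the left vertical arrow $\Psi_0|_{\FF}:\FF\to\PP$ is bijective by Proposition \ref{p3}, and the bottom arrow $c_{\frac{1}{n-1}}|_{\PP}:\PP\to\overline{\PP}$ is injective by Lemma \ref{lem1} and surjects onto $\overline{\PP}$ because $\overline{\PP}=\Imm(\Psi)=c_{\frac{1}{n-1}}(\PP)$, as shown in the proof of Proposition \ref{p4}(2). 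Commutativity then identifies the right vertical map with the composition $c_{\frac{1}{n-1}}|_{\PP}\circ\Psi_0|_{\FF}\circ\bigl(c'_{\frac{1}{n-1}}|_{\FF}\bigr)^{-1}$ of three bijections, hence it too is a bijection.

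I do not expect any structural obstacle; the one point requiring care is the arithmetic collapse of the two $\uu$-coefficients to the common value $\tfrac{n-1-\sum_{i}a_i}{n(n-1)}$, and this is entirely routine once it is observed that all four maps are affine and that Proposition \ref{p4}(2) already identifies $\overline{\PP}$ with $c_{\frac{1}{n-1}}(\PP)$.
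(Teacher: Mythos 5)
Your proposal is correct and follows essentially the same route as the paper: commutativity is checked by computing both composites at a point of $\FF$ (the paper shortcuts one side via Proposition \ref{p4}(4), which encodes the same affine calculation, and treats the other side with \eqref{psi0}), and bijectivity rests on Lemma \ref{lem1}, Proposition \ref{p3} and Lemma \ref{lem2}. Your explicit identification of the right vertical arrow as $c_{\frac{1}{n-1}}|_{\PP}\circ\Psi_0|_{\FF}\circ\bigl(c'_{\frac{1}{n-1}}|_{\FF}\bigr)^{-1}$ is a slightly more careful spelling-out of what the paper leaves to the cited results, and the arithmetic (both $\uu$-coefficients equal to $\tfrac{n-1-\sum_i a_i}{n(n-1)}$) checks out.
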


\begin{proof}
Let $a\in\FF$. From Proposition \ref{p4}(4), we have
$$c_{\frac{1}{n-1}}(\Psi_0(a))=\Psi(a)=\frac{1}{n-1}a - \frac{1}{n(n-1)}\left( \sum_{i\in[n]} a_i\right)\uu + \frac{1}{n}\uu.$$
On the other hand, we have
$$\Psi_0(c'_{\frac{1}{n-1}}(a))=\Psi_0\left(\frac{1}{n-1}a + \frac{n-2}{n-1}\uu\right),$$
hence, using \eqref{psi0}, we get $c_{\frac{1}{n-1}}(\Psi_0(a))=\Psi_0(c'_{\frac{1}{n-1}}(a))$, so the diagram is commutative.
The bijectivity of the maps follows from Lemma \ref{lem1}, Proposition \ref{p3} and Lemma \ref{lem2}.
\end{proof}

\begin{dfn}
Let $\delta\in[0,1)$. We define the category $\Covering_{\geq \delta}$ as the full subcategory of $\Covering$
whose objects $(X,(A_i)_{i\in I})$ satisfy the condition
$$ A_i(x)\geq \delta\text{ for all }i\in I,x\in X.$$
Let $\varepsilon\in (0,1]$. We define the functor $\mathbf{C}_{\varepsilon}: \Covering \to \Covering_{\geq 1-\varepsilon}$ as follows:
\begin{enumerate}
\item[(1)] On objects: $\mathbf{C}_{\varepsilon}((X,(A_i)_{i\in I})):=(X,(B_i)_{i\in I})$, where
$$ B_i(x)=\varepsilon A_i(x)+1-\varepsilon,\;\text{ for all }i\in I,x\in X.$$
\item[(2)] On morphisms: $\mathbf{C}_{\varepsilon}((f,\rho)):=(f,\rho)$.
\end{enumerate}
Also, we define the functor $\mathbf{D}_{\varepsilon}:\Covering_{\geq 1-\varepsilon} \to \Covering$ as follows:
\begin{enumerate}
\item[(1)] On objects: $\mathbf{D}_{\varepsilon}((X,(B_i)_{i\in I}):=((X,(A_i)_{i\in I})$, where
$$ A_i(x)=\frac{1}{\varepsilon} B_i(x)+1-\frac{1}{\varepsilon},\;\text{ for all }i\in I,x\in X.$$
\item[(2)] On morphisms: $\mathbf{D}_{\varepsilon}((f,\rho)):=(f,\rho)$.
\end{enumerate}
\end{dfn}

With the above notations, we have:

\begin{prop}\label{c_eps}
The functors $\mathbf{C}_{\varepsilon}$ and $\mathbf{D}_{\varepsilon}$ are well defined and fully faithful.
Moreover, they induce an isomorphism of categories between $\Covering$ and  $\Covering_{\geq 1-\varepsilon}$.
\end{prop}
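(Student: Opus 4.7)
\medskip

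\noindent\textbf{Proof plan for Proposition \ref{c_eps}.}
The plan is to verify each assertion by tracking the elementary affine change of variables $t\mapsto \varepsilon t+1-\varepsilon$ through the definitions of objects, morphisms, and composition. This map is the restriction to $[0,1]$ of the homothety $c'_{\varepsilon}$ of Lemma \ref{lem2}, so the bijectivity content is essentially packaged already by that lemma.

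First I would check that $\mathbf{C}_{\varepsilon}$ is well defined on objects. For any covering $(X,(A_i)_{i\in I})$ and the associated $B_i(x)=\varepsilon A_i(x)+1-\varepsilon$, the fact that $\varepsilon\in(0,1]$ and $A_i(x)\in[0,1]$ yields $B_i(x)\in[1-\varepsilon,1]$, so each $B_i$ is a fuzzy set with $B_i(x)\geq 1-\varepsilon$. Moreover, picking $i$ with $A_i(x)=1$ gives $B_i(x)=1$, so $(X,(B_i)_{i\in I})$ is indeed an object of $\Covering_{\geq 1-\varepsilon}$. The analogous verification for $\mathbf{D}_{\varepsilon}$ is symmetric: the hypothesis $B_i(x)\geq 1-\varepsilon$ is exactly what is needed to keep $A_i(x)=\frac{1}{\varepsilon}B_i(x)+1-\frac{1}{\varepsilon}\in[0,1]$, and $B_i(x)=1$ forces $A_i(x)=1$, so $(X,(A_i)_{i\in I})$ is a covering.

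Next I would handle morphisms. Since the affine map $t\mapsto\varepsilon t+1-\varepsilon$ is strictly increasing (as $\varepsilon>0$), the inequality $A_i(x)\leq A'_{\rho(i)}(f(x))$ is equivalent to $B_i(x)\leq B'_{\rho(i)}(f(x))$, and similarly for the inverse map. Consequently, a pair $(f,\rho)$ is a morphism in $\Covering$ if and only if it is a morphism between the $\mathbf{C}_{\varepsilon}$-images in $\Covering_{\geq 1-\varepsilon}$; this yields the bijection on $\Hom$ sets, giving full faithfulness of both functors at once. Functoriality (preservation of identities and composition) is immediate because both functors act as the identity on morphisms.

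Finally, to obtain the categorical isomorphism I would verify the two compositional identities $\mathbf{D}_{\varepsilon}\circ\mathbf{C}_{\varepsilon}=\Id_{\Covering}$ and $\mathbf{C}_{\varepsilon}\circ\mathbf{D}_{\varepsilon}=\Id_{\Covering_{\geq 1-\varepsilon}}$. On objects this is the one-line calculation $\frac{1}{\varepsilon}(\varepsilon A_i(x)+1-\varepsilon)+1-\frac{1}{\varepsilon}=A_i(x)$ and its mirror; on morphisms it is trivial. There is no real obstacle in this proof: the only point that requires a moment's attention is the compatibility between the range restriction $B_i(x)\geq 1-\varepsilon$ and the invertibility of the affine substitution, which is precisely what Lemma \ref{lem2} guarantees and which motivates the choice of codomain $\Covering_{\geq 1-\varepsilon}$.
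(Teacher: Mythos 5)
Your proposal is correct and follows essentially the same route as the paper's proof: direct verification on objects, the affine monotonicity argument for the equivalence of the morphism conditions (hence full faithfulness, since both functors are the identity on $(f,\rho)$), and the one-line check of the two composite identities. Your side remark invoking Lemma \ref{lem2} is inessential (that lemma concerns $\FF\subset[0,1]^n$, whereas $\Covering$ allows arbitrary index sets), but your direct elementary argument does not depend on it, so nothing is lost.
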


\begin{proof}
Let $(X,(A_i)_{i\in I})\in\Ob(\Covering)$.
It is easy to see that $$\mathbf{C}_{\varepsilon}((X,(A_i)_{i\in I}))=  ((X,(B_i)_{i\in I})\in \Ob(\Covering_{\geq 1-\varepsilon}).$$ 
Now, assume that 
$(f,\rho):(X,(A_i)_{i\in I})\to (Y,(A'_j)_{j\in J})$ is a morphism in $\Covering$, i.e.
$$A_i(x)\leq A'_{\rho(i)}(f(x)),\text{ for all }i\in I, x\in X.$$
It follows that $B_i(x)=\varepsilon A_i(x) + 1-\varepsilon \leq \varepsilon A'_{\rho(i)}(f(x)) + 1 - \varepsilon = B'_{\rho(i)}(f(x)),$
where $(Y,(B'_j)_{j\in J})=\mathbf{C}_{\varepsilon}((Y,(A'_j)_{j\in J}))$. 
Therefore, $\mathbf{C}_{\varepsilon}((f,\rho))=(f,\rho)$ is a morphism in
$\Covering_{\geq 1-\varepsilon}$. Hence $\mathbf{C}_{\varepsilon}$ is a functor.
Also, it is clear that $\mathbf{C}_{\varepsilon}$ is fully faithful.

Similarly, one can easily check that $\mathbf{D}_{\varepsilon}$ is also a fully faithful functor,
$\mathbf{C}_{\varepsilon}\circ \mathbf{D}_{\varepsilon} = \id_{\Ob(\Covering_{\geq 1-\varepsilon})}$ and
$\mathbf{D}_{\varepsilon}\circ \mathbf{C}_{\varepsilon} = \id_{\Ob(\Covering)}$. Thus, the proof is complete.
\end{proof}

\begin{dfn}\label{covn}
We define the categories:
\begin{enumerate}
\item[(1)] $\Covering[n]$, as the full subcategory of $\Covering$ whose objects are coverings with $n$ fuzzy sets.
\item[(2)] $\Covering[n]_{\geq \frac{n-2}{n-1}}$, as the full subcategory of $\Covering[n]$ whose objects are coverings 
           from $\Covering_{\geq \frac{n-2}{n-1}}$.
\item[(3)] $\Partition[n]$, as the full subcategory of $\Partition$ whose objects are partitions with $n$ fuzzy sets.
\item[(4)] $\Partition[n]_{\overline{\PP}}$, as the full subcategory of $\Partition$ whose objects are $(X,(B_i)_{i\in[n]})$
           such that $(B_1(x),\ldots,B_n(x))\in \overline{\PP}$ for all $x\in X$.
\end{enumerate}
\end{dfn}

\begin{cor}
With the above notations, $\mathbf{C}_{\frac{1}{n-1}}:\Covering[n] \to \Covering[n]_{\geq \frac{n-2}{n-1}}$ is an isomorphism of categories.
\end{cor}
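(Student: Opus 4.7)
The plan is to deduce this corollary directly from Proposition \ref{c_eps} by specializing $\varepsilon = \frac{1}{n-1}$ (so that $1-\varepsilon = \frac{n-2}{n-1}$) and then restricting to the full subcategories of coverings with exactly $n$ fuzzy sets. The key observation that makes this restriction legitimate is that the functors $\mathbf{C}_{\varepsilon}$ and $\mathbf{D}_{\varepsilon}$ act as the identity on the index-set component of an object and only rescale the membership values.

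First, I would apply Proposition \ref{c_eps} with $\varepsilon = \frac{1}{n-1}$ to obtain mutually inverse fully faithful functors
\[
\mathbf{C}_{\frac{1}{n-1}} : \Covering \longrightarrow \Covering_{\geq \frac{n-2}{n-1}}, \qquad \mathbf{D}_{\frac{1}{n-1}} : \Covering_{\geq \frac{n-2}{n-1}} \longrightarrow \Covering,
\]
satisfying $\mathbf{D}_{\frac{1}{n-1}} \circ \mathbf{C}_{\frac{1}{n-1}} = \id_{\Ob(\Covering)}$ and $\mathbf{C}_{\frac{1}{n-1}} \circ \mathbf{D}_{\frac{1}{n-1}} = \id_{\Ob(\Covering_{\geq \frac{n-2}{n-1}})}$.

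Next, I would observe from the definitions that for any covering $(X,(A_i)_{i\in I})$ the image $\mathbf{C}_{\varepsilon}((X,(A_i)_{i\in I})) = (X,(B_i)_{i\in I})$ is indexed by the same set $I$; the same is true for $\mathbf{D}_{\varepsilon}$. Taking $I = [n]$, we conclude that $\mathbf{C}_{\frac{1}{n-1}}$ sends objects of $\Covering[n]$ to objects of $\Covering[n]_{\geq \frac{n-2}{n-1}}$ and $\mathbf{D}_{\frac{1}{n-1}}$ sends objects of $\Covering[n]_{\geq \frac{n-2}{n-1}}$ to objects of $\Covering[n]$. On morphisms both functors act as $(f,\rho) \mapsto (f,\rho)$, so they trivially restrict as well.

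Finally, since $\Covering[n]$ and $\Covering[n]_{\geq \frac{n-2}{n-1}}$ are full subcategories (of $\Covering$ and $\Covering_{\geq \frac{n-2}{n-1}}$ respectively, per Definition \ref{covn}), fullness and faithfulness are inherited from Proposition \ref{c_eps}, and the identities $\mathbf{D}_{\frac{1}{n-1}} \circ \mathbf{C}_{\frac{1}{n-1}} = \id$ and $\mathbf{C}_{\frac{1}{n-1}} \circ \mathbf{D}_{\frac{1}{n-1}} = \id$ restrict verbatim to the subcategories. The conclusion follows. There is no real obstacle here; the statement is essentially the observation that $\mathbf{C}_{\varepsilon}$ does not alter the cardinality of the family of fuzzy sets, so the isomorphism of Proposition \ref{c_eps} respects the stratification by number of sets.
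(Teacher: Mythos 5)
Your proposal is correct and follows the same route as the paper, which simply cites Proposition \ref{c_eps}; your additional observation that $\mathbf{C}_{\varepsilon}$ and $\mathbf{D}_{\varepsilon}$ preserve the index set, so the isomorphism restricts to the full subcategories of coverings with exactly $n$ sets, is exactly the implicit content of that citation.
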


\begin{proof}
It follows from Proposition \ref{c_eps}.
\end{proof}

\begin{dfn}
We define the functor $\mathbf{F[n]}:\Covering[n] \to \Partition[n]_{\overline{\PP}}$, as follows:
\begin{enumerate}
\item[(1)] On objects: $\mathbf{F[n]}((X,(A_i)_{i\in[n]})):=(X,(B_i)_{i\in [n]})$, where 
      $$B_i(x)=\frac{1}{n-1}A_i(x) - \frac{1}{n(n-1)}\left( \sum_{i\in[n]} A_i(x)\right) + \frac{1}{n}\text{ for all }i\in[n],x\in X.$$
\item[(2)] On morphisms: $\mathbf{F[n]}((f,\rho)):=(f,\rho)$.
\end{enumerate}
We also define the functor $\mathbf{G[n]}:\Partition[n]_{\overline{\PP}} \to \Covering[n]$, as follows:
\begin{enumerate}
\item[(1)] On objects: $\mathbf{G[n]}((X,(B_i)_{i\in[n]})):=(X,(A_i)_{i\in [n]})$, where 
      $$A_i(x) = (n-1)(B_i(x)-\bigvee_{i\in[n]}B_i(x))+1\text{ for all }i\in[n],x\in X.$$
\item[(2)] On morphisms: $\mathbf{G[n]}((f,\rho)):=(f,\rho)$.
\end{enumerate}
\end{dfn}

\begin{teor}\label{t2}
With the above notations, the functors $\mathbf{F[n]}$ and $\mathbf{G[n]}$ are well defined and 
and fully faithful. Moreover, they induce an isomorphism of categories between $\Covering[n]$ and 
$\Partition[n]_{\overline{\PP}}$.
\end{teor}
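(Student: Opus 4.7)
The plan is to recognize that, at the object level, the functors $\mathbf{F[n]}$ and $\mathbf{G[n]}$ are precisely the pointwise applications of the bijection $\Psi:\FF\to\overline{\PP}$ from Proposition~\ref{p4}(4) and its inverse $\Psi^{-1}:\overline{\PP}\to\FF$ from Proposition~\ref{p4}(5). This makes the object-level bijectivity and the composition identities essentially free, so the real content is verifying well-definedness on objects and morphisms, and checking morphism compatibility in both directions.

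First I would check well-definedness on objects. If $(X,(A_i)_{i\in[n]})\in\Ob(\Covering[n])$, then for each $x\in X$ the tuple $(A_1(x),\ldots,A_n(x))$ lies in $\FF$, so by Proposition~\ref{p4}(2) its $\Psi$-image lies in $\overline{\PP}\subset\KK$, giving an object of $\Partition[n]_{\overline{\PP}}$. Symmetrically, for $(X,(B_i)_{i\in[n]})\in\Ob(\Partition[n]_{\overline{\PP}})$, Proposition~\ref{p4}(3) guarantees that $\Psi^{-1}$ sends its values into $\FF$, so $\bigvee_{i\in[n]} A_i(x)=1$ for all $x$ and we recover an object of $\Covering[n]$.

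The crucial step is morphism compatibility. The key identity, valid in the covering regime (that is, when $\bigvee_{k\in[n]}A_k(x)=1$), is
\[
B_i(x) - \bigvee_{k\in[n]}B_k(x) \;=\; \frac{A_i(x)-1}{n-1},
\]
which I would check by direct substitution into the defining formula of $\mathbf{F[n]}$. Combined with Remark~\ref{gigi}, which rewrites the $\Covering$ morphism condition as $A_i(x)-\bigvee_k A_k(x)\leq A'_{\rho(i)}(f(x))-\bigvee_j A'_j(f(x))$, this identity shows that $(f,\rho)$ satisfies the $\Covering[n]$ morphism condition if and only if it satisfies the $\Partition$ morphism condition on the images under $\mathbf{F[n]}$. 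The same identity, read in the other direction (using that $\mathbf{G[n]}$ produces covering tuples, so again $\bigvee_i A_i(x)=1$), yields the morphism compatibility for $\mathbf{G[n]}$.

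With these pieces in place, the rest is formal: both functors act as the identity on pairs $(f,\rho)$, and the morphism equivalence above gives a bijection on hom sets, i.e.\ full faithfulness, while the identities $\Psi^{-1}\circ\Psi=\Id_{\FF}$ and $\Psi\circ\Psi^{-1}=\Id_{\overline{\PP}}$ from Proposition~\ref{p4} lift pointwise to $\mathbf{G[n]}\circ\mathbf{F[n]}=\Id_{\Ob(\Covering[n])}$ and $\mathbf{F[n]}\circ\mathbf{G[n]}=\Id_{\Ob(\Partition[n]_{\overline{\PP}})}$. I expect the main obstacle to be exactly the morphism compatibility: the quantity $\bigvee_{k\in[n]}B_k(x)$ in the $\Partition$ morphism condition is non-linear in $B$ and does not a priori behave well under $\Psi$, but the covering identity $\bigvee_k A_k(x)=1$ collapses it cleanly to a scalar and makes the two conditions coincide. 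This is also the reason the partition side must be restricted to $\Partition[n]_{\overline{\PP}}$ rather than all of $\Partition[n]$, since it is precisely on $\overline{\PP}$ that $\Psi^{-1}$ is defined and preserves the rigidity needed for the equivalence.
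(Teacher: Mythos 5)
Your proposal is correct and follows essentially the same route as the paper: well-definedness on objects via Proposition \ref{p4}, and morphism compatibility via the identity $B_i(x)-\bigvee_{k\in[n]}B_k(x)=\frac{1}{n-1}(A_i(x)-1)$, which is exactly the computation in the paper's proof, with the composition identities lifted pointwise from $\Psi^{-1}\circ\Psi=\Id$ and $\Psi\circ\Psi^{-1}=\Id$. Your explicit remark that the covering condition $\bigvee_k A_k(x)=1$ is what linearizes the $\bigvee$ term is a nice clarification of why the equivalence of morphism conditions holds, but it does not change the argument.
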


\begin{proof}
The fact that $\mathbf{F[n]}$ is well defined on the objects of $\Covering[n]$ follows from Proposition \ref{p4}.
Now, let $(f,\rho):(X,(A_i)_{i\in[n]}) \to (Y,(A'_i)_{i\in[n]})$ be a morphism in $\Covering[n]$, that is 
$$A_i(x)\leq A_{\rho(i)}(f(x))\text{ for all }i\in[n],x\in X.$$
Let $(X,(B_i)_{i\in[n]}):=\mathbf{F[n]}((X,(A_i)_{i\in[n]}))$ and $(Y,(B'_i)_{i\in[n]}):=\mathbf{F[n]}((Y,(A'_i)_{i\in[n]}))$.
Then $$B_i(x)-\bigvee_{i\in [n]}B_i(x) = \frac{1}{n-1}(A_i(x) - 1) \leq \frac{1}{n-1}(A_{\rho(i)}(f(x)) - 1) = 
B_{\rho(i)}(f(x))-\bigvee_{i\in[n]} B_i(f(x)),$$
hence $(f,\rho):(X,(B_i)_{i\in[n]}) \to (Y,(B'_i)_{i\in[n]})$ is a morphism in $\Partition[n]_{\overline{\PP}}$.
Thus $\mathbf{F[n]}$ is a functor. Also, it is clear that $\mathbf{F[n]}$ is fully faithful.

Similarly, one can easily check that $\mathbf{G[n]}$ is also a fully faithful functor,
$\mathbf{G[n]}\circ \mathbf{F[n]} = \id_{\Ob(\Covering[n])}$ and
$\mathbf{F[n]}\circ \mathbf{G[n]} = \id_{\Ob(\Partition[n]_{\overline{\PP}})}$. Thus, the proof is complete.
\end{proof}

\section{A third bijection between partitions and finite coverings}

We use the notations from the previous section.
Let $b\in\PP\setminus\{g\}$ and consider the line $\ell$
determined by $g$ and $b$. 
We define 
$$\alpha(b):=\max\{\alpha\geq 0\;:\;\alpha\cdot (b-g)+g\in\PP\}.$$
Note that $\alpha(b)\cdot (b-g)+g \in\partial \PP$, where $\partial \PP$ is the border of $\PP$. Indeed, this follows
from the fact that $\alpha(b)\cdot (b-g)+g \in \ell\cap \PP$, $\PP$ is convex and the definition of $\alpha(b)$.

\begin{lema}\label{alfa}
With the above notations, we have $$\alpha(b)=\frac{1}{\bigvee_{i\in[n]}b_i - \bigwedge_{i\in[n]}b_i}.$$
\end{lema}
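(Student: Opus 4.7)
The plan is to describe $\PP$ explicitly as a subset of $\Pi$ and then reduce the computation of $\alpha(b)$ to a one-variable optimization along the ray from $g$ through $b$. First I would observe that for any $y\in\Pi$, the fiber $\Psi_0^{-1}(y)$ is the line $\{y+t\uu : t\in\mathbb R\}$, because $\uu$ is normal to $\Pi$ and $\Psi_0$ is the orthogonal projection onto $\Pi$. Hence $y\in\PP=\Psi_0(\HH)$ iff there exists $t\in\mathbb R$ with $y+t\uu\in[0,1]^n$, i.e.\ $-y_i\leq t\leq 1-y_i$ for all $i$. Such a $t$ exists iff $-\bigwedge_{i\in[n]}y_i \leq 1 - \bigvee_{i\in[n]}y_i$. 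This yields the characterization
\begin{equation*}
\PP=\Bigl\{y\in\Pi\;:\;\bigvee_{i\in[n]}y_i-\bigwedge_{i\in[n]}y_i\leq 1\Bigr\}.
\end{equation*}

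Next I would parameterize the ray from $g=\frac{1}{n}\uu$ through $b$ as $\gamma(t):=g+t(b-g)$ for $t\geq 0$. Since both $g$ and $b$ lie on $\Pi$, so does every $\gamma(t)$, so by the previous step membership in $\PP$ is governed only by the spread of the coordinates. Because $t\geq 0$, the suprema and infima commute with the affine map $\gamma$:
\begin{equation*}
\bigvee_{i\in[n]}\gamma(t)_i=\tfrac{1}{n}+t\bigl(\bigvee_{i\in[n]}b_i-\tfrac{1}{n}\bigr),\qquad
\bigwedge_{i\in[n]}\gamma(t)_i=\tfrac{1}{n}+t\bigl(\bigwedge_{i\in[n]}b_i-\tfrac{1}{n}\bigr).
\end{equation*}
Subtracting gives
\begin{equation*}
\bigvee_{i\in[n]}\gamma(t)_i-\bigwedge_{i\in[n]}\gamma(t)_i
=t\Bigl(\bigvee_{i\in[n]}b_i-\bigwedge_{i\in[n]}b_i\Bigr).
\end{equation*}

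By the characterization of $\PP$, the condition $\gamma(t)\in\PP$ is equivalent to $t(\bigvee_{i\in[n]}b_i-\bigwedge_{i\in[n]}b_i)\leq 1$. Since $b\in\PP\setminus\{g\}$ and $b\in\Pi$ with $\sum b_i=1$, the coordinates of $b$ are not all equal to $1/n$, so the denominator $\bigvee_{i\in[n]}b_i-\bigwedge_{i\in[n]}b_i$ is strictly positive. Taking the maximum admissible $t$ yields $\alpha(b)=\frac{1}{\bigvee_{i\in[n]}b_i-\bigwedge_{i\in[n]}b_i}$, as claimed.

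The only delicate point is the description of $\PP$ in the first step; once that is in place, the rest is a direct calculation exploiting the fact that the ray stays on $\Pi$ and that $t\geq 0$ preserves the ordering of coordinates. I would expect the referee to accept that as an application of the standard fact (already invoked from \cite{hug}) that the orthogonal projection of a convex set is convex, combined with the explicit formula for $\Psi_0$ given in \eqref{psi0}.
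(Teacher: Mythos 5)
Your proof is correct and follows essentially the same route as the paper: both arguments decide when a point of the ray $g+t(b-g)$ lies in $\PP$ by lifting it back to the cube $\HH$ along the normal direction $\uu$, which produces the same linear constraint $t\bigl(\bigvee_{i\in[n]}b_i-\bigwedge_{i\in[n]}b_i\bigr)\leq 1$. The only difference is organizational: you first record the explicit description $\PP=\{y\in\Pi:\bigvee_{i\in[n]}y_i-\bigwedge_{i\in[n]}y_i\leq 1\}$, whereas the paper plugs the ray point into the formula for $\Psi_0|_{\PP}^{-1}$ from Proposition \ref{p3} and imposes nonnegativity of the resulting coordinates.
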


\begin{proof}
First, note that $\alpha(b)\geq 1$. Let $\alpha\geq 1$ such that $\alpha\cdot (b-g)+g\in\PP$.
We consider the inverse of the map $\Psi_0$, defined in \eqref{psi0}, namely $\Psi_0^{-1}:\PP\to\HH$.
As in Proposition \ref{p3}, we have
$$\Psi_0^{-1}(b)=b+(1-\bigvee_{i\in[n]}b_i)\uu.$$
It follows that
$$\Psi_0^{-1}(\alpha(b-g)+g)=\alpha\cdot (b-g)+g + (1-\alpha\bigvee_{i\in[n]}b_i - \frac{1}{n}(1-\alpha))\uu \in \FF.$$
Therefore, for all $i\in [n]$, we have
\begin{equation}\label{alpha}
 \alpha\cdot b_i - \alpha\bigvee_{i\in[n]}b_i + 1 \geq 0.
\end{equation}
From \eqref{alpha} it follows that $\alpha\leq \frac{1}{\bigvee_{i\in[n]} b_i - b_i}$ for all $i\in[n]$ with $b_i<\bigvee_{i\in[i]}b_i$.
The largest value of $\alpha$ which satisfies these conditions is
$$\alpha(b)=\frac{1}{\bigvee_{i\in[n]}b_i - \bigwedge_{i\in[n]}b_i},$$
as required.
\end{proof}

We let $\beta(b):=\max\{\beta\geq 0\;:\;\beta\cdot (b-g)+g\in\KK\}$. Note also that $\beta(b)(b-g)+g\in\partial\KK$. 

\begin{lema}\label{beta}
With the above notations, we have $$\beta(b)=\frac{1}{1-n\bigwedge_{i\in[n]}b_i}.$$
\end{lema}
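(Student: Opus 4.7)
The plan is to read off $\beta(b)$ directly from the coordinate constraints defining $\KK$, by exploiting the fact that the line through $g$ and $b$ already lies in the hyperplane $\Pi$.

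First I would observe that since $b\in\PP\subset\Pi$ and $g\in\Pi$, the entire affine line $\{\beta(b-g)+g : \beta\in\mathbb{R}\}$ is contained in $\Pi$. Indeed, the components of $b-g$ sum to $1-1=0$, so the components of $\beta(b-g)+g$ sum to $1$ for every $\beta$. Consequently, the only remaining condition for $\beta(b-g)+g$ to belong to $\KK=\HH\cap\Pi$ is that each component be nonnegative (the upper bound $\leq 1$ is then automatic).

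Next I would translate this into inequalities on $\beta$. Writing out the $i$-th component, the condition becomes
\begin{equation*}
\beta(b_i - \tfrac{1}{n}) + \tfrac{1}{n} \geq 0, \qquad i\in[n].
\end{equation*}
For indices $i$ with $b_i \geq \frac{1}{n}$ this holds for every $\beta\geq 0$, so such indices impose no constraint. For indices $i$ with $b_i < \frac{1}{n}$ we have $\frac{1}{n}-b_i>0$ and the inequality rearranges to $\beta \leq \frac{1}{1-nb_i}$. Since $b\neq g$ and $\sum b_i=1$, at least one $b_i$ is strictly smaller than $\frac{1}{n}$, so the set of such indices is nonempty.

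Finally I would take the minimum of the upper bounds. Because the function $t\mapsto\frac{1}{1-nt}$ is increasing in $t$ on $(-\infty,\tfrac{1}{n})$, the tightest bound comes from the \emph{smallest} coordinate, namely $t=\bigwedge_{i\in[n]} b_i$. Therefore
\begin{equation*}
\beta(b) = \frac{1}{1 - n\bigwedge_{i\in[n]} b_i},
\end{equation*}
as claimed. The only subtlety is noting that the hypothesis $b\in\PP\setminus\{g\}$ guarantees $\bigwedge_{i\in[n]} b_i < \frac{1}{n}$, so the denominator is strictly positive and $\beta(b)$ is well defined and finite; I do not anticipate any serious obstacle beyond this bookkeeping, since the argument reduces to a one-variable linear inequality after projecting onto $\Pi$.
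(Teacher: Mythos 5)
Your proof is correct and follows essentially the same route as the paper: both reduce membership of $\beta\cdot(b-g)+g$ in $\KK$ to the coordinatewise inequalities $\beta\left(b_i-\frac{1}{n}\right)+\frac{1}{n}\geq 0$ and read off the binding constraint at the smallest coordinate $\bigwedge_{i\in[n]}b_i$. Your write-up is in fact a bit more careful than the paper's (explaining why the hyperplane condition and the upper bounds $\leq 1$ are automatic, and why $b\neq g$ forces some $b_i<\frac{1}{n}$ so the maximum is finite), whereas the paper's extra preliminary bound $\beta(b)\geq\frac{1}{n-1}$ is not needed for the computation.
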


\begin{proof}
Note that $\beta(b)\geq \frac{1}{n-1}$. Indeed, if $b\in\PP$ then $b_i \in [-\frac{n-2}{n},1]$ for all $i\in [n]$.
By straightforward computations, it follows that the coordinates of $d:=\frac{1}{n-1}(b-g)+g$ are nonzero. 
Since $d\in \PP\subset\Pi$, it follows that $d\in\KK$.

Let $\beta\geq \frac{1}{n-1}$ such that $\beta\cdot (b-g)+g\in\KK$. It follows that
$$\beta\cdot \left(b_i-\frac{1}{n}\right)+\frac{1}{n}\geq 0\text{ for all }i\in[n].$$
The largest value of $\beta$ which satisfies these conditions is
$$\beta(b) = \frac{\frac{1}{n}}{\frac{1}{n}-\bigwedge_{i\in[n]}b_i} = \frac{1}{1-n\bigwedge_{i\in[n]}b_i},$$
as required.
\end{proof}

\begin{prop}\label{p5}
With the above notations, the map 
$$\psi:\PP \to \KK,\; \psi(b)= \begin{cases} \frac{\beta(b)}{\alpha(b)}(b-g)+g,& b\neq g \\ g,& b=g  \end{cases} = 
\begin{cases} \frac{\bigvee_{i\in[n]}b_i - \bigwedge_{i\in[n]}b_i}{1-n\bigwedge_{i\in[n]}b_i}(b-g)+g,& b\neq g \\ g,& b=g \end{cases},$$ 
is bijective. Its inverse is $\psi^{-1}:\KK\to\PP$, defined by
$$ \psi^{-1}(b')= \begin{cases} \frac{1-n\bigwedge_{i\in[n]}b'_i}{\bigvee_{i\in[n]}b'_i - \bigwedge_{i\in[n]}b'_i}(b'-g)+g,& b'\neq g \\ g,& b'=g \end{cases}.$$
\end{prop}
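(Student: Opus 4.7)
My plan is to view $\psi$ as a \emph{radial rescaling} centered at $g$: on each open ray emanating from $g$ through a point of $\PP\setminus\{g\}$, the map $\psi$ stretches or shrinks the segment from $g$ to $\partial \PP$ onto the corresponding segment from $g$ to $\partial \KK$, preserving the relative radial fraction. Concretely, the points $\alpha(b)(b-g)+g$ and $\beta(b)(b-g)+g$ lie on $\partial\PP$ and $\partial\KK$ respectively, and $\psi(b) = \frac{\beta(b)}{\alpha(b)}(b-g)+g$ is defined so that it occupies the same ``fraction $1/\alpha(b)$'' from $g$, now measured along the $\KK$-segment.

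First I would verify that $\psi$ is well defined, i.e.\ that $\psi(b)\in\KK$ for every $b\in\PP$. Since $b$ itself lies in $\PP$, we have $\alpha(b)\geq 1$, so $1/\alpha(b)\in(0,1]$ and
\begin{equation*}
\psi(b) = \tfrac{1}{\alpha(b)}\bigl(\beta(b)(b-g)+g\bigr) + \bigl(1-\tfrac{1}{\alpha(b)}\bigr)g
\end{equation*}
is a convex combination of $\beta(b)(b-g)+g \in \KK$ and $g \in \KK$, hence lies in $\KK$ by convexity. The explicit formula for $\psi$ stated in the proposition then follows by substituting the closed forms from Lemmas \ref{alfa} and \ref{beta}.

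Next I would establish bijectivity by directly verifying $\psi^{-1}\circ\psi=\id_{\PP}$, with $\psi^{-1}$ as in the statement. The key observation is that if $b':=\psi(b)$ for $b\neq g$, then $b'-g = \tfrac{\beta(b)}{\alpha(b)}(b-g)$ is a \emph{positive} scalar multiple of $b-g$; consequently the indices realizing $\bigvee_i b_i$ and $\bigwedge_i b_i$ coincide with those realizing $\bigvee_i b'_i$ and $\bigwedge_i b'_i$. A short calculation then gives
\begin{equation*}
\bigvee_i b'_i - \bigwedge_i b'_i = \frac{\beta(b)}{\alpha(b)^{2}},\qquad 1 - n\bigwedge_i b'_i = \frac{1}{\alpha(b)},
\end{equation*}
so the coefficient appearing in $\psi^{-1}$ reduces to $\alpha(b)/\beta(b)$, whence $\psi^{-1}(\psi(b)) = \tfrac{\alpha(b)}{\beta(b)}\cdot\tfrac{\beta(b)}{\alpha(b)}(b-g)+g = b$. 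The identity $\psi\circ\psi^{-1}=\id_{\KK}$ follows by the symmetric calculation; here the well-definedness of $\psi^{-1}$ on $\KK$ rests on $\beta(b')\geq 1$ for $b'\in\KK$, which presents $\psi^{-1}(b')$ as a convex combination of $\alpha(b')(b'-g)+g\in\partial\PP\subset\PP$ and $g\in\PP$. The cases $b=g$ and $b'=g$ are immediate from the piecewise definition.

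The main conceptual (rather than computational) point to watch is that Lemmas \ref{alfa} and \ref{beta} apply to points of $\KK$ as well as to points of $\PP$; this is legitimate because $\KK=\HH\cap\Pi\subseteq\PP$, since any $b'\in\KK$ is fixed by $\Psi_0$. Once this is noted, the entire proof is bookkeeping with the max/min indices, and this bookkeeping succeeds precisely because the scaling factor $\beta(b)/\alpha(b)$ is positive, so max and min positions are preserved.
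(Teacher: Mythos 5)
Your proposal is correct, but it reaches bijectivity by a different route than the paper. The paper, after the same well-definedness argument (convexity of $\KK$ plus $\alpha(b)\geq 1$ and the closed forms from Lemmas \ref{alfa} and \ref{beta}), proves injectivity by a direct coordinate computation: assuming WLOG $b_1\geq\cdots\geq b_n$, it derives the componentwise relations \eqref{bc}--\eqref{bccc}, first ruling out $\psi(b)=g$ for $b\neq g$ and then forcing $b=c$; surjectivity, i.e.\ the verification that the displayed formula for $\psi^{-1}$ really is an inverse, is explicitly left as an exercise. You instead verify the two-sided inverse identities $\psi^{-1}\circ\psi=\id_{\PP}$ and $\psi\circ\psi^{-1}=\id_{\KK}$ directly, the key point being that $\psi(b)-g$ is a positive scalar multiple of $b-g$, so the indices realizing $\bigvee_i b_i$ and $\bigwedge_i b_i$ are preserved and the quantities $\bigvee_i b_i-\bigwedge_i b_i$ and $1-n\bigwedge_i b_i$ simply rescale by $\beta(b)/\alpha(b)$; your computed values $\beta(b)/\alpha(b)^2$ and $1/\alpha(b)$ are correct, and the coefficient in $\psi^{-1}$ collapses to $\alpha(b)/\beta(b)$ as you say. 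Two points you rightly flag, and which are needed to make this airtight, check out: $\KK\subseteq\PP$ because every $b'\in\KK$ is fixed by $\Psi_0$, so Lemmas \ref{alfa} and \ref{beta} do apply to points of $\KK\setminus\{g\}$, and $\beta(b')\geq 1$ on $\KK\setminus\{g\}$ (since $0<1-n\bigwedge_i b'_i\leq 1$), so $\psi^{-1}(b')$ is a convex combination of $\alpha(b')(b'-g)+g\in\partial\PP$ and $g$, hence lies in $\PP$. What your approach buys is a single uniform computation that handles injectivity and surjectivity at once and, in particular, supplies the verification the paper omits; what the paper's approach buys is an argument that never needs to reason about which index attains the max or min, only about the ordered coordinates after a WLOG normalization.
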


\begin{proof}
First, note that $\psi$ is well defined. Indeed, if $b\in\PP\setminus\{g\}$ then $\beta(b)(b-g)+g\in \KK$. 
Since $\alpha(b)\geq 1$ and $\KK$ is
convex, it follows that $\psi(b) \in \KK$, as required.
The last expression of $\psi(b)$ follows from Lemma \ref{alfa} and Lemma \ref{beta}.

Let $b\in \PP\setminus\{g\}$ such that $\psi(b)=\psi(g)=g$. Note that, since $b\neq g$, it follows that $\bigvee_{i\in[n]}b_i>\frac{1}{n}$ and $\bigwedge_{i\in[n]}b_i<\frac{1}{n}$.
Without any loss of generality, we can assume that $b_1\geq b_2\geq \cdots \geq b_n$, hence $b_1=\bigvee_{i\in[n]}b_i > \bigwedge_{i\in[n]}b_i=b_n$. Moreover, we have that
\begin{equation}\label{biep}
 b':=\psi(b) = \frac{b_1-b_n}{1-nb_n}(b-g)+g.
\end{equation}
Since $b_1>b_n$, from \eqref{biep} it follows that $b'_1>b'_n$ and thus $b'=\psi(b) \neq g=\psi(g)$.

Now, let $c\in \PP\setminus\{g\}$ such that $\psi(b)=\psi(c)$. It follows that
\begin{equation}\label{bc}
\frac{\bigvee_{i\in[n]}b_i - \bigwedge_{i\in[n]}b_i}{1-n\bigwedge_{i\in[n]}b_i}(b-g) = \frac{\bigvee_{i\in[n]}c_i - \bigwedge_{i\in[n]}c_i}{1-n\bigwedge_{i\in[n]}c_i}(c-g).
\end{equation}
Since $b_1\geq b_2\geq \ldots b_n$, from \eqref{bc} it follows that $c_1\geq c_2\geq \cdots \geq c_n$, hence $c_1=\bigvee_{i\in[n]}c_i > \bigwedge_{i\in[n]}c_i=c_n$.
From \eqref{bc} we get
\begin{equation}\label{bcc}
 \frac{b_1 - b_n}{1-nb_n}(1-nb_i) = \frac{c_1 - c_n}{1-nc_n}(1-nc_i)\text{ for all }i\in[n].
\end{equation}
Taking $i=n$ in \eqref{bcc}, we get $b_1-b_n=c_1-c_n$ and therefore, from \eqref{bcc} we get
\begin{equation}\label{bccc}
 \frac{1 - nb_i}{1-nb_n} = \frac{1 - nc_i}{1-nc_n} \text{ for all }i\in[n-1].
\end{equation}
If $b_n=c_n$, then, from \eqref{bccc} it follows that $b=c$. Assume $b_n\neq c_n$. Since $c_1-b_1=c_n-b_n$, from \eqref{bccc} we get
$$ \frac{1 - nb_1}{1-nb_n} = \frac{1 - nc_1}{1-nc_n} = \frac{c_1-b_1}{c_n-b_n}=1, $$
thus $b_1=b_n$ which implies $b_1=\cdots=b_n=\frac{1}{n}$, hence $b=g$, a contradiction!
From all the above, it follows that the map $\psi$ is injective. 

In order to check that $\psi$ is surjective, it is enough to show
that for $b'\in \KK\setminus\{g\}$, if we set 
$$b:=\frac{1-n\bigwedge_{i\in[n]}b'_i}{\bigvee_{i\in[n]}b'_i - \bigwedge_{i\in[n]}b'_i}(b'-g)+g,$$
it follows that $b\in\PP\setminus\{g\}$ and $\psi(b)=b'$. We leave this as an exercise!
\end{proof}

\begin{prop}\label{c222}
The composite map 
$\Psi_1:\FF \to \KK$, $\Psi_1:=\psi\circ\Psi_0|_{\FF}$, is bijective. 
Moreover, for $a\in\FF$, we have\
$$\Psi_1(a)= \begin{cases} \frac{1-\bigwedge_{i\in[n]}a_i}{\sum_{i\in[n]}a_i - n\bigwedge_{i\in[n]}a_i}
\left(a - (\sum_{i \in [n]} a_i)g \right) + g,& a\neq g \\ g,& a=g \end{cases}.$$
Moreover, the inverse of $\Psi_1$ is $\Psi_1^{-1}:\KK \to \FF$, $\Psi_1^{-1}=\Psi_0|_{\FF}^{-1}\circ\psi^{-1}$, where
$$\Psi_1^{-1}(b)= \begin{cases} \frac{1-n\bigwedge_{i\in[n]}b_i}{\bigvee_{i\in[n]}b_i - \bigwedge_{i\in[n]}b_i }(b-g)
+ \left( 1- \frac{1-n\bigwedge_{i\in[n]}b_i}{\bigvee_{i\in[n]}b_i}(\bigvee_{i\in[n]}b_i -\frac{1}{n} ) \right)u
,& b\neq g \\ g,& b=g \end{cases}.$$
\end{prop}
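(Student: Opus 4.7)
The plan is to derive both the bijectivity and the two closed formulas by directly composing the ingredients already established in Propositions~\ref{p3} and~\ref{p5}, treating the latter as black boxes.

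Bijectivity is automatic: since $\Psi_0|_{\FF}\colon\FF\to\PP$ is a bijection by Proposition~\ref{p3} and $\psi\colon\PP\to\KK$ is a bijection by Proposition~\ref{p5}, the composite $\Psi_1=\psi\circ\Psi_0|_{\FF}$ is bijective with inverse $\Psi_0|_{\FF}^{-1}\circ\psi^{-1}$. For the forward formula I would fix $a\in\FF$ (so $\bigvee_i a_i=1$) and set $b:=\Psi_0(a)=a+\tfrac{1-\sum_i a_i}{n}\uu$. The key observation is that adding a scalar multiple of $\uu$ shifts every coordinate by the same amount, which immediately yields the three identities
$$\bigvee_i b_i-\bigwedge_i b_i=1-\bigwedge_i a_i,\quad 1-n\bigwedge_i b_i=\textstyle\sum_i a_i-n\bigwedge_i a_i,\quad b-g=a-(\sum_i a_i)g.$$
Plugging these into the explicit form of $\psi$ from Proposition~\ref{p5} reproduces the asserted closed form for $\Psi_1(a)$. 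The branch $a=g$ in the piecewise statement is vacuous: for $n\ge 2$ no element of $\FF$ has all coordinates equal to $\tfrac1n$.

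For the formula of $\Psi_1^{-1}$, I would fix $b\in\KK\setminus\{g\}$, set $\lambda:=\tfrac{1-n\bigwedge_i b_i}{\bigvee_i b_i-\bigwedge_i b_i}$, and $c:=\psi^{-1}(b)=\lambda(b-g)+g$. Then $c_i=\lambda(b_i-\tfrac{1}{n})+\tfrac{1}{n}$, so $\bigvee_i c_i=\lambda(\bigvee_i b_i-\tfrac{1}{n})+\tfrac{1}{n}$. Applying $\Psi_0|_{\FF}^{-1}(c)=c+(1-\bigvee_i c_i)\uu$ from Proposition~\ref{p3}, writing $g=\tfrac{1}{n}\uu$, and collecting the coefficient of $\uu$ produces the stated expression for $\Psi_1^{-1}(b)$ after a short simplification.

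The main obstacle is purely algebraic bookkeeping: tracking how $\bigvee$ and $\bigwedge$ commute with the translation by $\tfrac{1-\sum_i a_i}{n}\uu$ in the forward direction, and correctly grouping the $\uu$ contributions from both $g$ and $(1-\bigvee_i c_i)\uu$ when unfolding the composition in the reverse direction. The degenerate point $b=g\in\KK$ is dispatched by the convention $\psi^{-1}(g)=g$ from Proposition~\ref{p5} together with the explicit value of $\Psi_0|_{\FF}^{-1}$ at $g$.
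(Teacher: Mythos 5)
Your route is exactly the paper's route: the published proof also just composes Proposition \ref{p3} with Proposition \ref{p5} and dismisses the formulas as ``straightforward computations'', and the computations you supply (the three translation identities $\bigvee_i b_i-\bigwedge_i b_i=1-\bigwedge_i a_i$, $1-n\bigwedge_i b_i=\sum_i a_i-n\bigwedge_i a_i$, $b-g=a-(\sum_i a_i)g$ for $b=\Psi_0(a)$, and the unwinding of $\Psi_0|_{\FF}^{-1}\circ\psi^{-1}$) are correct and are precisely what the paper leaves implicit.

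Two concrete caveats, both at points the paper itself glosses over. First, the genuine degenerate input of the forward formula is not $a=g$ (which, as you note, never lies in $\FF$) but $a=\uu$: there $\Psi_0(\uu)=g$, so the \emph{second} branch of $\psi$ applies and the displayed closed form degenerates to $\tfrac{0}{0}\cdot 0+g$; you should state separately that $\Psi_1(\uu)=\psi(g)=g$ rather than claim the first-branch formula covers all of $\FF$. Second, your inverse computation actually yields
$$\Psi_1^{-1}(b)=\lambda(b-g)+\Bigl(1-\lambda\bigl(\bigvee_{i\in[n]}b_i-\tfrac{1}{n}\bigr)\Bigr)\uu,\qquad \lambda=\frac{1-n\bigwedge_{i\in[n]}b_i}{\bigvee_{i\in[n]}b_i-\bigwedge_{i\in[n]}b_i},$$
which does \emph{not} simplify to the printed expression: the printed second fraction has denominator $\bigvee_i b_i$ instead of $\bigvee_i b_i-\bigwedge_i b_i$ (for $n=2$, $b=(0.8,0.2)$ your formula gives $(1,0.4)\in\FF$, the printed one gives $(1.075,0.475)\notin\FF$), and likewise $\Psi_1^{-1}(g)=\Psi_0|_{\FF}^{-1}(g)=\uu$, not $g$ (indeed $g\notin\FF$). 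So your derivation is right, but the assertion that it ``produces the stated expression after a short simplification'' would fail if carried out literally; the honest conclusion is that the statement's inverse formula and its $b=g$ branch contain typos which your computation corrects.
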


\begin{proof}
From Proposition \ref{p3} and Proposition \ref{p5}, it follows that $\Psi_1$ is bijective.
Let $a\in\FF$. Then by \eqref{psi0}, we have $\Psi_1(a)=\psi(a+(1-\sum_{i\in [n]} a_i)g)$. The required formula follows 
Proposition \ref{p5} by straightforward computations. Similarly, the formula for $\Psi^{-1}(b)$ follows
 from Proposition \ref{p3} and Proposition \ref{p5}.
\end{proof}

\begin{dfn}
If $(X,(A_i)_{i\in[n]})$ is a covering, then we let $\mathbf{F}_3((X,(A_i)_{i\in[n]})):=(X,(B_i)_{i\in[n]})$, where:
$$(B_1(x),\ldots,B_n(x)):=\Psi_1(A_1(x),\ldots,A_n(x)),\text{ for all }x\in X.$$
If $(X,(B_i)_{i\in[n]})$ is a partition, then we let $\mathbf{G}_3((X,(B_i)_{i\in[n]})):=(X,(A_i)_{i\in[n]})$, where:
$$(A_1(x),\ldots,A_n(x)):=\Psi_1^{-1}(B_1(x),\ldots,B_n(x)),\text{ for all }x\in X.$$
Also, we define $\mathbf{F}_3((X,X))=(X,X)$ and $\mathbf{G}_3((X,X))=(X,X)$.
\end{dfn}

\begin{teor}\label{t222}
With the above notations, the maps $\mathbf{F}_3:\Ob(\fCovering)\to \Ob(\Partition)$ and 
$\mathbf{G}_3:\Ob(\Partition)\to \Ob(\fCovering)$ are bijective and inverse to each other.
\end{teor}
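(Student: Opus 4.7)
The plan is to deduce the statement directly from the pointwise bijection $\Psi_1:\FF\to\KK$ established in Proposition \ref{c222}. My first observation is that giving a covering $(X,(A_i)_{i\in[n]})$ with exactly $n$ fuzzy sets is the same as giving a map $\tilde{A}:X\to\FF$ via $\tilde{A}(x)=(A_1(x),\ldots,A_n(x))$: the covering condition $\bigvee_{i\in[n]} A_i(x)=1$ for all $x$ is precisely the condition that $\tilde{A}(x)\in\FF$. Analogously, a partition $(X,(B_i)_{i\in[n]})$ is the same data as a map $\tilde{B}:X\to\KK$, since $\sum_{i\in[n]} B_i(x)=1$ means $\tilde{B}(x)\in\KK$. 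Under these identifications, $\mathbf{F}_3$ is postcomposition with $\Psi_1$ and $\mathbf{G}_3$ is postcomposition with $\Psi_1^{-1}$.

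Next, I would decompose the object classes according to the base set and the size of the index set,
$$\Ob(\fCovering)=\bigsqcup_{X,\,n\geq 1}\Ob_{X,n}(\fCovering),\qquad \Ob(\Partition)=\bigsqcup_{X,\,n\geq 1}\Ob_{X,n}(\Partition),$$
and note that both $\mathbf{F}_3$ and $\mathbf{G}_3$ preserve this decomposition (they leave $X$ and $[n]$ untouched, modifying only the membership values). For fixed $X$ and $n\geq 2$, the restricted maps become the postcomposition maps between the function sets $\FF^X$ and $\KK^X$; since Proposition \ref{c222} yields $\Psi_1\circ\Psi_1^{-1}=\Id_{\KK}$ and $\Psi_1^{-1}\circ\Psi_1=\Id_{\FF}$, postcomposition gives mutually inverse bijections $\FF^X\leftrightarrow\KK^X$, which is exactly the required bijection $\Ob_{X,n}(\fCovering)\leftrightarrow\Ob_{X,n}(\Partition)$.

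The edge case $n=1$ is handled separately: here $\FF$ and $\KK$ both degenerate to the singleton $\{(1)\}$, so the only $1$-set covering (resp.\ partition) on $X$ is the constant family $A_1\equiv 1$ (resp.\ $B_1\equiv 1$). The stipulation $\mathbf{F}_3((X,X))=\mathbf{G}_3((X,X))=(X,X)$ declares this trivial bijection explicitly.

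There is no substantive obstacle left: all of the difficult work is already packaged in Propositions \ref{p5} and \ref{c222}, where the bijection $\Psi_1$ and its inverse were constructed via the convex-geometric maps $\psi$ and $\Psi_0|_{\FF}$. The only point that deserves explicit mention is the translation between ``$n$-indexed families of fuzzy sets'' and ``maps into $\FF$ or $\KK$''; once that identification is in place, the object-level bijection is an immediate consequence of the pointwise one, and no further verification (in particular, no morphism-level check, which is explicitly disclaimed in the surrounding remarks) is required.
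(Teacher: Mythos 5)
Your proposal is correct and follows essentially the same route as the paper, which simply deduces the theorem from the pointwise bijection $\Psi_1$ of Proposition \ref{c222} in the manner of Corollary \ref{c2}. Your write-up merely makes explicit the identification of $n$-set coverings and partitions with maps into $\FF$ and $\KK$ and the trivial $n=1$ case, which the paper leaves implicit.
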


\begin{proof}
 The proof is similar to the proof of Theorem \ref{c22}, using Proposition \ref{c222}.
\end{proof}

\section{Case $n=2$}

In this section we detail the results obtained in the previous sections in the case $n=3$.

We have $\HH:=[0,1]^2$, $\FF=(\{1\}\times [0,1]) \cup ([0,1]\times\{1\})$ and
$\KK=\{x\in \HH\;:\;x_1+x_2=1\}$. Note that $\KK$ is the diagonal of the square $\HH$.

The map $\Phi:\KK\to\FF$ from Proposition \ref{p1} is given by
$$\Phi(b_1,b_2)=(b_1+1-\max\{b_1,b_2\},b_2+1-\max\{b_1,b_2\}).$$
Note that $\Phi$ is a bijection and its inverse, $\Phi^{-1}:\FF\to\KK$, is given by
$$\Phi^{-1}(a_1,a_2)=\frac{1}{2}(a_1-a_2+1,a_2-a_1+1).$$
\begin{center}
\includegraphics[width=0.3\textwidth]{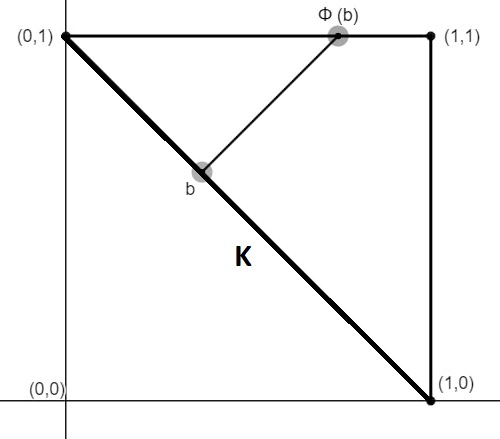}

\small{Figure 1: Representation of the bijection $\Phi$}
\end{center}
According to Theorem \ref{t1}, there is
an isomorphism of categories $\mathbf F:\Partition[2]\to\Covering[2]$, defined
\begin{itemize}
\item On objects: If $(X,(B_1,B_2))$ is a partition of $X$, then $\mathbf F((X,(B_1,B_2))):=(X,(A_1,A_2))$, 
      where $A_i(x)=B_i(x)+1-\max\{B_1(x),B_2(x)\}$, for $i=1,2$.
\item On morphisms: $\mathbf F((f,\rho))=(f,\rho)$.
\end{itemize}
Its inverse is $\mathbf G:\Covering[2]\to\Partition[2]$, defined
\begin{itemize}
\item On objects: If $(X,(A_1,A_2))$ is a covering of $X$, then $\mathbf G((X,(A_1,A_2))):=(X,(B_1,B_2))$, 
      where $B_1(x)=\frac{1}{2}(A_1(x)-A_2(x)+1)$ and $B_2(x)=\frac{1}{2}(A_2(x)-A_1(x)+1)$.
\item On morphisms: $\mathbf G((f,\rho))=(f,\rho)$.
\end{itemize}

Note that $\Phi=\Phi_1=\Phi_2$, where $\Phi_1$ was defined in Proposition \ref{p22} and $\Phi_2$ in Proposition \ref{p2}. 
Also, $\Psi=\Phi^{-1}$, where $\Psi$ is the map defined in Proposition \ref{p4}. Hence, Theorem \ref{t2} gives nothing new
in this case. Finally, the map $\psi$ defined in Proposition \ref{p5} is the identity map.

\section{Case $n=3$}

In this section we detail the results obtained in the previous sections in the case $n=3$.

We have $\HH:=[0,1]^3$, $\FF=(\{1\}\times [0,1]^2) \cup ([0,1]\times\{1\}\times[0,1])\cup ([0,1]^2\times\{1\})$ and
$\KK=\{x\in \HH\;:\;x_1+x_2+x_3=1\}$. Note that $\KK$ is an (equilateral) triangle with the vertices 
$(1,0,0)$, $(0,1,0)$ and $(0,0,1)$, i.e. $\KK$ is the convex hull of $(1,0,0)$, $(0,1,0)$ and $(0,0,1)$.
\begin{center}
\includegraphics[width=0.4\textwidth]{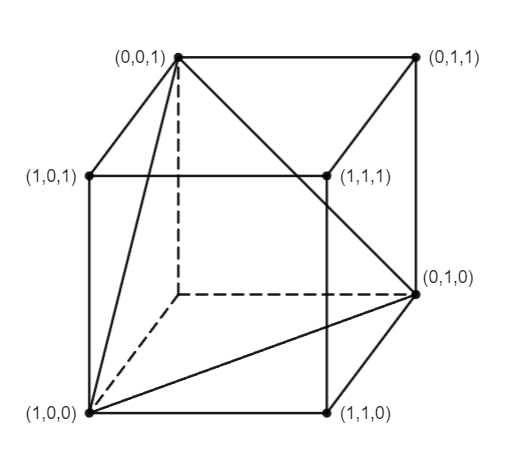}

\small{Figure 2: Representation of $\HH$ and $\KK$}
\end{center}
The map $\Phi:\KK\to\FF$ from Proposition \ref{p1} is given by
$$\Phi(b_1,b_2,b_3)=(b_1+1-\max\{b_1,b_2,b_3\},b_2+1-\max\{b_1,b_2,b_3\},b_3+1-\max\{b_1,b_2,b_3\}).$$
The image of $\Phi$ is 
\begin{equation}\label{greeny}
\GG=\{(a_1,a_2,a_3)\in\FF\;:2a_1-a_2-a_3+1\geq 0,\;2a_2-a_1-a_3+1\geq 0,\;2a_3-a_2-a_3+1\geq 0 \;\}.
\end{equation}
If we denote $\GG_i=\GG\cap \{x_i=1\}$ for $i\in [3]$, then it is easy to see that 
$$\GG_1=\{(1,a_2,a_3)\;:\;a_2 \leq 2a_3,\;a_3\leq 2a_2 \}.$$
\begin{center}
\includegraphics[width=0.35\textwidth]{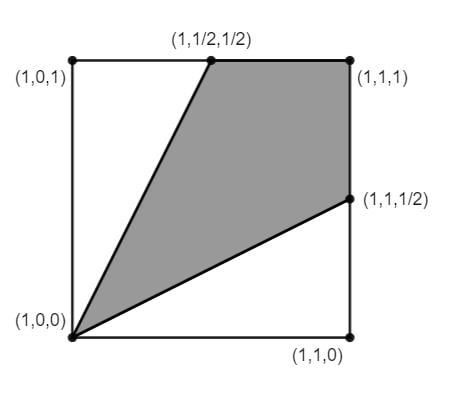}

\small{Figure 3: Representation of $\GG_1$}
\end{center}
Similar formulas can be provided for $\GG_2$ and $\GG_3$. Also, it is clear that $\GG=\GG_1\cup \GG_2\cup \GG_3$.

Similarly to \eqref{greeny}, a "good" covering of a set $X$ with three fuzzy sets is a pair $(X,(A_i)_{i\in [3]})$ such that,
for all $x\in X$, we have \small
$$ 2A_1(x)-A_2(x)-A_3(x)+1\geq 0,\;2A_2(x)-A_1(x)-A_3(x)+1\geq 0,\;2A_3(x)-A_1(x)-A_2(x)+1\geq 0.$$ \normalsize
The isomorphism of categories $\mathbf F:\Partition\to\gCovering$ from Theorem \ref{t1} is given
on objects by $\mathbf F((X,(B_i)_{i\in [3]}))=(X,(A_i)_{i\in [3]})$, where
$$A_i(x)=B_i(x)+1-\max\{B_1(x),B_2(x),B_3(x)\}\text{ for all }i\in[3],x\in X.$$
Also, $\mathbf G:\gCovering \to \Partition$ is given on objects by 
$\mathbf G((X,(A_i)_{i\in [3]}))=(X,(B_i)_{i\in [3]})$, where
\begin{equation}\label{phiminus}
B_i(x)=A_i(x)-\frac{1}{3}(A_1(x)+A_2(x)+A_3(x))+\frac{1}{3} \text{ for all }i\in[3],x\in X.
\end{equation}
The bijection given in Proposition \ref{p22} is $\Phi_1:\FF\to\GG$, given by
$$ \Phi_1(c_1,c_2,c_3)=\left(\frac{2c_1+c_2+c_3-1}{c_1+c_2+c_3},\frac{c_1+2c_2+c_3-1}{c_1+c_2+c_3},
\frac{c_1+c_2+2c_3-1}{c_1+c_2+c_3} \right).$$
Its inverse is $\Phi^{-1}:\GG\to\FF$, where
$$\Phi^{-1}(a_1,a_2,a_3)=\left(\frac{2a_1-a_2-a_3+1}{4-a_1-a_2-a_3},\frac{-a_1+2a_2-a_3+1}{4-a_1-a_2-a_3},
\frac{-a_1-a_2+2a_3+1}{4-a_1-a_2-a_3}, \right).$$
According to Corollary \ref{c2}, $\Phi_1$, induced a bijection between the coverings with 3 sets and the good coverings with 3 sets,
More precisely, if $(X,(C_i)_{i\in[3]})$ is a covering of $X$, then $(X,(A_i)_{i\in [3]})$ is a good covering of $X$,
where $(A_1(x),A_2(x),A_3(x)):=\Phi_1(C_1(x),C_2(x),C_3(x))$ for all $x\in X$. Obviously, we have
$(C_1(x),C_2(x),C_3(x))=\Phi_1^{-1}(A_1(x),A_2(x),A_3(x))$ for all $x\in X$.

Now, let $e\in S_3$ be the identical permutation. Then 
$$\FF_e:=\{(c_1,c_2,c_3)\in\FF\;:\;1=c_1\geq c_2\geq c_3\}\text{ and }\GG_e:=\{(a_1,a_2,a_3)\in\FF_e\;:\;2a_3\geq a_2\}.$$
The map $\Phi_{2,e}:\GG_e\to\FF_e$ given in Lemma \ref{fis} has the expression
$$\Phi_{2,e}(a_1,a_2,a_3)=(a_1,a_2,2a_3-a_2),\text{ for all }(a_1,a_2,a_3)\in\GG_e.$$
Its inverse is $\Phi_{2,e}^{-1}:\FF_e\to \GG_3$ and has the expression
$$\Phi_{2,e}^{-1}(c_1,c_2,c_3)=\left( c_1,c_2,\frac{1}{2}c_2+\frac{1}{2}c_3 \right),\text{ for all }(c_1,c_2,c_3)\in\FF_e.$$
For $\sigma\in S_3$, $\FF_{\sigma}$, $\GG_{\sigma}$ and $\Phi_{2,\sigma}$ are similarly defined and constructed. For instance,
if $\sigma=(231)$, then 
\begin{align*}
& \FF_{\sigma}=\{(c_1,c_2,c_3)\in\FF\;:\;c_2\geq c_3\geq c_1\},\;\GG_{\sigma}=\{(a_1,a_2,a_3)\in\FF_{\sigma}\;:\;2a_1\geq a_3\} \\
& \Phi_{2,\sigma}(a_1,a_2,a_3) = (2a_1-a_3,a_2,a_3),\;\Phi_{2,\sigma}^{-1}(c_1,c_2,c_3)=\left(\frac{1}{2}c_3+\frac{1}{2}c_1,c_2,c_3\right).
\end{align*}
Setting $\Phi_2(a):=\Phi_{2,\sigma}(a)$ for $a\in \GG_{\sigma}$, according to Proposition \ref{p2} and Proposition \ref{c22}, we obtain
a bijection $\Phi_2:\GG\to\FF$ which induced a bijection between the good coverings with 3 sets and the coverings with 3 sets.

More precisely, if $(X,(A_i)_{i\in[3]})$ is a good covering of $X$, then $(X,(C_i)_{i\in [3]})$ is a covering of $X$,
where $(C_1(x),C_2(x),C_3(x)):=\Phi_2(A_1(x),A_2(x),A_3(x))$ for all $x\in X$.
Obviously, we have $(A_1(x),A_2(x),A_3(x))=\Phi_2^{-1}(C_1(x),C_2(x),C_3(x))$ for all $x\in X$.

The map $\Psi_0:\HH\to\Pi$, given in \eqref{psi0}, is defined by
$$\Psi_0(a_1,a_2,a_3)=\left( \frac{2a_1-a_2-a_3+1}{3},\frac{-a_1+2a_2-a_3+1}{3},\frac{-a_1-a_2+2a_3+1}{3}, \right).$$
Let $\PP:=\Imm(\Psi_0)$ and note that $\PP=\Psi_0(\HH)$. Then $\PP$ is a hexagon contained in the plane $\Pi:x_1+x_2+x_3=1$,
with the vertices 
$$ (1,0,0), \left(\frac{2}{3},\frac{2}{3},-\frac{1}{3} \right),   (0,1,0), \left(\frac{2}{3},-\frac{1}{3},\frac{2}{3} \right),   (0,0,1),
 \left(-\frac{1}{3},\frac{2}{3},\frac{2}{3} \right).$$
According to Proposition \ref{p3}, $\Psi_0:\HH\to\PP$ is bijective and
$$\Psi_0^{-1}(b_1,b_2,b_3)=(b_1+1-\max\{b_1,b_2,b_3\} ,b_2+1-\max\{b_1,b_2,b_3\},b_3+1-\max\{b_1,b_2,b_3\})$$
In Proposition \ref{p4}, the map $\Psi:\HH\to\KK$ was introduced and it has the expression
$$\Psi(a_1,a_2,a_3)=\left( \frac{2a_1-a_2-a_3+2}{6}, \frac{-a_1+2a_2-a_3+2}{6}, \frac{-a_1-a_2+2a_3+2}{6}, \right).$$
The image of $\Psi$ is $\overline{\PP}$, which is a hexagon with the vertices
$$ \left( \frac{2}{3}, \frac{1}{6}, \frac{1}{6} \right), \left( \frac{1}{2}, \frac{1}{2}, 0 \right),
 \left( \frac{1}{6}, \frac{2}{3}, \frac{1}{6} \right), \left(  \frac{1}{2}, 0, \frac{1}{2} \right), 
\left( \frac{1}{6}, \frac{1}{6}, \frac{2}{3} \right), \left( 0, \frac{1}{2}, \frac{1}{2} \right). $$
\begin{center}
\includegraphics[width=0.5\textwidth]{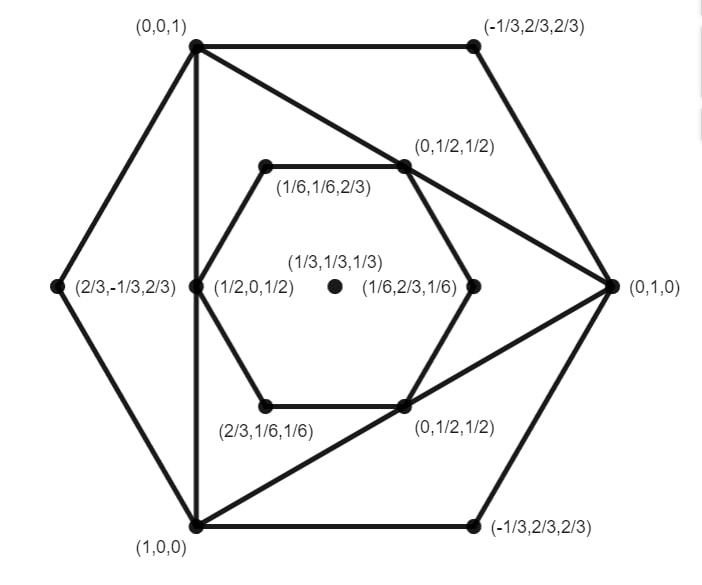}

\small{Figure 4: Representation of $\PP$, $\KK$ and $\overline{\PP}$}
\end{center}
Note that $\overline{\PP}$ and $\PP$ have the same center at $(\frac{1}{3},\frac{1}{3},\frac{1}{3})$.
$\KK$ is contained in $\PP$ and tangent to the border of $\PP$ in $(1,0,0)$, $(0,1,0)$ and $(0,0,1)$.
Also, $\overline{\PP}$ is contained in $\KK$ and tangent to the border of $\KK$ in $\left( \frac{1}{2}, \frac{1}{2}, 0 \right)$,
$\left( \frac{1}{2}, 0, \frac{1}{2} \right)$ and $\left( 0,\frac{1}{2}, \frac{1}{2} \right)$.

The inverse of $\Psi$ is $\Psi^{-1}:\PP\to \HH$, has the expression:
$$\Psi^{-1}(b_1,b_2,b_3)=( 2b_1-2\max\{b_1,b_2,b_3\}+1, 2b_2-2\max\{b_1,b_2,b_3\}+1, 2b_3-2\max\{b_1,b_2,b_3\}+1  ) .$$
By Definition \ref{covn}, we consider $\Covering[3]$, the category of coverings with $3$ sets, $\Covering[3]_{\geq\frac{1}{2}}$, 
the full subcategory of $\Covering[3]$ whose objects $(X,(A_i)_{i\in[3]})$ satisfy the condition $A_i(x)\geq\frac{1}{2}$ for 
all $i\in[3]$ and $x\in X$, $\Partition[3]$, the category of partitions with $3$ sets and $\Partition[3]_{\overline{\PP}}$

According to Proposition \ref{c_eps}, we have the isomorphism of categories 
$C_{\frac{1}{2}}:\Covering[3]\to\Covering[3]_{\geq \frac{1}{2}}$, defined on objects by 
      $C_{\frac{1}{2}}((X,(A_i)_{i\in[3]}) = (X, (B_i)_{i\in[3]})$,
			$$(B_1(x),B_2(x),B_3(x)):=\left(\frac{1}{2}A_1(x)+\frac{1}{2},\frac{1}{2}A_2(x)+\frac{1}{2},\frac{1}{2}A_3(x)+\frac{1}{2}\right)
			  \text{ for all }x\in X.$$
According to Theorem \ref{t2}, we have the isomorphism of categories
     $\mathbf{F[3]}:\Covering[3]\to\Partition[3]_{\overline{\PP}}$, defined on objects by 
      $\mathbf{F[3]}((X,(A_i)_{i\in[3]})=(X, (B_i)_{i\in[3]})$, where 
			\begin{align*}
			& (B_1(x),B_2(x),B_3(x)):=\left( \frac{2A_1(x)-A_2(x)-A_3(x)+2}{6}, \frac{-A_1(x)+2A_2(x)-A_3(x)+2}{6}, \right.\\
			& \left. \frac{-A_1(x)-A_2(x)+2A_3(x)+2}{6} \right), \text{ for all }x\in X.
			\end{align*}
			\normalsize

According to Proposition \ref{c222}, we have the bijection $\Psi_1:\FF\to\KK$, defined by
$$\Psi_1(a)= \begin{cases} \frac{1- \min\{a_1,a_2,a_3\}}{a_1+a_2+a_3 - 3 \min\{a_1,a_2,a_3\}}
\left( \frac{2a_1-a_2-a_3}{3}, \frac{-a_1+2a_2-a_3}{3}, \frac{-a_1-a_2+2a_3}{3}\right) + g,& a\neq g \\ g,& a=g \end{cases}.$$
Also, the inverse of $\Psi_1$ is $\Psi_1^{-1}:\KK \to \FF$, where
$$\Psi_1^{-1}(b)= \begin{cases} \frac{1-3\min\{b_1,b_2,b_3\}}{\max\{b_1,b_2,b_3\} - \min\{b_1,b_2,b_3\} }(b-g)
+ \left( 1- \frac{1-3\min\{b_1,b_2,b_3\}}{\max\{b_1,b_2,b_3\}}(\max\{b_1,b_2,b_3\} -\frac{1}{3} ) \right)u
,& b\neq g \\ g,& b=g \end{cases}.$$
where $g=\frac{1}{3}\uu=(\frac{1}{3},\frac{1}{3},\frac{1}{3})$. According to Theorem \ref{t222}, $\Psi_1$ induces
a bijection between the coverings and the partitions of $X$ with $3$ set:

If $(X,(A_1,A_2,A_3))$ is a covering of $X$, then 
$\Psi_1((X,(A_1,A_2,A_3))):=(X,(B_1,B_2,B_3))$, where $(B_1(x),B_2(x),B_3(x))=\Psi_1(A_1(x),A_2(x),A_3(x))$, for all $x\in X$,
is a partition of $X$. 

Similarly, if $(X,(B_1,B_2,B_3))$ is a partition of $X$, then
$\Psi_1^{-1}((X,(B_1,B_2,B_3))):=(X,(A_1,A_2,A_3))$, where $(A_1(x),A_2(x),A_3(x))=\Psi_1^{-1}(B_1(x),B_2(x),B_3(x))$, for all $x\in X$,
is a partition of $X$.

\section{Conclusions and future work}

The use of category theory represents a recent approach in the study of coverings with fuzzy sets. 
Let $\Covering$ be the category of the category of fuzzy coverings; see \cite{N}. In this article, we introduced
$\Partition$, the category of fuzzy partitions and we studied some of its properties. Also, we established
an isomorphism of categories between $\Partition$ and a full subcategory of $\Covering$,
consisting in coverings with a finite number of fuzzy set which satisfy certain condition;
see Theorem \ref{t1}, and an isomorphism between $\Covering[n]$, the category of coverings with $n$ fuzzy sets, 
and a subcategory of $\Partition$, whose objects are partitions with $n$ sets which satisfies certain conditions;
see Theorem \ref{t2}. We used these isomorphism to deduce several bijections between partitions
and coverings with finitely many sets; see Theorem \ref{kungfu1}, Theorem \ref{c22} and Theorem \ref{t222}.

Future research directions include the study of fuzzy partitions with infinite sets and possible generalizations
of our results if we replace the interval $[0,1]$ with other lattices, for instance with quantales, as it was 
suggested by our referee. However, such generalizations would dilute the geometric flavour of our results, but
they might be useful in other unexpected ways.

\section*{Acknowledgement}

We would like to express out gratitude to the referee for his valuable remarks which helped us to improve the paper.

The first author was partially supported  by a grant of the Ministry of Research, Innovation and Digitization, 
CNCS - UEFISCDI, project number PN-III-P1-1.1-TE-2021-1633, within PNCDI III.

\subsection*{Data availability}

Data sharing not applicable to this article as no data sets were generated or analyzed during the current study.

\subsection*{Conflict of interest}

The authors have no relevant financial or non-financial interests to disclose.


\begin{thebibliography}{99}

\bibitem{baet} B.\ Baets, R.\ Mesiar, \textit{$\mathcal T$-partitions}, Fuzzy Sets and Systems \textbf{97} (1998), 211--223.

\bibitem{deer} L.\ D'eer, C.\ Cornelis, L.\ Godo, \textit{Fuzzy neighborhood operators based on fuzzy coverings}, Fuzzy Sets and Systems \textbf{312} (2017), 
               17--35.
							
\bibitem{deer2} L.\ D'eer, C.\ Cornelis, \textit{A comprehensive study of fuzzy covering-based rough set models: definitions, properties and interrelationships}, 
Fuzzy Sets and Systems \textbf{336} (2018), 1--26.

\bibitem{doob} J.\ L.\ Doob, \textit{Stochastic Processes}, Wiley, 1962.

\bibitem{aaa} A.\ A.\ A.\ Fora, M.\ M.\ M.\ Jaradat, M.\ A.\ H.\ Shakhatreh, W.\ A.\ A.\ Shatanawi, \textit{On fuzzy partitions}, International Journal of Pure and Applied Mathematics
              \textbf{vol 30, no. 4} (2006), 467--474.

\bibitem{MacL} S.\ MacLane, \textit{Categories for the working mathematician}, Graduate Texts in Mathematics, Vol. 5,  Springer-Verlag, New York, 1971. 

\bibitem{hug} D.\ Hug, W.\ Weil, \textit{Lectures on convex geometry}, Springer Nature Switzerland AG, 2020.

\bibitem{N} A.\ Neac\c su, \textit{On the category of fuzzy covering and related topics}, U.P.B. Sci. Bull., Series A, \textbf{Vol. 83, Iss. 2} (2021), 203--214.

\bibitem{N2} A.\ Neac\c su, \textit{On the category of fuzzy tolerance relations and related topics}, U.P.B. Sci. Bull., Series A, \textbf{Vol. 84, Iss. 4} (2022), 109--122.

\bibitem{ne} C.\ V. Negoi\c t\u{a}, D.\ A.\ R\u{a}lescu, \textit{Applications of Fuzzy Sets to Systems Analysis}, Springer Basel AG, 1975.

\bibitem{ng} H.\ T.\ Nguyen , N.\ R.\ Prasad, C.\ L.\ Walker, E.\ A.\ Walker, \textit{A First Course in Fuzzy and Neural Control}, Chapman \& Hall/CRC, 2003.

\bibitem{qz} Qing-Zhao Kong, Zeng-Xin Wei, \textit{Covering-based fuzzy rough sets}, Journal of Intelligent \& Fuzzy Systems \textbf{29} (2015), 2405--2411.

\bibitem{rusp} E.\ H.\ Ruspini, \textit{A new approach to clustering}, Information and Control \textbf{15} (1969), 22--32.

\bibitem{zadeh} L.\ A.\ Zadeh, \textit{Fuzzy sets}, Information and Control \textbf{8} (1965), 338--353.
\end{thebibliography}
\end{document}